\documentclass{amsart}

\newtheorem{theorem}[equation]{Theorem}
\newtheorem{lemma}[equation]{Lemma}
\newtheorem{corollary}[equation]{Corollary}
\newtheorem{proposition}[equation]{Proposition}


\theoremstyle{definition}

\newtheorem*{example*}{Example}
\newtheorem{example}[equation]{Example}
\newtheorem{examples}[equation]{Examples}
\newtheorem{remark}[equation]{Remark}
\newtheorem*{remark*}{Remark}

\newcommand{\bZ}{{\mathbb Z}}

\newcommand{\calL}{{\mathcal L}}

\newcommand{\wotimes}{\widehat{\otimes}}

 \DeclareMathOperator{\End}{End}

\newenvironment{romanenumerate}
 {\begin{enumerate}
 
 }{\end{enumerate}}

\parindent=0pt

\title{The existence of superinvolutions}
\author{A. Elduque\and O. Villa}
\address{Departamento de Matem\'aticas e Instituto Universitario
de Matem\'aticas y Aplicaciones, Universidad de Zaragoza, 50009
Zaragoza, Spain} \email{elduque@unizar.es}
\address{Departamento de Matem\'aticas, Universidad de Zaragoza,
Pza. San Francisco s/n, 50009 Zaragoza, Spain}
\email{oliver.villa@unizar.es}
\thanks{The first author has been supported by the Spanish Ministerio de
Educaci\'{o}n y Ciencia and FEDER (MTM 2004-081159-C04-02) and by the
Diputaci\'on General de Arag\'on (Grupo de Investigaci\'on de
\'Algebra). The second author was funded by an ``Ayuda a la
Investigaci\'on del Vicerrectorado de Investigaci\'on de la
Universidad de Zaragoza'', and wants to thank D.W.~Lewis for his
hospitality in September 2006.}
%

\begin{document}

\begin{abstract}
Superinvolutions on graded associative algebras constitute a source
of Lie and Jordan superalgebras. Graded versions of the classical
Albert and Albert-Riehm Theorems on the existence of
superinvolutions are proven. Surprisingly, the existence of
superinvolutions of the first kind is a rare phenomenon, as
nontrivial central division superalgebras are never endowed with
this kind of superinvolutions.
\end{abstract}

\maketitle

\section{Introduction}

\noindent Albert's Theorem (see \cite{alb,scharl}) asserts that a
finite dimensional central simple algebra has an involution of the
first kind if and only if the order of its class in the Brauer group
is at most $2$, while Albert-Riehm Theorem (see \cite{riehm,scharl})
asserts that a necessary and sufficient condition for the existence
of an involution of the second kind is that the so called
corestriction is trivial in the Brauer group.

Given an associative superalgebra (that is, a graded associative
algebra) $A=A_0\oplus A_1$, a superinvolution is a graded linear map
$\xi:A\rightarrow A$, which is a superantiautomorphism (that is,
$\xi(xy)=(-1)^{xy}\xi(y)\xi(x)$ for any homogeneous elements $x,y\in
A$) and such that $\xi^2$ is the identity map. The set of skew
elements of a superinvolution $\{x\in A:\xi(x)=-x\}$ is a Lie
superalgebra under the graded bracket: $[x,y]=xy-(-1)^{xy}yx$, while
the set of fixed elements $\{x\in A: \xi(x)=x\}$ is a Jordan
superalgebra under the supersymmetrized product: $x\circ
y=xy+(-1)^{xy}yx$. Many of the classical simple Lie and Jordan
superalgebras (see \cite{kac,raczel}) arise in this way.
Superinvolutions on primitive associative superalgebras were studied
by Racine \cite{racine} and used in the classification of the simple
Jordan superalgebras \cite{raczel}.

However, up to our knowledge, no attempt has been made to obtain
necessary and sufficient conditions for a finite dimensional central
simple associative superalgebra to be endowed with a
superinvolution, thus obtaining analogues in the graded setting of
the classical Albert and Albert-Riehm Theorems. This is the purpose
of this paper.

Surprisingly, superinvolutions of the first kind are more difficult
to deal with, and it turns out that the natural analogue of Albert
Theorem is false: even if the class in the Brauer-Wall group of a
central simple algebra has order at most $2$, the superalgebra may
have no superinvolution of the first kind. Indeed, central simple
superalgebras of odd type never have superinvolutions of the first
kind (Theorem \ref{th:oddfirstkind}), and the same happens for
central division superalgebras of even type with nontrivial odd part
(Lemma \ref{evendivision} and Theorem \ref{th:evenfirstkind}).
However, Albert Theorem has a graded version, where superinvolutions
are substituted by superantiautomorphisms whose square is the
grading automorphism (Theorem \ref{albertgraded}).

The situation is nicer for superinvolutions of the second kind. For
these, the natural graded version of the classical Albert-Riehm
Theorem holds.

\smallskip

The paper is organized as follows. The next section is intended to
recall the basic definitions and results on associative
superalgebras. The basic sources are Lam's book \cite{lam} and
Racine's paper \cite{racine}. Some results will be proved in
slightly different ways, useful for our purposes. Then Section 3
will deal with superinvolutions of the first kind on central simple
superalgebras. It will be shown that the existence of such
superinvolutions is severely restricted. Section 4 will be devoted
to prove the above mentioned graded version of the classical Albert
Theorem, while superinvolutions of the second kind and the graded
version of Albert-Riehm Theorem will be the object of the last
section.

\section{Basic Concepts}

\subsection{Definitions and Notations}

We recall some basic definitions (compare with \cite[Chap.
IV]{lam}). Let $F$ be a field of characteristic different from $2$.
(\emph{This restriction will be assumed without mention throughout
the paper}.) A \emph{superalgebra} (also called \emph{graded
algebra}) $A$ is a (associative) $F$-algebra given in the form
$A=A_0\oplus A_1$, $F=F\cdot 1 \subseteq A_0$ and $A_iA_j \subseteq
A_{i+j}$ (subscripts modulo 2). We observe that $A_0$ is a
subalgebra of $A$. The elements of $hA:=A_0 \cup A_1$ are called
\emph{homogeneous elements} of $A$. For $h\in hA-\{0\}$ we define
\emph{the degree} $\delta h$ of $h$ by $\delta h=i$ if $h \in A_i$
($i=0,1$). To simplify the notation, for
every homogeneous element $x$ we define $(-1)^x:=(-1)^{\delta x}$.\\
A subspace $S \subseteq A$ is called \emph{graded} if it is the
direct sum of the intersections $S_i:=S \cap A_i$. We define $h(S)=S
\cap h(A)$. The \emph{graded center} of the superalgebra $A$ is
$\widehat Z(A):=\{x\in hA\mid xh=(-1)^{xh}hx \ \forall h \in hA \}$.
We shall call $A$ a \emph{central superalgebra} over $F$ if
$\widehat Z(A)=F$. The superalgebra $A$ is said to be a \emph{simple
superalgebra} over $F$ if $A$ has no proper ($\neq 0, \neq A$)
graded twosided ideals. A finite dimensional simple superalgebra
which is also a central superalgebra is called \emph{central simple
superalgebra}
(CSS, CSGA in Lam's book, see \cite{lam}).\\
The ordinary center of $A$, i.e. $Z(A)=\{ x\in A \mid xa=ax \
\forall a \in A\}$, is a graded subalgebra. If $A$ is a CSS over
$F$, then $Z(A)=F\oplus Z_1$ ($Z_1 \subseteq A_1$). If $Z_1=0$, we
say that $A$ is of \emph{even type}. If $Z_1 \neq 0$, we say that
$A$ is of
\emph{odd type}.\\
We denote the \emph{graded tensor product} of two graded algebras
$A$ and $B$ by $A \wotimes B$. We denote the Brauer group of a field
$F$ with $B(F)$ (resp. the Brauer-Wall group with $BW(F)$). The
opposite (resp. superopposite) algebra of a central simple algebra
(resp. CSS) $A$ is denoted $A^{op}$ (resp. $A^s$; in Lam's book the
algebra $A^s$ is denoted by $A^*$, see \cite[p. 80, 99]{lam}).

\begin{examples}
We list some important examples of CSSs.
\begin{enumerate}
\item[(a)]
We denote with $(A)$ the algebra $A$ with the grading given by
$A_1=0$.
\item[(b)]
Let $A$, $B$ be CSSs. Then $A \wotimes B$ is a CSS.
\item[(c)]
Let $a \in F^{\times}$. The space $F \oplus Fu$ with the relation $u^2=a$
and the grading $\delta u=1$ defines a CSS denoted by $F\langle
\sqrt{a} \rangle$. We call it \emph{quadratic graded algebra}.
\item[(d)]
For $a,b \in F^{\times}$ we define the \emph{graded quaternion algebra} as
follows:\\ $\langle a,b\rangle:=F\langle\sqrt{a}\rangle \widehat
\otimes F\langle\sqrt{b}\rangle$ (see \cite[p. 87]{lam}).
\item[(e)]
Let $D$ be a central division algebra over a field $F$. The algebra
of \\$(n+m)\times (n+m)$-matrices $M_{n+m}(D)$ can be viewed as CSS
by taking the diagonal components $M_{n}(D)$ and $M_{m}(D)$ as the
even part and the off-diagonal components as the odd part.
\end{enumerate}
\end{examples}

\begin{remark}
We observe that the definitons in examples $(c)$ and $(d)$ cover all
nontrivial gradings on a quadratic algebra and on a quaternion
algebra. For example, given a quaternion algebra with nontrivial
grading, the odd elements are orthogonal to $1$ with respect to the
(quaternionic) norm and hence they have zero trace. Since the
restriction of the norm to the space of odd elements is not
degenerate, then the algebra is forced to be of the form $\langle
a,b\rangle$ for some $a,b \in F^{\times}$.
\end{remark}

\noindent For completeness, we recall the structure theorems for
CSSs (see \cite[Chap. IV, 3.6 and 3.8]{lam}).
\begin{theorem}\label{structureodd}
Let $A$ be a CSS of odd type. Then:
\begin{enumerate}
\item[(1)]
$Z(A)=F\oplus Fz$, where $z\in Z_1$ and $z^2=a \in F^{\times}$. The
square class of $a$ does not depend on the choice of $z\in
Z_1-\{0\}$, and $Z(A)\simeq F\langle \sqrt{a} \rangle$ as graded
algebras.
\item[(2)]
There are graded algebra isomorphisms $$A\simeq (A_0)\wotimes
F\langle \sqrt{a} \rangle\simeq (A_0)\otimes F\langle \sqrt{a}
\rangle.$$
\item[(3)]
If $a \notin F^{\times2}$, then $A$ is a central simple algebra over
$Z(A)\simeq F(\sqrt{a})$.\\ If $a\in F^{\times2}$, then $Z(A)\simeq
F \times F$, and $A \simeq A_0 \times A_0$ (as ungraded algebra).
\end{enumerate}
In any case, $A$ is a semisimple separable $F$-algebra.
\end{theorem}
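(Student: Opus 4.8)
The plan is to extract from the odd-type hypothesis a nonzero homogeneous central element $z$ of odd degree, to show that $z$ is invertible with $z^2$ a nonzero scalar, and then to use $z$ to push the whole structure back onto the even part $A_0$. First I would pick $0\neq z\in Z_1=Z(A)\cap A_1$, which is possible precisely because $A$ is of odd type. Since $z$ commutes with every element of $A$, so does $z^2$, and $z^2\in A_0$; because an even element commutes with everything if and only if it lies in $\widehat Z(A)$ (the sign $(-1)^{xh}$ being trivial for even $x$), this forces $z^2\in\widehat Z(A)=F$. To see that $z^2=a$ is a \emph{nonzero} scalar, note that $Az$ is a graded (as $z$ is homogeneous) two-sided (as $z$ is central) ideal, nonzero since $z\in Az$; simplicity gives $Az=A$, so $1=bz$ for some $b$, and then $z^2=0$ would force $z=bz^2=0$. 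The same idea gives $Z_1=Fz$: for $z'\in Z_1$ the even central element $zz'$ lies in $F$, hence $z'\in Fz$; and replacing $z$ by $\lambda z$ ($\lambda\in F^\times$) scales $a$ by $\lambda^2$, so its square class is well defined. Sending $1\mapsto 1$, $u\mapsto z$ gives a graded isomorphism $F\langle\sqrt a\rangle\to Z(A)$, which proves (1).

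Next I would observe that multiplication by $z$ is an $F$-linear bijection $A_0\to A_1$: it is injective because $xz=0$ implies $xa=xz^2=0$, and surjective because $y=(a^{-1}yz)z$ with $a^{-1}yz\in A_1A_1\subseteq A_0$. Hence $A=A_0\oplus A_0z$ with $z$ central and $z^2=a$, and the map $x\otimes 1\mapsto x$, $x\otimes u\mapsto xz$ is a grading-preserving algebra isomorphism $(A_0)\otimes_F F\langle\sqrt a\rangle\to A$ (multiplicativity uses only that $z$ is central with $z^2=a$). Since $(A_0)$ is concentrated in degree $0$ there is never a sign in the corresponding graded tensor product, so $(A_0)\otimes F\langle\sqrt a\rangle=(A_0)\wotimes F\langle\sqrt a\rangle$, which is exactly the two displayed isomorphisms of (2).

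For (3) the decisive point is that $A_0$ is itself a central simple $F$-algebra. Its centre is $Z(A)\cap A_0=F$, because an element of $A_0$ commuting with $A_0$ automatically commutes with the central element $z$; and if $I$ is a two-sided ideal of $A_0$, then $I\oplus Iz$ is a graded two-sided ideal of $A$ (again using $z$ central, $z^2=a$), so by simplicity $I$ is $0$ or $A_0$. With $A_0$ a finite-dimensional CSA over $F$ in hand I would split into cases: if $a\notin F^{\times2}$ then $K:=F\langle\sqrt a\rangle\cong F(\sqrt a)$ is a quadratic field extension, equal to $Z(A)$, and $A\cong A_0\otimes_F K$ is central simple over $K$ (a central simple algebra stays central simple under a field extension of the base); if $a=c^2\in F^{\times2}$ then $F\langle\sqrt a\rangle\cong F[t]/(t-c)\times F[t]/(t+c)\cong F\times F$, so $Z(A)\cong F\times F$ and $A\cong A_0\otimes_F(F\times F)\cong A_0\times A_0$ as ungraded algebras. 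In either case $A$ is the tensor product over $F$ of the separable algebra $A_0$ with the separable commutative algebra $F\langle\sqrt a\rangle$ (separable because $\charac F\neq 2$), hence $A$ is semisimple and separable over $F$.

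The bijectivity and multiplicativity verifications are routine; the only two places where the central-simplicity of $A$ as a superalgebra is genuinely used — and hence the only real content — are the invertibility of $z$ (that is, $z^2\neq 0$) and the ideal correspondence $I\leftrightarrow I\oplus Iz$ that forces $A_0$ to be simple. I expect the only mild subtlety to be keeping the grading bookkeeping straight, in particular the remark that a graded tensor product with an algebra concentrated in degree $0$ is just an ordinary tensor product, which is what identifies the two isomorphisms in (2).
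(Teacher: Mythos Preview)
Your proof is correct. Note, however, that the paper does not actually prove this theorem: it is stated as a recalled structure result with a citation to Lam's book (\cite[Chap.~IV, 3.6]{lam}), so there is no in-paper argument to compare against. Your argument is essentially the standard one that appears in that reference: extract an odd central element $z$, use graded simplicity to force $z^2\in F^\times$, identify $A_1=A_0z$, and then read off the tensor decomposition and the dichotomy for $Z(A)$ according to whether $a$ is a square. The two places you flag as the real content --- invertibility of $z$ via the graded ideal $Az$, and simplicity of $A_0$ via the ideal correspondence $I\leftrightarrow I\oplus Iz$ --- are exactly the substantive steps, and you handle them cleanly.
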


\begin{theorem} \label{structureeven}
Let $A$ be a CSS of even type, $A_1\neq 0$. Suppose $A$ is
isomorphic, as ungraded algebra, to $M_n(D)$, the central simple
$F$-algebra of $n\times n$-matrices with entries in the central
division algebra $D$ over $F$. Then:
\begin{enumerate}
\item[(1)]
There exists an element $z \in Z(A_0)$ such that $Z(A_0)=F\oplus Fz$
and $z^2=a \in F^{\times}$. The element $z$ is determined up to a
scalar multiple by these properties, and hence the square class of
$a$ is uniquely determined.
\item[(2)]
Suppose $a\in F^{\times2}$. Then $Z(A_0)\simeq F\times F$ and
$A\simeq M_{r+s}(F) \wotimes (D)$ with $r+s=n$. Moreover, $A_0\simeq
M_{r}(D) \times M_{s}(D)$.
\item[(3)]
Suppose $a\notin F^{\times2}$, and the field $Z(A_0)\simeq
F(\sqrt{a})$ can be embedded into $D$. Then there exists a grading
on $D$ such that $A\simeq (M_n(F))\wotimes D$. In this case,
$A_0\simeq M_n(D_0)$ is a central simple algebra over $Z(A_0)$.
\item[(4)]
Suppose $a\notin F^{\times2}$, and the field $Z(A_0)\simeq
F(\sqrt{a})$ can not be embedded into $D$. Then $n=2m$ is even, and
$A\simeq (M_m(D))\wotimes\langle -a,1\rangle$ as graded algebras. In
this case, $A_0\simeq M_m(D)\wotimes F(\sqrt{a})$ is a central
simple algebra over $Z(A_0)$.
\end{enumerate}
In any case, $A_0$ is a semisimple separable $F$-algebra.
\end{theorem}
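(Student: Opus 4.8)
The plan is to recast the grading through its order-two \emph{grading automorphism} $\sigma\colon x_0+x_1\mapsto x_0-x_1$, so that $A$ becomes a central simple $F$-algebra equipped with an $F$-automorphism of order $\le 2$ (non-trivial since $A_1\neq 0$), with $A_0$ and $A_1$ the $(\pm1)$-eigenspaces of $\sigma$ and with graded isomorphisms corresponding exactly to the $\sigma$-equivariant algebra isomorphisms. (Even type means $Z(A)=F$; since the Jacobson radical of $A$ is $\sigma$-stable, hence graded, $A$ is semisimple, and $Z(A)=F$ then forces $A$ to be central simple over $F$, so indeed $A\simeq M_n(D)$ as in the hypothesis.) By Skolem--Noether, $\sigma=\operatorname{Int}(z)$ for some invertible $z$, unique up to a factor in $Z(A)^{\times}=F^{\times}$; hence $z^2\in Z(A)^{\times}=F^{\times}$, and calling this element $a$, its square class is intrinsic to $(A,\sigma)$. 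This $z$ will be the element sought in part (1): it lies in $A_0$, and $A_0=C_A(z)=C_A(F[z])$.

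Next I would compute $Z(A_0)$, thereby splitting into the four cases according to whether $a$ is a square. If $a\notin F^{\times 2}$, then $K:=F[z]\simeq F(\sqrt a)$ is a field, so by the double centralizer theorem $A_0=C_A(K)$ is central simple over $K$ with $C_A(A_0)=K$, whence $Z(A_0)=A_0\cap C_A(A_0)=K$. If $a\in F^{\times 2}$, I would rescale $z$ so that $z^2=1$ and pass to the orthogonal idempotents $e_{\pm}=\tfrac12(1\pm z)$, which are nonzero (because $z\notin F$, as $\sigma\ne\operatorname{id}$) and central in $A_0$; a short computation using $\sigma(e_{\pm})=e_{\pm}$ gives $A_0=e_+Ae_+\oplus e_-Ae_-$ and $A_1=e_+Ae_-\oplus e_-Ae_+$, with each corner $e_{\pm}Ae_{\pm}$ central simple over $F$, of the form $M_r(D)$ resp.\ $M_s(D)$ with $r+s=n$, so that $A_0\simeq M_r(D)\times M_s(D)$. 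In both cases $Z(A_0)=F\oplus Fz$, which proves (1); the uniqueness of $z$ up to a scalar follows since $z'=\lambda+\mu z$ with $(z')^2\in F^{\times}$ forces $\lambda=0$. In all cases $A_0$ is a matrix algebra over a division algebra, or a product of two such, with center $F\times F$ or $F(\sqrt a)$, hence semisimple and separable over $F$.

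What remains is the graded decomposition, and the recurring device is another application of Skolem--Noether to move $z$ into a convenient position. In case (2) I would conjugate so that $z=\diag(1,\dots,1,-1,\dots,-1)\in M_n(F)\subseteq M_n(F)\otimes_F D=M_n(D)$ with $r$ ones; then $\operatorname{Int}(z)$ is trivial on the scalar copy $1\otimes D$ and realizes on $M_n(F)$ precisely the grading defining $M_{r+s}(F)$, so $A\simeq M_{r+s}(F)\wotimes(D)$ with $A_0\simeq M_r(D)\times M_s(D)$, giving (2). In case (3), where $F(\sqrt a)$ embeds in $D$, I would fix $w\in D$ with $w^2=a$ and use that the two embeddings of $K$ into $A$, via $z$ and via $wI_n=1\otimes w$, are conjugate, so that we may take $z=1\otimes w$; then $\operatorname{Int}(z)$ is trivial on $M_n(F)\otimes 1$ and equals $\operatorname{Int}(w)$ on $1\otimes D$, giving $A\simeq(M_n(F))\wotimes D$ where $D$ carries the grading $D_0=C_D(w)$, $D_1=\{d:wd=-dw\}$ (nonzero since $w\notin F$); the double centralizer theorem inside $D$ gives $Z(D_0)=C_D(w)\cap F[w]=F(\sqrt a)$, so $A_0=M_n(D_0)$ is central simple over $Z(A_0)\simeq F(\sqrt a)$, giving (3).

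Case (4) is where the real work lies: $a\notin F^{\times 2}$ and $K\simeq F(\sqrt a)$ does not embed in $D$; since $[K:F]=2$, this forces $D_K:=D\otimes_F K$ to be a division algebra, so every $D_K$-module has $F$-dimension divisible by $\dim_F D_K=2\dim_F D$. Writing $A=\End_D(D^n)$, the embedding $z$ makes $D^n$ a $D_K$-module with $A_0=C_A(K)=\End_{D_K}(D^n)$; hence $n=2m$ is even and $A_0\simeq M_m(D_K)=M_m(D)\otimes_F F(\sqrt a)$, central simple over $F(\sqrt a)$. For the grading I would use $A\simeq M_2(F)\otimes_F M_m(D)$ and move $z$ by Skolem--Noether onto $J\otimes 1$ with $J=\bigl(\begin{smallmatrix}0&1\\a&0\end{smallmatrix}\bigr)$; then $\operatorname{Int}(z)$ is trivial on $1\otimes M_m(D)$ and is $\operatorname{Int}(J)$ on $M_2(F)\otimes 1$, so $(A,\sigma)$ is graded-isomorphic to $(M_m(D))\wotimes M_2(F)$, the latter factor carrying the grading defined by $\operatorname{Int}(J)$. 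A direct check exhibits in $M_2(F)$ the odd elements $s_0=\bigl(\begin{smallmatrix}0&1\\-a&0\end{smallmatrix}\bigr)$ and $t_0=\bigl(\begin{smallmatrix}1&0\\0&-1\end{smallmatrix}\bigr)$, with $s_0^2=-a$, $t_0^2=1$ and $s_0t_0=-t_0s_0$, which generate $M_2(F)$ and identify it, as a graded algebra, with $F\langle\sqrt{-a}\rangle\wotimes F\langle\sqrt{1}\rangle=\langle -a,1\rangle$; hence $A\simeq(M_m(D))\wotimes\langle -a,1\rangle$, giving (4). Thus I expect the main obstacle to be concentrated in case (4): extracting the parity of $n$ from the non-embedding hypothesis and producing the split graded quaternion factor; a secondary, genuinely case-sensitive subtlety is that the clean double-centralizer computation of $Z(A_0)$ is available only when $F[z]$ is a field, so the square case needs the separate argument with corner algebras.
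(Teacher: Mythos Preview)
The paper does not actually prove this theorem: it is quoted verbatim from Lam's book (\cite[Chap.~IV, 3.8]{lam}) as background, with no argument supplied. So there is no ``paper's own proof'' to compare against; what matters is whether your argument stands on its own, and it does.

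Your approach---encode the grading by its grading automorphism $\sigma$, realize $\sigma=\iota_z$ via Skolem--Noether, read off $Z(A_0)=F[z]$ via the double centralizer theorem, and then in each case conjugate $z$ into a standard position inside a visible tensor decomposition of $M_n(D)$---is exactly the classical line (and essentially Lam's). All four cases are handled correctly. In particular, the key step you flagged in case~(4) is fine: for a quadratic extension $K/F$ and a central division $F$-algebra $D$, the equivalence ``$K$ embeds in $D$ $\Longleftrightarrow$ $D\otimes_F K$ is not a division algebra'' holds (for the nontrivial direction, if $x=d_0+\theta\in D_K$ is a zero-divisor with $\theta^2=a$, then $x\bar x=d_0^2-a\in D$ must vanish, producing $w=d_0\in D$ with $w^2=a$). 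The parity of $n$ then drops out of the $D_K$-module structure on $D^n$ exactly as you describe, and your explicit identification of $(M_2(F),\iota_J)$ with $\langle -a,1\rangle$ via $s_0,t_0$ checks. The one cosmetic point: in case~(2) you might state explicitly that $Z(A_0)=Fe_+\oplus Fe_-=F\oplus Fz$, closing the loop with part~(1).
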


\noindent We call an algebra \emph{central division superalgebra}
(CDS) if it is a CSS where every non-zero homogeneous element is
invertible.

\begin{remark}\label{superdiv}
If $A$ is a CSS of even type ($A_1\neq 0$, $Z(A_0)=F\oplus Fz$ and
$z^2=a \in F^{\times}$) with $a\notin F^{\times2}$ (i.e. in the cases $(3)$
and $(4)$ of the Theorem above), then we may write it as
$$A\simeq (M_n(F))\wotimes \Delta$$
where $\Delta$ is a CDS (see for example theorem 2 in
\cite{cristina} or the proof of
\cite[Chap. IV, 3.8]{lam}).\\
On the other hand, if $A$ is a CSS of odd type, then it is of the
form $(M_k(F))\wotimes \Delta$, where $\Delta$ is a CDS of odd type.
\end{remark}
\noindent Let $A=A_0+A_1$ be a superalgebra. For all $x\in A_0$,
$y\in A_1$ we define $\nu (x+y)=x-y$. The induced map $\nu$ is a
graded automorphism of $A$ called the \emph{grading automorphism}
(the main involution in Lam's book, see \cite[Chap. IV, Definition
3.7]{lam}). If $A$ is a CSS of even type with $Z(A_0)=F1+Fz$ and
$z^2\in F^\times$, then recall that $\nu(x)=zxz^{-1}$; in
particular, we have $uz=-zu$ for all $u \in A_1$. A
\emph{superantiautomorphism} of a superalgebra $A$ is a graded
additive map $\sigma:A\rightarrow A$ such that for all $a_\alpha \in
A_\alpha$ and $b_\beta \in A_\beta$ $\ \sigma(a_\alpha
b_\beta)=(-1)^{\alpha \beta}\sigma(b_\beta) \sigma(a_\alpha).$ We
call \emph{superinvolution} of $A$ a superantiautomorphism $\tau$
such that $\  \tau^2(x)=x$ for all $x\in A$. As for involutions, we
say that the superinvolution is \emph{of the first kind} if it is
$F$-linear, and \emph{of the second kind} otherwise.

\begin{remark}
Let $A$, $B$ be CSSs over $F$ with superinvolutions $\tau_A$ and
$\tau_B$. The map $\tau_A \otimes \tau_B$ is a superinvolution on $A
\wotimes B$.
\end{remark}

\noindent Before the study of involutions of the first kind, we
consider the case of the CSS $M_{n+m}(F)$: it shows that the
existence of a superantiautomorphism does not always imply the
existence of a superinvolution. However, here it is easy to see that
there is always a superantiautomorphism whose square is the grading
automorphism.

\begin{proposition}
The CSS $M_{n+m}(F)$ has always a superantiautomorphism $\varphi$
with $\varphi^2=\nu$. It has a superinvolution of the first kind if
and only if $n=m$ or $nm$ is even. \label{splitsuper}
\end{proposition}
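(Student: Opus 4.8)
The plan is to write $M_{n+m}(F)$ as $\End_F(V)$ where $V=V_0\oplus V_1$ is a super vector space with $\dim V_0=n$, $\dim V_1=m$, so that the $\bZ_2$-grading on the algebra is the one induced by the grading on $V$. A graded map $\varphi$ with $\varphi(fg)=(-1)^{fg}\varphi(g)\varphi(f)$ is precisely the adjoint map with respect to a homogeneous bilinear form $B$ on $V$ which is \emph{supersymmetric} in the appropriate sense; and $\varphi^2$ is then determined by how $B$ behaves under transposition. Concretely, for a nondegenerate homogeneous $B:V\times V\to F$ define $\varphi=\varphi_B$ by $B(f x,y)=(-1)^{f x}B(x,\varphi(f)y)$. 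One checks $\varphi_B$ is a superantiautomorphism, and $\varphi_B^2=\id$ exactly when $B$ is $\epsilon$-super-Hermitian with the sign forced by the grading (an even symmetric form on $V_0$ together with a symplectic form on $V_1$, the two summands being orthogonal), while a form that is symmetric on $V_0$ and \emph{symmetric} on $V_1$ — which always exists — yields $\varphi_B^2=\nu$ because the grading automorphism $\nu$ acts as $+1$ on $\End(V_0)\oplus\End(V_1)$ and as $-1$ on the off-diagonal part $\Hom(V_0,V_1)\oplus\Hom(V_1,V_0)$, matching the $(-1)$ picked up when swapping a $V_0$-slot with a $V_1$-slot in a non-alternating form. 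This gives the first assertion: such a $\varphi$ always exists, since a nondegenerate symmetric form on each of $V_0$, $V_1$ always exists over any field.

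For the second assertion, by the analysis above a superinvolution of the first kind on $\End_F(V)$ corresponds to a nondegenerate homogeneous bilinear form $B=B_0\perp B_1$ on $V=V_0\perp V_1$ with $B_0$ symmetric on $V_0$ and $B_1$ \emph{alternating} (symplectic) on $V_1$ — this is the content of computing the two cases of $\varphi_B^2=\id$. A symmetric nondegenerate form on $V_0$ exists for every $n$; a nondegenerate alternating form on $V_1$ exists if and only if $m$ is even. That already shows: if $m$ is even (in particular if $nm$ is even via $m$ even) a superinvolution of the first kind exists. The remaining input is the case $n=m$ with $n$ odd: here there is no symplectic form on $V_1$, but one can instead use an \emph{odd} nondegenerate form, i.e. a perfect pairing $V_0\times V_1\to F$ (equivalently an isomorphism $V_1\cong V_0^*$), and build the superinvolution from that — this needs $\dim V_0=\dim V_1$, i.e. $n=m$, and works for all parity. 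I would handle this by exhibiting explicitly the superantiautomorphism $f\mapsto$ (signed block-transpose) on $\begin{pmatrix}M_n(F)&M_n(F)\\ M_n(F)&M_n(F)\end{pmatrix}$ and checking directly that its square is the identity, using that the off-diagonal blocks carry the odd part.

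The converse — that $nm$ odd and $n\neq m$ \emph{forces} no superinvolution of the first kind — is the main obstacle and where I expect to spend the real work. Here one argues: a superinvolution of the first kind restricts to an involution of the first kind on the even part $A_0\cong M_n(F)\times M_m(F)$. Because $\nu$ (inner, but not inner by an element of $A_0$ in general) relates the two factors, either (i) the involution preserves each factor $M_n(F)$ and $M_m(F)$ separately, or (ii) it swaps them, which is impossible when $n\neq m$ since the factors have different dimension. So the involution preserves each factor, giving involutions of the first kind $\tau_n$ on $M_n(F)$ and $\tau_m$ on $M_m(F)$; each is adjunction with respect to a form that is either symmetric or alternating. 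The compatibility of $\tau_n$ and $\tau_m$ with the odd part $A_1\cong\Hom(V_0,V_1)\oplus\Hom(V_1,V_0)$ and with the relation $\tau^2=\id$ on all of $A$ then forces a constraint linking the \emph{types} (symmetric vs. alternating) of $\tau_n$ and $\tau_m$: specifically, the induced form on $V_0\otimes V_1$ coming from the odd part must be consistent, and tracking the signs shows that the types must \emph{differ} (one symmetric, one symplectic). A symplectic form on $F^k$ exists iff $k$ is even, so we would need exactly one of $n,m$ even — contradicting $nm$ odd. I would make the sign-tracking precise by writing, for a homogeneous $a\in\Hom(V_0,V_1)$, the identity $\tau^2(a)=a$ in terms of $\tau_n,\tau_m$ and the forms, and reading off the required relation between the two discriminant-free invariants (symmetric/alternating). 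This last bookkeeping is routine in principle but is exactly the delicate point, so I would present it carefully rather than wave at it.
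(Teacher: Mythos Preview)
Your outline is correct; the necessity argument you sketch (restrict to $A_0$, then force the two induced involution types to differ via the odd part) does work, and the bookkeeping you defer is genuinely routine: writing $\tau_n(a)=G_0^{-1}a^tG_0$ and $\tau_m(d)=G_1^{-1}d^tG_1$ with $G_i^t=\epsilon_iG_i$, the action of $\tau$ on a block $b\in M_{n\times m}(F)$ is forced up to scalar to be $b\mapsto\lambda G_1^{-1}b^tG_0$, and combining $\tau^2=\id$ on odd elements with the superantiautomorphism law on a product of two odd elements gives $\epsilon_0\epsilon_1=-1$.

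This is a genuinely different and more elementary route than the paper's. The paper dispatches necessity in one line by invoking the classification of superinvolutions on simple superalgebras (Racine, or the paper's own Theorem~\ref{th:primesuperinvolhermitian} and Corollary~\ref{co:simplesuperinvolution}): when $n\neq m$ the superinvolution must be the adjoint of an \emph{even} hermitian superform over $(F,\id)$, and such a form is symmetric on $V_0$ and alternating on $V_1$ (or vice versa after relabelling), forcing one of $n,m$ even. Your argument recovers exactly this constraint without appealing to the classification, at the cost of the explicit sign chase. The paper's approach is shorter but presupposes the structure theorem; yours is self-contained.

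One small slip in your sufficiency paragraph: the even-form construction you describe (symmetric on $V_0$, alternating on $V_1$) only produces a superinvolution when $m$ is even. To cover all of ``$nm$ even'' you must also note that alternating on $V_0$ and symmetric on $V_1$ works when $n$ is even --- precisely the ``types differ'' dichotomy you correctly identify in the converse. As written, your parenthetical ``(in particular if $nm$ is even via $m$ even)'' leaves the case $n$ even, $m$ odd unaddressed.
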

\begin{proof}
As usual, we denote the inverse of a matrix $a$ with $a^t$. We
observe that the
map $$\varphi:M_{n+m}(F)\longrightarrow M_{n+m}(F),\qquad \left(%
\begin{array}{cc}
  a & b \\
  c & d \\
\end{array}%
\right)\mapsto \left(%
\begin{array}{cc}
  a^t & -c^t \\
  b^t & d^t \\
\end{array}%
\right)$$ is a superantiautomorphism. Moreover, one can check easily
that $\varphi^2=\nu$.\\
For the case $n=m$, it is enough to observe that the map
$$\tau:M_{n+n}(F)\longrightarrow M_{n+n}(F),\qquad \left(%
\begin{array}{cc}
  a & b \\
  c & d \\
\end{array}%
\right)\mapsto \left(%
\begin{array}{cc}
  d^t & -b^t \\
  c^t & a^t \\
\end{array}%
\right)$$ is a superinvolution. If $n\neq m$ and $\tau$ is a
superinvolution on $M_{n+m}(F)$, then $\tau$ is adjoint to a
superform (see \cite[Theorem 7]{racine} or Theorem
\ref{th:primesuperinvolhermitian}). Such a superform can be defined
if and only if $n$ or $m$ is even.
\end{proof}

\subsection{The graded Skolem-Noether theorem}
\noindent In this section we give a version of the Skolem-Noether
Theorem for superalgebras (see also Lemma $1$ in \cite{riehm}).

\noindent For any homogeneous and invertible element of a
superalgebra $A$, consider the \emph{inner automorphism} $\iota_a$
given by
\[
\iota_a(x)=(-1)^{ax}axa^{-1}
\]
for any homogeneous $x\in A$. We denote the disjoint union with
$\sqcup$. In the next proposition we describe the set
$\mathrm{Autg}(A)$ of graded $F$-linear automorphisms of $A$.

\begin{proposition}
Let $A$ be a CSS. Then $\mathrm{Autg}(A)=\iota_{A_0^\times} \sqcup
\iota_{A_1^\times}$
\end{proposition}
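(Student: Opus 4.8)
The plan is to mimic the classical proof of the Skolem--Noether theorem, adapted to the $\bZ/2\bZ$-graded setting, using the structure theory just recalled. First I would reduce to the split case: since $A$ is a CSS, by Theorems \ref{structureodd} and \ref{structureeven} (and Remark \ref{superdiv}) we may write $A \simeq (M_n(F)) \wotimes \Delta$ for a central division superalgebra $\Delta$, or handle the odd-type case similarly. The key observation is that a graded $F$-linear automorphism $\phi$ of $A$ makes $A$ into a graded $A\wotimes A^s$-module (or, equivalently, a graded bimodule over $A$) in two ways: via the ordinary multiplication, and via $a\cdot x := \phi(a)x$ on the left. Both module structures, restricted to the homogeneous components, are modules over the same central simple associative $F$-algebra $A\wotimes A^s$ (which is a matrix algebra over a division superalgebra, hence has a unique graded simple module up to shift), and dimension count forces these two graded $A\wotimes A^s$-modules to be isomorphic.

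Concretely, the second step is to produce, from such a graded $A\wotimes A^s$-module isomorphism, a homogeneous invertible element $a\in A$ implementing $\phi$. An $A\wotimes A^s$-module homomorphism $f\colon A\to A$ between the two structures satisfies $f(\phi(b)xc) = b f(x) c$ for all homogeneous $b,c,x$, up to the appropriate signs coming from the graded tensor product; setting $a := f(1)$ and tracking signs gives $\phi(b)a = (-1)^{?}ab$, i.e. $\phi(b) = (-1)^{ab}aba^{-1} = \iota_a(b)$ once one checks $a$ is invertible. Invertibility of $a$ follows because $f$, being a nonzero homomorphism between simple modules (after passing to homogeneous pieces) is an isomorphism, so $f(1)$ is not a zero divisor; in a finite-dimensional setting a homogeneous non-zero-divisor is invertible. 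One must be slightly careful: $a$ need not be homogeneous a priori, but writing $a = a_0 + a_1$ and using that $\phi$ is graded forces one of the two components alone to implement $\phi$, so after discarding the other we get $a\in A_0^\times$ or $a\in A_1^\times$. This simultaneously shows the two sets on the right exhaust $\mathrm{Autg}(A)$; the reverse inclusion (that every $\iota_a$ with $a$ homogeneous invertible is a graded automorphism) is a direct check, and the union is disjoint because $\iota_{a_0}$ with $a_0\in A_0^\times$ fixes the grading in the naive sense on the center of $A_0$ while $\iota_{a_1}$ with $a_1\in A_1^\times$ does not — more precisely one compares the action on $z\in Z(A_0)$, using $uz=-zu$ for $u\in A_1$, so an inner automorphism by an odd element sends $z\mapsto -z$ whereas one by an even element fixes $z$.

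I expect the main obstacle to be the careful bookkeeping of signs in the graded tensor product $A\wotimes A^s$ and in the definition of $\iota_a(x)=(-1)^{ax}axa^{-1}$, since the whole point of a \emph{graded} Skolem--Noether statement is that these signs must line up so that $\iota_a$ is genuinely multiplicative and graded. In the ungraded case one simply invokes that $A\otimes A^{op}$ is central simple and all modules are free; here one needs the analogous fact that $A\wotimes A^s$ is a CSS (Example (b)) together with a description of its graded simple module — this is where the odd/even dichotomy and the possibility that $A\wotimes A^s$ is of odd type (so has "two" shifted simple modules related by parity change) could complicate the dimension-matching argument. A clean way around this is to treat $A$ and the two bimodule structures as modules over the \emph{ungraded} algebra $A\otimes A^s$ while remembering the grading, apply classical Skolem--Noether to get a not-necessarily-homogeneous $a$, and only at the end split $a$ into homogeneous parts and argue that one part suffices; this sidesteps having to classify graded simple modules and isolates the graded content into the final, short homogeneity argument.
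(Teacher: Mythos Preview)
Your main approach---make $A$ into a graded module over a CSS in two ways, use uniqueness of the graded simple module up to parity shift, and extract a homogeneous intertwiner---is exactly the paper's argument; the only cosmetic difference is that the paper writes $A=\End_\Delta(V)$ and lets $\Delta^s\wotimes A\cong\End_F(V)$ act on $V$, whereas you let $A\wotimes A^s$ act on $A$. In either version the intertwiner $f$ is a \emph{graded} module map, so $a=f(1)$ is automatically homogeneous and your ``split $a=a_0+a_1$'' step is unnecessary there.

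Your proposed shortcut---apply \emph{ungraded} Skolem--Noether and then pick a homogeneous component---does have a gap. When $A$ is of odd type, $Z(A)=F\oplus Fz$ with $z$ odd, so $A$ is not central (and if $z^2\in F^{\times 2}$ not even simple) as an ungraded algebra, and classical Skolem--Noether is unavailable. Even in the even case, if the surviving component is $a_1\in A_1^\times$ you get $\phi(x)=a_1xa_1^{-1}$, which is $\nu\circ\iota_{a_1}=\iota_{za_1}$, not $\iota_{a_1}$; so ``one part suffices'' only after replacing $a_1$ by $za_1$. These are fixable, but they show the shortcut is not actually cleaner than the graded module argument.
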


\begin{proof}
Let $\varphi$ be an element of $\mathrm{Autg}(A)$. We can write
$A=\End( _{\Delta}V)$, where $V$ is a graded vector space with the
action of the CDS $\Delta$ on the left. We assume that $A_1\neq 0$
(i.e. $V_1\neq 0$). Let $\Psi: \Delta^s\wotimes A \rightarrow
\End_{F}(V)$, defined by $d \otimes a \mapsto (\Psi(d \otimes a): v
\mapsto (-1)^{dv}dva).$ By dimension count, $\Psi$ is an isomorphism
of CSS. There are two $\Delta^s\wotimes A$-module structures on $V$,
namely $v.(d\otimes a)=(-1)^{dv}dva$ and $v\diamond (d\otimes
a)=(-1)^{dv}dva^\varphi$. But the only irreducible
$\End_F(V)$-modules of $\End_F(V)$ are $V$ and $V^s$, the
superopposite module of $V$, i.e $V$ with the roles
of $V_0$ and $V_1$ interchanged.\\
Two cases may occur.\\
\textbf{1.} There exists a $F$-linear even isomorphism $\sigma:V
\rightarrow V$ such that $v.(d\otimes
a)\sigma=(v\sigma)\diamond(d\otimes a)$
for all $d \in \Delta, v \in V, a \in A$.\\
Setting $a=1$ we obtain that $\sigma \in A_0$. Then, setting $d=1$,
we get $\varphi=\iota_{\sigma}$.\\
\textbf{2.} There exists a $F$-linear odd isomorphism $\sigma:V
\rightarrow V$ such that $v.(d\otimes
a)\sigma=(-1)^{a}(-1)^{d}(v\sigma)\diamond(a\otimes d)$ for all $d
\in \Delta, v \in V, a \in A$. Setting again, as before, $a=1$ and
$d=1$
we get $\varphi= \iota_{\sigma}$.
\end{proof}

\begin{remark} If, in the Proposition above, $A$ is even, then the
grading automorphism is $\nu=\iota_z$, where $z$ is any element such
that $Z(A_0)=F1+Fz$ and $z^2\in F^\times$. If $A$ is odd, then
$Z(A)=F1+Fz$ for an invertible odd element $z$, and again
$\nu=\iota_z$. Since here $A_1=A_0z$, it follows that
$\iota_{A_1^\times}=\nu\circ\iota_{A_0^\times}$.
\end{remark}

\begin{remark}
If $\varphi \in \mathrm{Autg}(A)$ fixes $Z(A)$ and $Z(A_0)$
elementwise, then $\varphi \in \iota_{A_0^\times}$.
\end{remark}

\subsection{The graded Jacobson density Theorem and superinvolutions}

The results of Michel Racine (see \cite{racine}) for prime
superalgebras with superinvolution will be proven in a slightly
different way. \noindent In this context we use words prime and
semiprime in the obvious graded sense. We observe that every simple
superalgebra is prime.

\begin{lemma}\label{le:Brauer} Let $A$ be a semiprime
superalgebra. Then
\begin{romanenumerate}
\item \emph{(Brauer)} If $I$ is a minimal right ideal of $A$, then there is an
idempotent $e\in I_0$ such that $I=eA$. Moreover, for any
homogeneous element $x\in I$ with $xI\ne 0$, there exists an
idempotent $e=e^2\in I$ such that $I=eA$ and $ex=xe=x$.
\item If $e$ is a nonzero idempotent of $A_0$ and $eA=I$ is a minimal
right ideal of $A$, then $eAe$ is a division superalgebra, which is
isomorphic to the centralizer superalgebra $\End_A(I)$.
\item If $e$ is a nonzero idempotent of $A_0$ such that $eAe$ is a
division superalgebra, then $eA$ is a minimal right ideal of $A$.
\item If $a$ is an homogeneous element of $A$ such that $aA$ is a
minimal right ideal of $A$, then $Aa$ is a minimal left ideal of
$A$.
\end{romanenumerate}
\end{lemma}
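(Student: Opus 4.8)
The plan is to adapt the classical (ungraded) proofs of the corresponding facts about minimal right ideals in semiprime rings, keeping careful track of the $\bZ/2$-grading and the sign conventions, and using that "semiprime" is taken in the graded sense. For (i), I would start from $I^2\ne 0$ (otherwise $I$ would generate a graded nilpotent two-sided ideal, contradicting semiprimeness), so there is a homogeneous $x\in I$ with $xI\ne 0$; then $xI$ is a nonzero right ideal contained in $I$, hence $xI=I$ by minimality, and in particular there is a homogeneous $e\in I$ with $xe=x$. The element $r\mapsto xr$ gives a surjection $I\to I$ with nonzero kernel-complement behaviour controlled by $K=\{r\in I: xr=0\}$, a graded right ideal strictly inside $I$, hence $K=0$; so $e$ is the unique element of $I$ with $xe=x$, and from $x e^2 = xe$ one gets $e^2=e$. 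Since $e=e^2$ and $xe=x$, replacing $e$ by itself one checks $ex=x$ as well (apply $x(\,\cdot\,)$ to $ex-x$ and use $K=0$), and $eA\subseteq I$ is a nonzero right ideal, so $eA=I$. Finally, $e$ homogeneous and idempotent forces $e\in I_0$, because writing $e=e_0+e_1$ and squaring, the degree-$1$ component of $e^2$ is $e_0e_1+e_1e_0$, which must equal $e_1$, and a short argument (or: pass to $e_0$, which is again an idempotent with $e_0A=eA$) gives the homogeneous idempotent in $I_0$; for the "moreover" clause one simply runs the same argument starting from the given $x$.

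For (ii), given $e\in A_0$ with $eA=I$ minimal, the map sending $f\in\End_A(I)$ (graded $A$-module endomorphisms) to $f(e)\in eAe$ is the candidate isomorphism: $f(e)=f(e\cdot e)=f(e)\cdot e\in Ie=eAe$ since $f$ is a right $A$-module map, it is clearly additive and grading-preserving, and it is bijective with inverse $a\mapsto (r\mapsto ar)$ for $a\in eAe$; multiplicativity of this correspondence is a direct check. That $eAe=\End_A(I)$ is a division superalgebra is exactly the graded Schur lemma: $I$ is an irreducible graded $A$-module, so every nonzero homogeneous graded endomorphism is a bijection — here one uses that kernel and image of a homogeneous endomorphism are graded submodules. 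For (iii), conversely, if $eAe$ is a division superalgebra and $0\ne J\subseteq eA$ is a graded right ideal, pick a homogeneous $x\in J$ with $xA\ne 0$; then (again by semiprimeness) $xAx\ne 0$, so some homogeneous $a$ with $xax\ne 0$; now $exae\in eAe$ is a nonzero homogeneous element, hence invertible in $eAe$, and multiplying back shows $e\in J$, whence $J=eA$; so $eA$ is minimal. For (iv), if $aA$ is minimal with $a$ homogeneous, by (i) there is a homogeneous idempotent $e\in A_0$ with $aA=eA$, and then $Aa$ and $Ae$ have the same left-ideal lattice; applying the left-handed mirror image of (ii)--(iii) to the idempotent $e$ (the superalgebra $eAe$ being the same whether computed for left or right ideals) shows $Ae=Aa$ is a minimal left ideal.

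The main obstacle I anticipate is bookkeeping rather than conceptual: making sure at every step that the ideals and submodules produced (kernels, images, $xI$, $xAx$, $exae$, etc.) are \emph{graded}, so that minimality in the graded sense can actually be invoked, and that the idempotent extracted in (i) genuinely lands in $A_0$ and not merely in $hA$. The cleanest way to handle the grading of $xI$ when $x$ is homogeneous is that left multiplication by a homogeneous element is a graded (degree $\delta x$) map, so it carries graded subspaces to graded subspaces; and the passage from a homogeneous idempotent $e$ to its even part $e_0$ (with $e_0A=eA$) is the device that keeps everything inside $A_0$. Everything else is the classical Brauer/Schur argument transported verbatim, with signs $(-1)^{xy}$ playing no essential role because we only ever multiply elements one at a time against the module.
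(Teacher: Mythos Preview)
Your plan for (i)--(iii) matches the paper's argument essentially step for step. One cosmetic remark on (i): once $x$ is homogeneous and $xe=x$ for some $e\in I$, comparing graded components already lets you replace $e$ by its even part $e_0\in I_0$ with $xe_0=x$, as the paper does; the later discussion of splitting $e=e_0+e_1$ is then unnecessary (though your observation that a nonzero homogeneous idempotent must be even is correct).

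There is a genuine gap in (iv). From $aA=eA$ you obtain $ea=a$ (since $a\in eA$), but \emph{not} $ae=a$, and in general $Aa\neq Ae$ as subsets of $A$. Concretely, take $A=M_2(F)$ with trivial grading, $a=E_{12}$, $e=E_{11}$: then $aA=eA=FE_{11}\oplus FE_{12}$, yet $Ae=FE_{11}\oplus FE_{21}$ while $Aa=FE_{12}\oplus FE_{22}$. The ``moreover'' clause of (i) does not help here, because it requires $aI\neq 0$, and in this example $a^2=0$ so $a(aA)=0$. The correct bridge---and this is exactly what the paper uses---is that right multiplication by $a$ gives a left $A$-module homomorphism $Ae\to Aa$, $y\mapsto ya$, which is surjective since $(Ae)a=A(ea)=Aa$. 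As $Ae$ is irreducible by the left-handed version of (iii) and $Aa\ni a\neq 0$, the image $Aa$ is itself irreducible, i.e.\ a minimal left ideal. So your strategy of reducing to $Ae$ is right; just replace the equality $Ae=Aa$ by ``$Aa$ is a nonzero homomorphic image of $Ae$''.
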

\begin{proof}
For (i), note that $I^2\ne 0$ by semiprimeness, so $I^2=I$ by
minimality and there is a nonzero homogeneous element $x\in I_0\cup
I_1$ such that $xI\ne 0$. Again, since $I$ is minimal, $I=xI$, and
hence there is an element $e\in I_0$ such that $x=xe$. Take $J=\{
r\in I: xr=0\}$. Then $J$ is a right ideal of $A$ strictly contained
in $I$, so $J=0$. Since $e^2-e\in J$, we conclude that $e$ is a
nonzero idempotent of $I_0$, and since $0\ne eI\subseteq I$, the
minimality of $I$ forces $I=eI$, as desired. In particular $ey=y$
for any $y\in I$, so the last assertion follows.

\noindent For (ii)
notice that if $x$ is an homogeneous element of $A$ such that
$exe\ne 0$, then $0\ne exeA\subseteq eA$ so $exeA=eA$ by minimality.
Therefore there exists an homogeneous element $y\in A$ such that
$exey=e$, so $(exe)(eye)=e$, which is the unity of the superalgebra
$eAe$. Therefore, any nonzero homogeneous element of $eAe$ has a
right inverse. This is enough to ensure that $eAe$ is a division
superalgebra. Besides, the linear map $eAe\rightarrow \End_A(I)$
given by $exe\mapsto \rho_{exe}:I\rightarrow I$, such that
$\rho_{exe}(z)=exez$ for any $z\in I$ is easily shown to be an
isomorphism. Note that this is valid even if $A$ is not semiprime.

\noindent Now assume that $0\ne e=e^2\in A_0$ satisfies that $eAe$
is a division superalgebra, and let $I$ be a nonzero right ideal
contained in $eA$. Let $x$ be a nonzero homogeneous element of $I$,
so $x=ex$. If $exAe$ were $0$, then $(AexA)^2$ would be $0$ too,
contradicting the semiprimeness of $A$. Therefore there is an
homogeneous element $y\in A$ such that $exye\ne 0$, and since $eAe$
is a division superalgebra, there is another homogeneous element $z$
such that $xyeze=(exye)(eze)=e$. In particular, $e\in xA$ and
$eA\subseteq xA\subseteq I$, so $I=eA$. This shows that $ea$ is a
minimal right ideal.

\noindent Finally, assume that $a$ is an homogeneous element such
that $aA$ is a minimal right ideal. As in (i), let $e$ be an
idempotent such that $ae=ea=a$ and $aA=eA$. Because of (ii), $eAe$
is a division superalgebra, and by symmetry, item (iii) shows that
$Ae$ is a minimal left ideal. But $Aa\ne 0$ by semiprimeness, and
$Aa=(Ae)a$ is a homomorphic image of the irreducible left module
$Ae$, so it is irreducible too. That is, $Aa$ is a minimal left
ideal.
\end{proof}

\noindent The idempotents $e$ such that $eAe$ is a division
superalgebra will be called \emph{primitive idempotents}. Given a
superinvolution $*$ in an associative superalgebra, $H(A,*)$ and
$S(A,*)$ will denote, respectively, the set of fixed elements by $*$
and the set of elements $x\in A$ such that $x^*=-x$.

\begin{theorem}\label{th:primesupercases}
Let $A$ be a prime superalgebra with minimal right ideals, and let
$*$ be a superinvolution of $A$. Then one and only one of the
following situations occurs:
\begin{romanenumerate}
\item There exists a primitive idempotent such that $e^*=e$,
\item $A_1=0$ and there exists a primitive idempotent such that
$eH(A,*)e^*=0$. In this case $eAe$ is a field and there are elements
$u\in eAe^*$ and $v\in e^*Ae$ such that $u,v\in S(A,*)$, $uv=e$ and
$vu=e^*$.
\item There exists a primitive idempotent such that $eA_0 e^*=0$.
In this case, this idempotent $e$ can be taken satisfying that there
are elements $u\in eA_1 e^*$ with $u^*=u$ and $v\in e^*A_1 e$ with
$v^*=-v$, such that $uv=e$ and $vu=e^*$.
\end{romanenumerate}
\end{theorem}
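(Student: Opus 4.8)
The proof proceeds by starting from an arbitrary primitive idempotent and successively modifying it until it lands in one of the three listed normal forms. First I would fix a primitive idempotent $e_0$ and consider the idempotent $f=e_0^*$, which is primitive too since $*$ is an anti-automorphism (and even, so degree is preserved). The key preliminary observation is that either $e_0 A f\ne 0$ or $e_0 A f=0$; in the first case one can use the minimality of the right ideals $e_0A$ and $fA$ together with Lemma \ref{le:Brauer} to replace $e_0$ by an equivalent primitive idempotent $e$ (i.e. one with $eA\cong e_0A$) for which a nice ``pairing'' element exists. The mechanism is the standard one for involutions: choose a homogeneous element $a\in e_0 A f$ with $a\ne 0$; then $aA=e_0 A$ and $Aa=Af$ by parts (i) and (iv) of Lemma \ref{le:Brauer}, and $a^*\in e_0 A f$ as well. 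One studies the element $a^*a\in fAf$ (or $aa^*\in e_0Ae_0$): if it is nonzero, then since $fAf$ is a division superalgebra it is invertible there, and after rescaling $a$ by an element of the division superalgebra $fAf$ one can arrange $a^*a=f$; setting $e:=aa^*$ one checks $e$ is an idempotent with $e^*=e$, giving case (i).

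The substantive work is the analysis when this normalization is obstructed, i.e. when for every homogeneous $a\in e_0Af$ one has $a^*a=0$, equivalently $e_0Af\cdot(e_0Af)^*=0$, equivalently $e_0 A f A e_0=e_0(AfA)e_0$ is a nilpotent-like object — but by primeness $AfA=A$, so this is impossible unless $e_0Af=0$ already. Hence the real dichotomy is: either (i) holds, or $e_0 A e_0^*=0$ for \emph{every} primitive idempotent $e_0$. In the latter situation I would look at the $\bZ/2$-grading. If $A_1=0$ (the ungraded case) then $e_0Ae_0^*=0$ with $e_0Ae_0$ a field forces, via the standard Albert-type argument on $(e_0+e_0^*)A(e_0+e_0^*)$, the $2\times2$ matrix picture $\begin{pmatrix} e_0Ae_0 & e_0Ae_0^*\\ e_0^*Ae_0 & e_0^*Ae_0^*\end{pmatrix}$ with $e_0Ae_0^*=0=e_0^*Ae_0^*$ impossible unless instead it is the ``hyperbolic'' configuration where $e_0Ae_0^*=0$ but $e_0^*Ae_0\ne 0$; picking $0\ne v\in e_0^*Ae_0$ and $u=v^*\in e_0Ae_0^*$, after scaling one gets $uv=e_0$, $vu=e_0^*$, and one computes $u,v\in S(A,*)$ by the anti-automorphism identity and the fact that $e_0Ae_0$ is commutative — this is case (ii). (Here one must check the degenerate subcase cannot reduce to (i), which is where the hypothesis ``$e_0Ae_0^*=0$ for all primitive $e_0$'' is used.)

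If instead $A_1\ne 0$, then the obstruction ``$e_0 A_0 e_0^*=0$'' is the graded analogue, and this is case (iii): here $e_0 A e_0^*$ may be nonzero but lives entirely in degree $1$. I would take $0\ne u\in e_0 A_1 e_0^*$; then $u^*\in e_0A_1e_0^*$ too (degree preserved, and $e_0^{**}=e_0$), and $u^*u\in e_0^*A_0e_0$. The superinvolution relation gives $(uu^*)^*=u^{**}u^* \cdot(-1)^{1\cdot1}=-uu^*$ — wait, more carefully: $(uu^*)^*=(-1)^{\delta u\,\delta u^*}(u^*)^*u^*=(-1)^{1}u u^*=-uu^*$, so $uu^*\in S(A,*)\cap e_0A_0e_0$; but $e_0A_0e_0$ is a field (being the even part of the division superalgebra $e_0Ae_0$, which is commutative in degree $0$), so $uu^*$ is either $0$ or a nonzero symmetric \emph{and} skew element — in characteristic $\ne 2$ that forces $uu^*=0$ \emph{unless} we instead pick $u$ so that $uu^*\ne 0$, reconsidering; one actually shows one can choose $u$ with $uu^*=0$ and, by nondegeneracy/primeness, $e_0^*A_1e_0\ne 0$, take $0\ne v$ there, rescale so $uv=e_0$, $vu=e_0^*$, and then $u^*=u$, $v^*=-v$ drop out of the relations. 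The main obstacle throughout is bookkeeping the signs in the superantiautomorphism identity when transporting idempotents and pairing elements across degrees, and making sure the three cases are genuinely \emph{mutually exclusive} — that is, that one cannot be in (ii) or (iii) and also have \emph{some} primitive idempotent fixed by $*$; this exclusivity follows because in cases (ii), (iii) the hypothesis is that \emph{no} primitive idempotent is fixed, so one argues by contradiction that if a fixed one existed the pairing construction of the first paragraph would apply to it.
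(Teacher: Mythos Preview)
Your central dichotomy rests on a miscalculation. You correctly note that for homogeneous $a\in e_0Ae_0^*$ one has $a^*\in e_0Ae_0^*$ (since $e_0$ is even and $(e_0^*)^*=e_0$), but you then place $a^*a$ in $fAf$ and $aa^*$ in $e_0Ae_0$, and identify $e_0Af\cdot(e_0Af)^*$ with $e_0AfAe_0$. In fact $(e_0Ae_0^*)^*=e_0Ae_0^*$, so $a^*a$ and $aa^*$ both lie in $e_0Ae_0^*$, not in the Peirce corners $e_0Ae_0$ or $e_0^*Ae_0^*$; there is nothing to invert in a division superalgebra. Your primeness argument then collapses: the conclusion ``$e_0Ae_0^*=0$ for every primitive $e_0$'' is not just unproved but impossible, since $e_0Ae_0^*=0$ would give $e_0A=e_0(Ae_0^*A)=(e_0Ae_0^*)A=0$. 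Concretely, for the symplectic involution on $M_2(F)$ (the prototype of case~(ii)) one has $e_0=E_{11}$, $e_0^*=E_{22}$, $e_0Ae_0^*=FE_{12}\ne 0$, and $E_{12}^*=-E_{12}$: the space $e_0Ae_0^*$ is nonzero, $*$-stable, and consists entirely of skew elements. The same error recurs in your case~(ii) sketch when you set $u=v^*\in e_0Ae_0^*$ for $v\in e_0^*Ae_0$; in reality $v^*\in e_0^*Ae_0$.

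The paper's route is to work instead with $aa^*$ for homogeneous $a$ in a minimal right ideal $I=eA$. The dichotomy is: either some such $aa^*$ satisfies $aa^*I\ne 0$, in which case Lemma~\ref{le:Brauer} produces a $*$-fixed primitive idempotent; or $xx^*=0$ for every homogeneous $x\in I$, which by polarization gives $eH(A_0,*)e^*=0$. One then splits on whether $eA_0e^*$ vanishes. If $eA_0e^*\ne 0$, one constructs $u,v\in S(A,*)$ with $uv=e$, $vu=e^*$, builds from them a superinvolution on $\Delta=eAe$ that restricts to the identity on $\Delta_0$, deduces that $\Delta$ is a field, and then \emph{proves} $A_1=0$; this is a conclusion in~(ii), not a case hypothesis, so your split on ``$A_1=0$ or not'' sidesteps the real work. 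If $eA_0e^*=0$ one lands in case~(iii), but a further replacement of $e$ by $e^*$ may be needed to arrange $u^*=u$ rather than $u^*=-u$.
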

\begin{proof}
Assume that there is a minimal right ideal $I$ and an homogeneous
element $a\in I$ such that $aa^*I\ne 0$. Then take $x=aa^*\,(\in I)$
and note that $x^*=(-1)^xx$. By Lemma \ref{le:Brauer} there is a
primitive idempotent $e\in I$ with $I=eA$ and $xe=ex=x$. Therefore,
$xee^*=xe^*=(-1)^xx^*e^*=(-1)^x(ex)^*=(-1)^xx^*=x$. Then, as in the
proof of Lemma \ref{le:Brauer}, $f=ee^*$ is an idempotent, with
$f^*=f$ and $I=fA$ and case (i) appears.

\noindent Otherwise, for any minimal right ideal $I$ of $A$, and any
homogeneous element $a\in I$, $aa^*I=0$ holds. Take a minimal right
ideal $I$ of $A$ and assume that there exists an homogeneous element
$a\in I$ such that $aa^*\ne 0$. By minimality, $I=aa^*A= aA$, and
$I^*I=Aaa^*aa^*A\subseteq A(aa^*I)=0$. By Lemma \ref{le:Brauer}(iv),
$Aa$ is a minimal left ideal, and hence $a^*A=(Aa)^*$ is a minimal
right ideal. Take $J=a^*A$. If there were an homogeneous element $x$
in $J$ with $xx^*\ne 0$, as before we would have $0=J^*J=Aaa^*A$,
but this is impossible since $A$ is semiprime.

\noindent Hence, either the situation in (i) holds or there is a
minimal right ideal $I$ of $A$ such that
\begin{equation}\label{eq:xx*0}
xx^*=0\quad\text{for any homogeneous element $x\in I$.}
\end{equation}
(Notice that up to now the arguments are valid assuming only that
$A$ is semiprime.)

\medskip

\noindent Let $I$ be such a minimal right ideal. By Lemma
\ref{le:Brauer}, $I=eA$ for a primitive idempotent $e$. Hence, for
any homogeneous element $x\in A$, $exx^*e^*=(ex)(ex)^*=0$. In
particular, for any $x\in A_0$, $e(e+x)(e+x)^*e^*=0=ee^*=exx^*e^*$,
so $e(x+x^*)e^*=0$ (that is, $eH(A_0,*)e^*=0$), or
\begin{equation}\label{eq:exe*skew}
(ex_0 e^*)^*=-ex_0 e^*
\end{equation}
for any $x_0\in A_0$. (Note that if $A_1=0$, this condition is
equivalent to the condition in \eqref{eq:xx*0}.)

\noindent Assume first that $eA_0 e^*\ne 0$, and take $z\in A_0$
such that $eze^*\ne 0$. By primeness, $eze^*Ae\ne 0$, and since
$eAe$ is a division superalgebra, we can obtain easily another
element $t\in A_0$ such that $eze^*te=e$. Take $u=eze^*$ and
$v=e^*te$, so $uv=e$. Besides, $u^*=-u$ holds by
\eqref{eq:exe*skew}.

\noindent Now, $v^2=e^*t(ee^*)te=0$ (by \eqref{eq:xx*0}),
$u^2=-uu^*=0$, as $u\in I_0$, $e^*e=(uv)^*uv=v^*u^*uv=-v^*u^2v=0$,
and $v=e^*v=(uv)^*v=v^*u^*v=-v^*uv=-v^*e=-v^*$, so
$e^*=v^*u^*=(-v)(-u)=vu$. Let us denote by $\Delta$ the division
superalgebra $eAe$. Consider the linear map $\Delta\rightarrow
\Delta:$ $d\mapsto \bar d=ud^*v$. Note that for any homogeneous
$d,d_1,d_2\in \Delta$:
\[
\begin{split}
&\bar{\bar d}=u(ud^*v)^*v=uv^*du^*v=uvduv=ede=d,\\[6pt]
&\overline{d_1d_2}=u(d_1d_2)^*v=(-1)^{d_1d_2}ud_2^*d_1^*v
   =(-1)^{d_1d_2}ud_2^*e^*d_1^*v\\
   &\phantom{\overline{d_1d_2}}
   =(-1)^{d_1d_2}ud_2^*vud_1^*v=(-1)^{d_1d_2}\bar d_2\bar d_1.
\end{split}
\]
Therefore, this map is a superinvolution. But for any
$d\in\Delta_0$,
\[
\bar d=ud^*v=eud^*e^*v=-(eud^*e^*)^*v=-edu^*e^*v=eduv=ede=d,
\]
where we have used \eqref{eq:exe*skew}, together with the fact that
$u^*=-u$ and $uv=e$. Therefore the restriction of the
superinvolution $d\mapsto \bar d$ is the identity, and since this is
an ordinary involution of the division algebra $\Delta_0$, we
conclude that $\Delta_0$ is a field. Besides, for any $d\in\Delta_1$
with $\bar d=\pm d$ (that is $d\in H(\Delta,-)_1\cup
S(\Delta,-)_1$), $d^2\in\Delta_0$, so $d^2=\overline{d^2}$. Hence
\[
d^2=\overline{d^2}=(-1)^{dd}\bar d^2=-d^2.
\]
Thus $d^2=0$, and since $\Delta$ is a division superalgebra, $d=0$.
Hence $\Delta_1=H(\Delta,-)_1\oplus S(\Delta,-)_1=0$, and
$\Delta=\Delta_0$ is a field. But for any $x,y\in A_1$,
$e(x+y)(x+y)^*e^*=0=exx^*e^*=0yy^*e^*$ by \eqref{eq:xx*0}, so
$exy^*e^*=-eyx^*e^*$. On the other hand, \eqref{eq:exe*skew} shows
that $(exy^*e^*)^*=-exy^*e^*$, that is, $-eyx^*e^*=-exy^*e^*$. We
conclude that $exy^*e^*=0$ for any $x,y\in A_1$. Therefore, $eA_1
A_1 e^*=0$, so $eA_1 A_1 v=eA_1 A_1 e^*v=0$. But $eA_1 A_0
v\subseteq eA_1 e=\Delta_1=0$. Hence $eA_1 Av=0$, which implies,
since $A$ is prime, that $eA_1 =0$. Now, $eAA_1=eA_0
A_1+(eA_1)A_1\subseteq eA_1+(eA_1)A_1=0$, and $A_1=0$ by primeness.
We are in case (ii) of the Theorem, since $eH(A,*)e^*$ is $0$
because of \eqref{eq:exe*skew}.

\medskip

\noindent Finally, assume that the minimal right ideal $I=eA$
satisfies \eqref{eq:xx*0}, but $eA_0 e^*=0$. Again, let $\Delta$ be
the division superalgebra $eAe$. Since $eAe^*=eA_1 e^*\ne 0$ and
$\Delta_1 eA_1 e^*\subseteq eA_0 e^*=0$, and the nonzero elements of
$\Delta_1$ are invertible, it follows that $\Delta_1=0$ in this
case.

\noindent If there exists an odd element $x\in A_1$ such that
$e(x+x^*)e^*\ne 0$, then there is an element $z=x+x^*=z^*\in A_1$
such that $eze^*\ne 0$. As before we find an element $t\in A_1$ such
that $eze^*te=e$ and take $u=eze^*$ and $v=e^*te$. Then
$u^*=ez^*e^*=u$, $uu^*=0=ee^*$ because of \eqref{eq:xx*0}, so
$u^2=0$. Also, 
$v=e^*v=(uv)^*v=-(v^*u^*)v=-v^*uv=-v^*e=-v$, and
$e^*=(uv)^*=-v^*u^*=vu$, thus obtaining the situation in item (iii)
of the Theorem.

\noindent Otherwise, for any $x\in A_1$ $e(x+x^*)e^*=0$, or
$(exe^*)^*=-(exe^*)$. As before there are $z,t\in A_1$ such that
$eze^*te=e$. Take $u=eze^*$ and $v=e^*te$, so that $e=uv$, but now
$u^*=-u$ and $v=e^*v=(uv)^*v=-v^*u^*v=v^*uv=v^*e=v^*$, and
$e^*=(uv)^*=-v^*u^*=vu$. Consider in this case the primitive
idempotent $f=e^*$. Then $fA_0 f^*=vuA_0 uv=ve(uA_0 u)e^*v\subseteq
v(eA_0 e^*)v=0$, so in particular the condition in \eqref{eq:xx*0}
holds trivially for the minimal right ideal $fA$, and
$f(v+v^*)f^*=2fvf^*=2v\ne 0$. Therefore, it is enough to change $e$
to $f$ to obtain the situation in item (iii).

\medskip

\noindent To finish the proof of the Theorem, it must be checked
that only one of the three possible situations occur. It is clear
that (ii) and (iii) are mutually exclusive, since $A_1 =0$ in (ii)
but not in (iii). Also, if $e$ is a primitive idempotent as in (i),
$I=eA$, and $f$ is a primitive idempotent such that the minimal
right ideal $J=fA$ satisfies the condition in \eqref{eq:xx*0}, then
by primeness $IJ\ne 0$, so by minimality $I=IJ$. Thus, there is an
homogeneous element $x\in I$ such that $0\ne xJ$ and $I=xJ$ by
minimality. Then $e=xz$ for some homogeneous element $z\in J$ of the
same parity as $x$. But $e=e^2=ee^*=(xz)(xz)^*=\pm xzz^*x^*$, which
is $0$ by \eqref{eq:xx*0}, a contradiction. Therefore, case (i) is
not compatible with cases (ii) or (iii).
\end{proof}

\medskip

\noindent Let $(\Delta,-)$ be a division superalgebra endowed with a
superinvolution, let $V$ be a left module over $\Delta$ and let
$h:V\times V\rightarrow \Delta$ be a $\epsilon$-hermitian
($\epsilon=\pm 1$) nondegenerate form of degree $l$ ($l\in\{ 0,
1\}$). That is, $h$ is biadditive, $h(V_i,V_j)\subseteq
\Delta_{i+j+l}$ for any $i,j\in\{ 0, 1\}$ and
\[
h(dx,y)=dh(x,y),\qquad h(y,x)=\epsilon(-1)^{xy}\overline{h(x,y)}
\]
for any homogeneous elements $x,y\in V_0\cup V_1$ and $d\in \Delta$.
This implies that $h(x,dy)=(-1)^{dy}h(x,y)\overline{d}$ for any
homogeneous $x,y\in V$ and $d\in \Delta$.

\noindent The case of $\Delta_1=0$, $\Delta_0$ a field, $-$ the
identity and $\epsilon=-1$ corresponds to the alternating bilinear
forms over a field. If $\epsilon=1$, $h$ will just be said to be an
hermitian form.

\noindent Consider the superalgebra with superinvolution $\calL(V)$
with
\begin{multline*}
\calL(V)_i=\{f\in \End_\Delta(V)_i: \exists f^*\in\End_\Delta(V)\
\textrm{such that}\\ h(xf,y)=(-1)^{fx}h(x,yf^*)\ \forall x\in
V,\forall y\in V_0\cup V_1\}.
\end{multline*}
Note that $f^*$ is unique by the nondegeneracy of $f$, which gives
the superinvolution in $\calL(V)$, and that the action of the
elements of $\End_\Delta(V)$ is written on the right.

\noindent For any homogeneous elements $v,w\in V$, consider the
$\Delta$-linear map $h_{v,w}$ given by:
\[
a\mapsto ah_{v,w}=h(a,v)w\,.
\]
For any homogeneous $x,y,v,w\in V$,
\[
\begin{split}
h(xh_{v,w},y)&=h(x,v)h(w,y)\\
 &=(-1)^{v(w+y)}h(x,\overline{h(w,y)}v)\\
 &=\epsilon(-1)^{v(w+y)+wy}h(x,g(y,w)v)\\
 &=\epsilon(-1)^{v(w+y)+wy}h(x,yh_{w,v}),
\end{split}
\]
so that
\begin{equation}\label{eq:hvw*}
(h_{v,w})^*=\epsilon(-1)^{vw}h_{w,v}
\end{equation}
for any homogeneous $v,w\in V$. Also, for any homogeneous
$f\in\calL(V)$:
\begin{equation}\label{eq:hvwf}
h_{v,w}f=h_{v,wf},\qquad fh_{v,w}=(-1)^{fv}h_{vf^*,w}.
\end{equation}
Therefore, the span $h_{V,V}$ of the $h_{v,w}$'s is an ideal of
$\calL(V)$ closed under the superinvolution, and it acts irreducibly
on $V$. Besides, for any homogeneous element $\psi$ in the
centralizer $\End_{h_{V,V}}(V)$ (action on the left), and any
homogeneous $x,v,w\in v$, $\psi(xh_{v,w})=\psi(x)h_{v,w}$, so that,
if $h(x,v)=0$, then also $h(\psi x,v)=0$. Since $h$ is
nondegenerate, there is an homogeneous element $d_x\in \Delta$ such
that $\psi(x)=d_x x$. If now we take $v$ with $h(x,v)=1$, then we
obtain that $\psi(w)=d_xw$ for any homogeneous $w$. That is, $\psi$
is the left multiplication by $d_x$. This shows that the centralizer
of the action of $h_{V,V}$ on $V$ is $\Delta$.

\begin{lemma}\label{le:Jacobson} Let $A$ be an
associative superalgebra and let $V$ be an irreducible right
$A$-module with centralizer $\Delta=\End_A(V)$ (action of $\Delta$
on the left), which is a division superalgebra by Schur's Lemma.
Then for any homogeneous $\Delta$-linearly independent elements
$v_1,\ldots,v_n$ there is an homogeneous element $a\in A$ such that
$v_1a\ne 0$, $v_2a=\cdots=v_na=0$.
\end{lemma}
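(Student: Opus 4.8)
The plan is to mimic the classical proof of the Jacobson Density Theorem, but carried out in the graded setting and paying attention to parities throughout. I would argue by induction on $n$. For $n=1$, the claim is that for any homogeneous nonzero $v_1\in V$ there is a homogeneous $a\in A$ with $v_1a\ne 0$; this is immediate, since $v_1A$ is a nonzero (because $1\in A$, or because $V$ is irreducible and $v_1A$ is a submodule containing $v_1$) submodule of the irreducible module $V$, hence equals $V$, and in particular $v_1A\ne 0$, so some homogeneous $a$ works.

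For the inductive step, suppose the statement holds for any $n-1$ homogeneous $\Delta$-linearly independent vectors, and let $v_1,\dots,v_n$ be homogeneous and $\Delta$-linearly independent. Set $B=\{a\in A: v_2a=\cdots=v_na=0\}$, a graded right ideal of $A$; I must show $v_1B\ne 0$. Suppose for contradiction that $v_1B=0$. By the inductive hypothesis applied to $v_2,\dots,v_n$, for each $i\in\{2,\dots,n\}$ there is a homogeneous $b_i\in A$ with $v_ib_i\ne 0$ and $v_jb_i=0$ for $j\in\{2,\dots,n\}\setminus\{i\}$; since $v_ib_iA$ is a nonzero submodule of $V$ it equals $V$, so after replacing $b_i$ by $b_i a$ for a suitable homogeneous $a$ (which keeps $b_i$ in the relevant annihilator, as it is a right ideal condition) we may assume $v_ib_i\ne 0$ is any prescribed homogeneous element; in effect the map $a\mapsto (v_2 a,\dots,v_n a)$ from the graded right ideal $B'=\{a: v_2a=0,\dots, \widehat{v_i a},\dots\}$ — more cleanly, I would set up the standard device: consider $W=\{(v_2a,\dots,v_na): a\in A\}\subseteq V^{n-1}$, which by the inductive hypothesis is all of $V^{n-1}$, and then the map $v_2a\mapsto v_1a$ (extended componentwise) is well-defined as a map $V^{n-1}\to V$ precisely because $v_1B=0$; this map is a homogeneous(-component-preserving) right $A$-module homomorphism $V^{n-1}\to V$, hence given by a tuple $(\psi_2,\dots,\psi_n)$ of elements $\psi_i\in\End_A(V)=\Delta$, so that $v_1a=\sum_{i=2}^n \psi_i(v_i a)=\sum_{i=2}^n (\psi_i v_i)a$ for all $a\in A$. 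Since this holds for all $a$ and $V$ is a faithful module over $A/\mathrm{Ann}(V)$ — more directly, $(v_1-\sum \psi_i v_i)A=0$ implies $v_1=\sum_{i=2}^n\psi_i v_i$ (as $1\in A$), contradicting the $\Delta$-linear independence of $v_1,\dots,v_n$.

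The one genuinely graded point, which I expect to be the main thing to check carefully rather than a deep obstacle, is that the module homomorphism $V^{n-1}\to V$ produced above is built from \emph{homogeneous} elements of $\Delta$ and that parities match: the vectors $v_i$ are homogeneous of possibly different degrees, so the projections and the resulting $\psi_i$ live in $\Delta_{\delta v_1-\delta v_i}$ (indices mod $2$), and one must phrase the linear-independence contradiction so that it is a statement about homogeneous $\Delta$-combinations, which is exactly what $\Delta$-linear independence of homogeneous vectors means. A second small point is to make sure that when I shrink $b_i$ or $a$ to land inside the prescribed annihilators I only ever use that those annihilators are right ideals, so that multiplying on the right by elements of $A$ preserves membership; this is automatic. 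No step requires anything beyond the graded Schur Lemma (already invoked in the statement) and the irreducibility of $V$, so I do not anticipate a real obstacle — the content is entirely in being careful with degrees.
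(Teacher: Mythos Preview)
Your argument is correct and is the same inductive strategy the paper uses; the only difference is packaging. The paper annihilates $v_3,\ldots,v_n$ (rather than $v_2,\ldots,v_n$), obtains a single homogeneous $d\in\Delta$ from the well-defined $A$-map $v_2a\mapsto v_1a$ on that annihilator, and then invokes the induction hypothesis once more on the $n-1$ homogeneous vectors $v_1-dv_2,v_3,\ldots,v_n$ to reach the contradiction; you instead first establish full density for the $n-1$ vectors $v_2,\ldots,v_n$ and read off all the $\psi_i$ at once from an $A$-map $V^{n-1}\to V$. Both handle the graded point identically, observing that the centralizer element(s) produced are homogeneous of degree $\delta v_1-\delta v_i$, so the resulting dependence relation is a homogeneous one.
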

\begin{proof}
This is proved by induction on $n$, the case $n=1$ being trivial.
Assuming the result proven for $n-1$, let $J=\{a\in A: v_3a=\cdots
v_na=0\}$ be the right annihilator of $v_3,\ldots,v_n$. By the
induction hypothesis $v_2J\ne 0 \ne v_1J$, and by irreducibility
$V=w_2J=w_1J$. If there is an homogeneous element $a\in J$ with
$w_2a=0\ne w_1 a$, we are done. Otherwise, the map $\psi:
V=w_2J\rightarrow V=w_1J$ such that $\psi(w_2a)=w_1a$ for any $a\in
J$ is well defined and belongs to the centralizer
$\End_A(V)=\Delta$, so that $\psi=d$ for some homogeneous element
$d\in \Delta$. But then $(w_1-dw_2)J=0$, so by the induction
hypothesis, $w_1-dw_2\in \Delta w_3+\cdots+\Delta v_n$, a
contradiction.
\end{proof}

\begin{corollary}\label{co:Jacobson} \textbf{\emph{(Jacobson's density)}}\quad
Let $A$ be an associative superalgebra and let $V$ be an irreducible
right $A$-module with centralizer $\Delta=\End_A(V)$ (action of
$\Delta$ on the left). Then for any homogeneous $\Delta$-linearly
independent elements $v_1,\ldots,v_n$ and for any elements $w_1,
\ldots,w_n$ there is an element $a\in A$ such that $v_ia=w_i$ for
$i=1,\ldots,n$.
\end{corollary}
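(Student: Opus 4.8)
The plan is to derive the statement directly from Lemma \ref{le:Jacobson} by a standard ``separation plus generation'' argument. First I fix an index $i\in\{1,\dots,n\}$. Since the list $v_1,\dots,v_n$ consists of homogeneous $\Delta$-linearly independent elements, so does any reordering of it; applying Lemma \ref{le:Jacobson} to the list obtained by moving $v_i$ to the front, I obtain a homogeneous element $a_i\in A$ with $v_ia_i\ne 0$ and $v_ja_i=0$ for every $j\ne i$.

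Next I use irreducibility of $V$. For the nonzero element $v_ia_i$, the subset $(v_ia_i)A$ is a submodule of $V$; it is nonzero (if some nonzero vector of $V$ annihilated all of $A$, the set of such vectors would be a nonzero proper submodule, contradicting $VA=V$), hence $(v_ia_i)A=V$ by irreducibility. In particular there is an element $b_i\in A$, not necessarily homogeneous, with $v_ia_ib_i=w_i$; note that $w_i$ need not be homogeneous and this causes no difficulty. Finally I set $a=\sum_{i=1}^n a_ib_i\in A$. For each $j\in\{1,\dots,n\}$,
\[
v_ja=\sum_{i=1}^n v_j a_i b_i=v_j a_j b_j=w_j,
\]
because $v_ja_i=0$ whenever $i\ne j$, and this is the required element.

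There is essentially no hard step here: the whole content sits in Lemma \ref{le:Jacobson}, whose inductive proof has already handled the genuinely delicate point, namely the use of the centralizer $\Delta$ to rule out the ``bad'' case where the map $w_2a\mapsto w_1a$ is $\Delta$-linear. The only minor things to keep in mind are that reordering preserves $\Delta$-linear independence (so Lemma \ref{le:Jacobson} may be applied with any $v_i$ in first position), that the vanishing $v_ja_i=0$ for $i\ne j$ persists after right multiplication by $b_i$, and that homogeneity of the $w_i$'s is never needed -- only the $a_i$'s must be chosen homogeneous, which is exactly what Lemma \ref{le:Jacobson} provides.
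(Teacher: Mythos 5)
Your proof is correct and coincides with the intended argument: the paper states Corollary \ref{co:Jacobson} without proof as an immediate consequence of Lemma \ref{le:Jacobson}, and your construction $a=\sum_{i=1}^n a_ib_i$, combining the separating homogeneous elements supplied by the lemma with irreducibility of $V$ to reach each (not necessarily homogeneous) $w_i$, is precisely that standard derivation. One cosmetic remark: the annihilator $\{v\in V: vA=0\}$, if nonzero, must equal all of $V$ by irreducibility, which contradicts $VA=V$; your phrase ``nonzero proper submodule'' blurs these two cases, but the conclusion $(v_ia_i)A=V$ stands.
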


\begin{corollary}\label{co:hVVsimple}
Let $(\Delta,-)$ be a division superalgebra endowed with a
superinvolution, and let $V$ be a left $\Delta$-module endowed with
a nondegenerate $\epsilon$-hermitian form $h:V\times V\rightarrow
\Delta$. Then $h_{V,V}$ is the only simple ideal of any subalgebra
of $\calL(V)$ containing it. In particular, any such subalgebra is
prime.
\end{corollary}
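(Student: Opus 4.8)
The plan is to show first that $h_{V,V}$ itself is a simple superalgebra, and then that it is necessarily contained in — and in fact is — the unique simple ideal of any intermediate subalgebra. The key structural facts already established are: $h_{V,V}$ is an ideal of $\calL(V)$ (by \eqref{eq:hvwf}), it acts irreducibly on $V$, and the centralizer of its action on $V$ is exactly $\Delta$. Since $\Delta$ is a division superalgebra, Corollary \ref{co:Jacobson} (Jacobson's density) applies to the right $h_{V,V}$-module $V$.

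First I would prove simplicity of $h_{V,V}$. Let $\calJ$ be a nonzero graded ideal of $h_{V,V}$. Since $h_{V,V}$ acts irreducibly on $V$, either $V\calJ=0$ or $V\calJ=V$. The former is impossible: if $v\calJ=0$ for all $v$, then in particular $v(h_{w,z}\,\psi\,h_{w',z'})=0$ for $\psi\in\calJ$, but using \eqref{eq:hvwf} to rewrite products $h_{v,w}h_{v',w'}=h(w,v')h_{v,w'}$ and the nondegeneracy of $h$, one sees that $\calJ$ would have to annihilate everything, forcing $\calJ=0$ (alternatively: $V\calJ=0$ means $\calJ\subseteq\End_A(V)$-annihilator, contradicting $\calJ\ne 0$ inside $h_{V,V}\subseteq\End_\Delta(V)$). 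So $V\calJ=V$. Now pick any homogeneous $h_{v,w}\in h_{V,V}$; choose homogeneous $v'$ with $h(v',v)=1$. Since $V\calJ=V$, write $w=v''\psi$ for some homogeneous $\psi\in\calJ$ and homogeneous $v''$ of the appropriate parity; then for all $a$, $a h_{v,w}=h(a,v)w=h(a,v)v''\psi$. Comparing with $h_{v,v''}$ and using that $h(a,v)\in\Delta$ acts $\Delta$-linearly, one gets $h_{v,w}=h_{v,v''}\psi$ up to the obvious sign, which lies in $h_{V,V}\cdot\calJ\subseteq\calJ$. Hence $\calJ=h_{V,V}$, and $h_{V,V}$ is simple. (It is finite-dimensional-like in the relevant sense — it has minimal one-sided ideals generated by the $h_{v,v}$ with $h(v,v)$ invertible — so "simple" here is the honest notion.)

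Next, let $B$ be a subalgebra of $\calL(V)$ with $h_{V,V}\subseteq B$, and let $\calJ$ be a simple ideal of $B$. Then $\calJ\cap h_{V,V}$ is an ideal of $h_{V,V}$ (since $h_{V,V}$ is an ideal of $\calL(V)$, hence of $B$); by simplicity of $h_{V,V}$ it is either $0$ or all of $h_{V,V}$. If $\calJ\cap h_{V,V}=h_{V,V}$ then $h_{V,V}\subseteq\calJ$, and then $h_{V,V}$ is a nonzero ideal of the simple superalgebra $\calJ$ — wait, $h_{V,V}$ is an ideal of $B$, hence an ideal of $\calJ$ since $\calJ\subseteq B$; simplicity of $\calJ$ forces $\calJ=h_{V,V}$, as desired. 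If instead $\calJ\cap h_{V,V}=0$, then $\calJ\cdot h_{V,V}\subseteq\calJ\cap h_{V,V}=0$, so $\calJ$ annihilates $h_{V,V}$ from the left; applying this to the action on $V$, for $\psi\in\calJ$ and any $h_{v,w}$ we get $\psi\circ h_{v,w}=0$ as operators, so $(\psi x)h_{v,w}=0$, i.e. $h(\psi x,v)w=0$ for all $v,w$, which by nondegeneracy gives $\psi x=0$ for all $x$, hence $\psi=0$. Thus $\calJ=0$, contradicting simplicity. Therefore $h_{V,V}$ is the unique simple ideal of $B$. Finally, primeness of $B$: if $\calK_1\calK_2=0$ for nonzero graded ideals $\calK_i$ of $B$, then each $\calK_i\cap h_{V,V}$ is an ideal of $h_{V,V}$; by the annihilator-of-$V$ argument above each $\calK_i$ acts nontrivially on $V$ hence intersects $h_{V,V}$ nontrivially (an ideal of $B$ annihilating $h_{V,V}$ is zero, by the same computation), so $\calK_i\cap h_{V,V}=h_{V,V}$ by simplicity, giving $h_{V,V}=h_{V,V}\cdot h_{V,V}\subseteq\calK_1\calK_2=0$, a contradiction; hence $B$ is prime.

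The main obstacle is the first step — verifying carefully that a nonzero ideal of $h_{V,V}$ is everything — because one must juggle the multiplication rule $h_{v,w}h_{v',w'}=h(w,v')h_{v,w'}$ together with the sign conventions and the left $\Delta$-action, and confirm that Jacobson density (Corollary \ref{co:Jacobson}) legitimately applies with $h_{V,V}$ in the role of $A$ and $\Delta$ as the full centralizer. Everything after that is a clean "ideal of an ideal" argument plus the nondegeneracy trick to rule out ideals acting trivially on $V$.
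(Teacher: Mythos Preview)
Your argument is correct, with only cosmetic sloppiness: in Step~1 you write $w=v''\psi$ as a single product, whereas $V\calJ=V$ only gives $w=\sum_i v_i''\psi_i$; but since $h_{v,\,\cdot\,}$ is additive, $h_{v,w}=\sum_i h_{v,v_i''}\psi_i\in h_{V,V}\calJ\subseteq\calJ$ just as well (and the element $v'$ you introduce is never used). In the annihilation argument the action is on the right, so the vanishing of $\psi h_{v,w}$ reads $(x\psi)h_{v,w}=h(x\psi,v)w=0$, not $(\psi x)h_{v,w}$; the conclusion $\psi=0$ is unaffected.

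The paper's proof is organized differently. For simplicity of $h_{V,V}$ it starts from an arbitrary nonzero homogeneous element $f=\sum_{i=1}^n h_{v_i,w_i}$ of a given ideal, with $n$ minimal, uses Lemma~\ref{le:Jacobson} to find $g\in h_{V,V}$ with $w_1g\ne 0$ and $w_ig=0$ for $i\ge 2$, and concludes via \eqref{eq:hvwf} that $fg=h_{v_1,w_1g}$ is a rank-one element of the ideal, which then generates everything. This is a ``rank-reduction'' argument dual to yours: you fix a target $h_{v,w}$ and pull it into $\calJ$ using irreducibility of $V$, whereas the paper fixes an element of $\calJ$ and pushes it down to rank one using density. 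Both rest on the same identity $h_{v,w}f=h_{v,wf}$. For the second part the paper is terser: it simply observes from \eqref{eq:hvwf} that the ideal generated in $B$ by any nonzero element meets $h_{V,V}$ nontrivially, which immediately yields both uniqueness and primeness. Your ideal-intersection plus nondegeneracy argument is a more explicit unpacking of the same fact.
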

\begin{proof} If $f$ is a nonzero homogeneous element of $h_{V,V}$ and it is
written as $f=\sum_{i=1}^nh_{v_i,w_i}$ (for homogeneous $v_i,w_i$,
$i=1,\ldots ,n$), with minimal $n$, then $w_1,\ldots,w_n$ are
linearly independent over $\Delta$, so by Lemma \ref{le:Jacobson}
there is an homogeneous element $g\in h_{V,V}$ such that $\hat
w_1=w_1g\ne 0$ and $w_2g=\cdots w_ng=0$. Because of \eqref{eq:hvwf}
$fg=h_{v_1,\hat w_1}$, and using again \eqref{eq:hvwf} it follows
that $h_{V,V}$ is simple. Also, \eqref{eq:hvwf} shows that the ideal
generated by any element of a subalgebra of $\calL(V)$ containing
$h_{V,V}$ intersects $h_{V,V}$ nontrivially. Hence the result.
\end{proof}

\noindent The following result follows at once from \eqref{eq:hvwf}:

\begin{lemma}\label{le:hvV}
With the same hypotheses as in Corollary \ref{co:hVVsimple}, for any
nonzero homogeneous element $v\in V$, $h_{v,V}$ is a minimal right
ideal of any subalgebra of $\calL(V)$ containing $h_{V,V}$.
\end{lemma}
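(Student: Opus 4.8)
The claim is that for any nonzero homogeneous $v\in V$, the right submodule $h_{v,V}=\{h_{v,w}:w\in V\}$ is a minimal right ideal of any subalgebra $\mathcal B$ of $\calL(V)$ containing $h_{V,V}$. The starting observation, already recorded just before the statement, is equation \eqref{eq:hvwf}: $h_{v,w}f=h_{v,wf}$ for homogeneous $f\in\calL(V)$. Since $\calL(V)$ acts on $V$ on the right and this action restricts to $\mathcal B$, the first step is to note that $h_{v,V}$ is genuinely a right ideal of $\mathcal B$: for $b\in\mathcal B$ we have $h_{v,w}b=h_{v,wb}\in h_{v,V}$; and it is an additive subgroup because $h$ is biadditive in the second slot, so $h_{v,w}+h_{v,w'}=h_{v,w+w'}$ (one should double-check the sign conventions here, but second-slot additivity is clean).

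The heart of the matter is minimality. Let $0\neq N\subseteq h_{v,V}$ be a right $\mathcal B$-submodule; I want $N=h_{v,V}$. Pick a nonzero homogeneous element $h_{v,w}\in N$; here $w\neq 0$ since $h$ is nondegenerate and $v\neq 0$ (so that $h_{v,w}\neq 0$ forces $w\neq 0$ — more precisely, if $w=0$ the map is zero). Now I want to show that starting from $h_{v,w}$ and acting on the right by elements of $h_{V,V}\subseteq\mathcal B$ I can reach every $h_{v,w'}$. By \eqref{eq:hvwf} again, $h_{v,w}\cdot h_{w,w'}=h_{v,\,wh_{w,w'}}$, and $wh_{w,w'}=h(w,w)w'$. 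So provided I can find some $w''$ with $h(w,w'')$ an invertible element of $\Delta$, then $h_{v,w}\cdot h_{w'',w'}=h_{v,\,h(w,w'')w'}=h(w,w')'\,h_{v,w'}$ up to a unit of $\Delta$, and since $\Delta$ is a division superalgebra every nonzero homogeneous scalar is invertible; chasing this through, $N$ contains $h_{v,w'}$ for every homogeneous $w'$, hence all of $h_{v,V}$. The existence of such a $w''$ with $h(w,w'')\in\Delta^\times$ is exactly nondegeneracy of $h$ together with the fact that $\Delta$ is a division superalgebra: nondegeneracy gives some $w''$ with $h(w,w'')\neq 0$, and we may take $w''$ homogeneous, whence $h(w,w'')$ is a nonzero homogeneous element of $\Delta$, thus invertible.

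The step I expect to be the main obstacle — really the only place any care is needed — is bookkeeping the $\bZ/2$-signs, since all the operators are graded, the form $h$ has degree $l$, and the $h_{v,w}$ carry a parity equal to $\delta v+\delta w+l$. One must make sure that when $w'$ is homogeneous of a parity different from $w$, the element $h_{v,w}$ used as a ``seed'' can still be steered to $h_{v,w'}$ — but this is fine: one simply applies the argument above with a homogeneous $w''$ chosen in the appropriate graded component so that $h(w,w'')\neq 0$ (available because $h$ restricted to the relevant components is still nondegenerate, as $h$ is nondegenerate and graded), and the signs in $h_{v,w}\cdot h_{w'',w'}=\pm\, h(w,w'')\,h_{v,w'}$ are absorbed into the unit $\pm h(w,w'')\in\Delta^\times$. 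Once this is observed the Lemma ``follows at once from \eqref{eq:hvwf}'', as the text promises.
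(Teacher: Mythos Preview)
Your approach is exactly what the paper intends (its entire proof is the line ``follows at once from \eqref{eq:hvwf}''), and the outline is correct. There is one genuine slip, however: the asserted equality $h_{v,\,h(w,w'')w'}=h(w,w'')\,h_{v,w'}$ ``up to a unit of $\Delta$'' does not make sense, since $h(w,w'')\in\Delta$ while $h_{v,w'}\in\calL(V)$, and $\calL(V)$ carries no left $\Delta$-module structure making this meaningful. In particular $h_{v,dw'}$ is \emph{not} a scalar multiple of $h_{v,w'}$ in $\calL(V)$, so ``chasing this through'' needs a different move.

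The fix is immediate: with $d:=h(w,w'')\in\Delta^\times$ (nonzero homogeneous, hence invertible), act instead by $h_{w'',\,d^{-1}w'}\in h_{V,V}\subseteq\mathcal B$ to obtain
\[
h_{v,w}\cdot h_{w'',\,d^{-1}w'}
   =h_{v,\,w\cdot h_{w'',d^{-1}w'}}
   =h_{v,\,h(w,w'')d^{-1}w'}
   =h_{v,w'}\in N
\]
directly. Equivalently, and more in the spirit of the surrounding text, simply invoke the irreducibility of the $h_{V,V}$-action on $V$ (established just before Lemma~\ref{le:Jacobson}): for nonzero homogeneous $w$ one has $w\cdot h_{V,V}=V$, whence \eqref{eq:hvwf} gives $h_{v,w}\cdot h_{V,V}=h_{v,\,w\cdot h_{V,V}}=h_{v,V}$.
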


\noindent Note that, under the hypotheses of the Lemma, an
homogeneous element $w\in V$ can be taken with $h(w,v)=1$. Then, the
even element $e=h_{v,w}\in h_{V,V}$ is an idempotent with
$e^*=\epsilon(-1)^{vw}h_{w,v}$ because of \eqref{eq:hvwf} and
\eqref{eq:hvw*}.

\begin{theorem}\label{th:primesuperinvolhermitian}
Let $A$ be a prime superalgebra with minimal right ideals, and let
$*$ be a superinvolution of $A$. Then one and only one of the
following situations occurs:
\begin{romanenumerate}
\item There exists a division superalgebra with a superinvolution
$(\Delta,-)$, a left $\Delta$-module $V$ with $V_0\ne 0$ endowed
with a nondegenerate hermitian even superform $h:V\times V
\rightarrow \Delta$, and a faithful representation
$\rho:A\rightarrow \End_{\Delta}(V)$ such that $\rho(A)$ is a
subalgebra of $\calL(V)$, containing $h_{V,V}$,  closed under the
superinvolution of $\calL(V)$, and such that $\rho(a^*)=\rho(a)^*$
for any $a\in A$.
\item $A_1=0$, and there is a field $F$ and a vector space $V$ over
$F$ endowed with a nondegenerate alternating bilinear form
$h:V\times V\rightarrow F$ and a faithful representation
$\rho:A\rightarrow \End_F(V)$ such that $\rho(A)$ is a subalgebra of
$\calL(V)$, containing $h_{V,V}$,  closed under the superinvolution
of $\calL(V)$, and such that $\rho(a^*)=\rho(a)^*$ for any $a\in A$.
\item There exists a division algebra with an involution $(D,-)$, a
left $\bZ_2$-graded vector space $V$ endowed with a nondegenerate
hermitian odd form $h:V\times V\rightarrow D$ , and a faithful
representation $\rho:A\rightarrow \End_D(V)$ such that $\rho(A)$ is
a subalgebra of $\calL(V)$, containing $h_{V,V}$, closed under the
superinvolution of $\calL(V)$, and such that $\rho(a^*)=\rho(a)^*$
for any $a\in A$.
\end{romanenumerate}
\noindent Conversely, any such superalgebra is prime, contains
minimal right ideals and it is endowed with a superinvolution.
\end{theorem}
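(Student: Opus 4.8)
The theorem has two directions. The forward direction is essentially a repackaging of Theorem~\ref{th:primesupercases} together with the graded Jacobson density machinery developed just before the statement, so the main work is bookkeeping: take the primitive idempotent $e$ produced by Theorem~\ref{th:primesupercases} (in one of its three mutually exclusive cases), set $V=eA$ as a right $A$-module, and let $\Delta=\End_A(V)\cong eAe$, a division superalgebra by Lemma~\ref{le:Brauer}(ii). By Corollary~\ref{co:Jacobson} the representation $\rho:A\to\End_\Delta(V)$ is dense, and since $A$ has minimal right ideals it is in fact all of the finite-rank part; in particular $\rho$ is faithful because $A$ is prime (a nonzero kernel would be an ideal annihilating $V$, hence $VAV$-trivial). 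The point is to manufacture the hermitian form $h$ on $V$ from the superinvolution. In case (i) of Theorem~\ref{th:primesupercases} we have $e^*=e$; the recipe is the classical one: fix the idempotent, use $e^*=e$ to identify $eA$ with $(Ae)^*$, and define $h(x,y)\in eAe=\Delta$ by $h(x,y)=xy^*|_{eAe}$ (more precisely $h(ea,eb)=ea b^* e$, using $e^*=e$). One checks $h$ is biadditive, $\Delta$-sesquilinear with the sign twist dictated by $\delta e=0$, that $h$ has degree $l=0$ (since $e$ is even), that it is $\epsilon$-hermitian with $\epsilon=+1$, and nondegenerate (by primeness). The identity $h(xf,y)=(-1)^{fx}h(x,yf^*)$ for $f\in\rho(A)$ is then exactly the statement that $\rho(a^*)=\rho(a)^*$ in $\calL(V)$, and $\rho(A)\supseteq h_{V,V}$ because $h_{V,V}$ corresponds to the socle, which is contained in any dense subalgebra meeting every minimal right ideal. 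Since $e$ is even, $V_0=eA_0\ne 0$. That is situation (i) of the present theorem.

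For case (ii) of Theorem~\ref{th:primesupercases} one has $A_1=0$ and the extra data $u\in eAe^*$, $v\in e^*Ae$ with $u,v\in S(A,*)$, $uv=e$, $vu=e^*$, and $eAe$ a field $F$. Here the map $x\mapsto xu$ (or a suitable twist by $u$) lets us transport $y^*$ back into $eA$, and the resulting form $h(x,y)$ picks up an extra sign from $u^*=-u$, so $\epsilon=-1$; since $A_1=0$ everything is concentrated in degree $0$, $\Delta=F$ is a field, $-$ is the identity, and we get an alternating bilinear form — situation (ii). For case (iii) of Theorem~\ref{th:primesupercases} one has $eA_0e^*=0$ and the odd elements $u\in eA_1e^*$ with $u^*=u$, $v\in e^*A_1e$ with $v^*=-v$, $uv=e$, $vu=e^*$. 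Twisting by the \emph{odd} element $u$ shifts the degree of the form by $1$, so $h$ has degree $l=1$; since $eA_0e^*=0$ forces $\Delta_1=0$ (as shown inside the proof of Theorem~\ref{th:primesupercases}), $\Delta=D$ is an ordinary division algebra with an ordinary involution, and because $u^*=u$ (no sign) we get $\epsilon=+1$, an odd hermitian form — situation (iii). The three cases of Theorem~\ref{th:primesupercases} being mutually exclusive, exactly one of (i), (ii), (iii) holds.

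The converse direction is more conceptual and is where the real content lies. Given the data $(\Delta,-)$, $V$, $h$, and $\rho$ with $\rho(A)\supseteq h_{V,V}$ a $*$-closed subalgebra of $\calL(V)$, Corollary~\ref{co:hVVsimple} already tells us $h_{V,V}$ is the unique simple ideal of $\rho(A)$ and that $\rho(A)$ (hence $A$, via the faithful $\rho$) is prime. Lemma~\ref{le:hvV} exhibits $h_{v,V}$ as a minimal right ideal for any nonzero homogeneous $v$, so $A$ has minimal right ideals. Finally $A$ is endowed with a superinvolution by hypothesis, namely $a\mapsto \rho^{-1}(\rho(a)^*)$ — one must check $\rho(a)^*\in\rho(A)$, which is exactly the assumed $*$-closedness of $\rho(A)$, and that $*$ is a superantiautomorphism of square $1$, which it inherits from the superinvolution of $\calL(V)$ established when that algebra was introduced (the computation with $(h_{v,w})^*=\epsilon(-1)^{vw}h_{w,v}$ right before the statement). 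I expect the main obstacle to be the sign bookkeeping in the forward direction: getting the parity of the twisting element ($e$ in case (i), $u$ in cases (ii) and (iii)) to match the degree $l$ of $h$, and getting $u^*=\pm u$ to translate correctly into $\epsilon=\pm 1$ while simultaneously keeping the sesquilinearity identity $h(y,x)=\epsilon(-1)^{xy}\overline{h(x,y)}$ consistent with $\rho(a^*)=\rho(a)^*$. Everything else is a routine, if lengthy, verification.
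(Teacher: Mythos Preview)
Your proposal is correct and follows essentially the same route as the paper: reduce the forward direction to the trichotomy of Theorem~\ref{th:primesupercases}, realize $V=eA$ and $\Delta=eAe$, build $h$ from $*$ with the appropriate twist, and for the converse invoke Corollary~\ref{co:hVVsimple} and Lemma~\ref{le:hvV}. One small correction: in cases (ii) and (iii) the twisting element should be $v\in e^*Ae$, not $u\in eAe^*$, since $xy^*\in eAe^*$ and you need to land in $\Delta=eAe$; the paper's formula is $h(x,y)=xy^*v$, and the involution on $\Delta$ in case (iii) is $\bar d=ud^*v$. Also, the paper handles mutual exclusivity in the converse by explicitly verifying that each model (i)--(iii) satisfies the corresponding hypothesis of Theorem~\ref{th:primesupercases} (choosing a suitable $e=h_{v,w}$ and checking $e^*=e$, $eH(A,*)e^*=0$, or $eA_0e^*=0$ respectively); your appeal to the forward-direction exclusivity is logically equivalent but leaves that identification implicit.
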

\begin{proof}
Let $A$ be a prime superalgebra with minimal right ideals, and let
$*$ be a superinvolution of $A$. According to Theorem
\ref{th:primesupercases} three possible situations happen:

\smallskip

\noindent\textbf{(i)} There exists a primitive idempotent $e$ with
$e^*=e$. In this case, let $\Delta=eAe$, which is a division
superalgebra with involution $-$ given by the restriction of $*$,
let $V=eA$, let $h:V\times V\rightarrow \Delta$ given by
$h(x,y)=xy^*$ for any $x,y\in V$, and let $\rho:A\rightarrow
\End_\Delta(V)$ be the map given by $x\mapsto R_x$ (the right
multiplication by $x$). It is clear that $h$ is an even
nondegenerate hermitian form. Note that for homogeneous $x=ea$,
$y=eb$ and $z=ec$ in $eA$ ($a,b,c\in A$),
$zh_{x,y}=h(z,x)y=zx^*y=za^*eb$, so $h_{V,V}=R_{AeA}$, which is
obviously contained in $R_A$. Hence, all the conditions in (i) are
satisfied.

\smallskip

\noindent\textbf{(ii)} $A_1=0$ and there exists a primitive
idempotent such that $eH(A,*)e^*=0$. In this case $eAe$ is a field
and there are elements $u\in eAe^*$ and $v\in e^*Ae$ such that
$u,v\in S(A,*)$, $uv=e$ and $vu=e^*$ hold. Consider here the field
$F=eAe$, let $V=eA$, and let $h:V\times V\rightarrow F$ given by
$h(x,y)=xy^*v$. Since $eH(A,*)e^*=0$, for any $x\in V$
$h(x,x)=(ex)(ex)^*v=exx^*e^*v\in eH(A,*)e^*v=0$, so $h$ is
alternating, and nondegenerate by primeness. With $\rho(x)=R_x$ as
before, the conditions in (ii) are satisfied.

\smallskip

\noindent\textbf{(iii)} There exists a primitive idempotent such
that $eA_0 e^*=0$ and two elements $u\in eA_1 e^*$ with $u^*=u$ and
$v\in e^*A_1 e$ with $v^*=-v$, such that $uv=e$ and $vu=e^*$. The
proof of Theorem \ref{th:primesupercases} shows that $eA_1 e=0$, so
consider the division algebra $D=eAe=eA_0 e$, the left vector space
$V=eA$ and the map $h:V\times V\rightarrow D$ given by
$h(x,y)=xy^*v$. Again, the proof of Theorem \ref{th:primesupercases}
shows that the map $-:D\rightarrow D$ given by $\bar d=ud^*v$ is an
involution, and then $h$ becomes an odd nondegenerate hermitian
form, because for homogeneous elements $x,y\in V$,
\[
\begin{split}
h(y,x)&=yx^*v=eyx^*v=u(vyx^*)v\\
  &=(-1)^{vy+vx+xy}u(xy^*v^*)v\\
  &=-(-1)^{x+y+xy}u(xy^*v)v\quad\text{(as $v^*=-v$ and $v$ is odd)}\\
  &=(-1)^{xy}u\overline{h(x,y)}
\end{split}
\]
as $h(x,y)=0=h(y,x)$ unless $x$ and $y$ have different parity. The
conditions in (iii) are satisfied here.

\medskip

\noindent Conversely, identifying $A$ with $\rho(A)$ in all the
cases, $A$ is a prime superalgebra with superinvolution because of
Corollary \ref{co:hVVsimple}. Now for any nonzero homogeneous
element $v\in V$, $I=h_{v,V}$ is a minimal right ideal of $A$ (Lemma
\ref{le:hvV}) generated by the primitive idempotent $e=h_{v,w}$,
where $w$ is an homogeneous element with $h(v,w)=1$, which satisfies
$e^*=(-1)^{vw}h_{w,v}$ (see the paragraph after Lemma \ref{le:hvV}).

\noindent Under the conditions of item (i), there is a nonzero
element $v\in V_0$ such that $h(v,v)\ne 0$, because otherwise, for
any $v,w\in V_0$, $h(v,w)=-h(w,v)=-\overline{h(v,w)}$, so the
restriction of the superinvolution $-$ to $\Delta_0$ would be minus
the identity, which is impossible. Now, if $v\in V_0$ and
$h(v,v)=d\ne 0$, then $h(w,v)=1$ with $w=d^{-1}v$. Besides $\bar
d=\overline{h(v,v)}=h(v,v)=d$, so with $e=h_{v,w}$,
$e^*=h_{v,w}^*=h_{w,v}:x\mapsto
h(x,d^{-1}v)v=h(x,v)d^{-1}v=xh_{v,w}$, and $e^*=e$. Thus $A$
satisfies the hypotheses of Theorem \ref{th:primesupercases}(i).

\noindent Under the conditions of item (ii), let $e=h_{v,w}$ with
$h(v,w)=1$, so $e^*=h_{w,v}$. Now, for any $f\in H(A,*)$,
\[
efe^*=h_{v,w}fh_{w,v}=h_{v,wf}h_{w,v}=h_{v,wfh_{w,v}}=h_{v,h(wf,w)v}=0,
\]
as $h(wf,w)=h(w,wf^*=h(w,wf)=-h(wf,w)$, since $h$ is alternating and
$f\in H(A,*)$. Thus $A$ satisfies the hypotheses of Theorem
\ref{th:primesupercases}(ii).

\noindent Finally, under the conditions of item (iii), take $v\in
V_0$ and $w\in V_1$ such that $h(v,w)=1$, and take the primitive
idempotent $e=h_{v,w}$. Then, for any $f\in\calL(V)_0$, $
efe^*=h_{v,h(wf,w)v}=0$, as $h(w,wf)\in h(V_1,V_1)=0$. Hence
$eAe^*=0$ and thus $A$ satisfies the hypotheses of Theorem
\ref{th:primesupercases}(iii).

\noindent Since the three possibilities in Theorem
\ref{th:primesupercases} are mutually exclusive, the same is valid
here.
\end{proof}

\begin{remark}\label{re:alternatingskewhermitian}
If the condition of $V_0\ne 0$ is omitted in item (i) of Theorem
\ref{th:primesuperinvolhermitian}, then item (i) would include the
situation of item (ii), as any alternating bilinear form on a vector
space $V$ over a field is an hermitian even form on the vector
superspace $V=V_1$.
\end{remark}

Because of Jacobson's Density (Corollary \ref{co:Jacobson}), if $A$
is a finite dimensional associative superalgebra and $V$ is an
irreducible right module for $A$, then $V$ is finite dimensional and
$A$ is isomorphic to $\End_\Delta(V)$, where $\Delta=\End_A(V)$ is a
finite dimensional division superalgebra. Hence:

\begin{corollary}\label{co:simplesuperinvolution} Let $A$ be a
finite dimensional simple superalgebra with $A_1\ne 0$, and let $*$
be a superinvolution of $A$. Then  one and only one of the following
situations occurs:
\begin{romanenumerate}
\item
There exists a finite dimensional division superalgebra with a
superinvolution $(\Delta,-)$, a finite dimensional left
$\Delta$-module $V$ with $V_0\ne 0$ endowed with a nondegenerate
hermitian even superform $h:V\times V \rightarrow \Delta$, and an
isomorphism of superalgebras with superinvolution $\rho:A\rightarrow
\End_{\Delta}(V)$ (the superinvolution on $\End_\Delta(V)$ is the
adjoint relative to the superform).
\item
 There exists a finite dimensional division algebra with an
involution $(D,-)$, a finite dimensional left $\bZ_2$-graded vector
space $V$ endowed with a nondegenerate hermitian odd form $h:V\times
V\rightarrow D$ , and an isomorphism of superalgebras with
superinvolution $\rho:A\rightarrow \End_D(V)$.
\end{romanenumerate}
\noindent Conversely, any such superalgebra is simple and it is
endowed with a superinvolution.
\end{corollary}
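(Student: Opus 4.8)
The plan is to deduce this from Theorem~\ref{th:primesuperinvolhermitian} together with the finite-dimensional form of Jacobson's density recalled immediately before the statement. A finite dimensional simple superalgebra is prime and, being Artinian, contains minimal right ideals, so Theorem~\ref{th:primesuperinvolhermitian} applies to $(A,*)$ and places us in one of its three cases. Since $A_1\ne 0$, case~(ii) there (where $A_1=0$) is excluded, leaving case~(i) or case~(iii). In both of these the module produced is $V=eA$ for a primitive idempotent $e$, with form $h(x,y)=xy^*$ (resp.\ $h(x,y)=xy^*v$), division superalgebra $\Delta=eAe$ (concentrated in degree $0$ in case~(iii), i.e.\ an ordinary division algebra $D$ with involution), and $\rho$ the right multiplication map $R\colon A\to\End_\Delta(V)$.

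The next step is to see that $\rho$ is onto. By Lemma~\ref{le:Brauer}(iii), $V=eA$ is a minimal right ideal, hence an irreducible right $A$-module, and by Lemma~\ref{le:Brauer}(ii) its centralizer is $\End_A(V)\cong eAe=\Delta$. Invoking the finite-dimensional Jacobson density statement recalled before the Corollary, $V$ and $\Delta$ are finite dimensional over $F$ and $R\colon A\to\End_\Delta(V)$ is an isomorphism; since $\rho(a^*)=\rho(a)^*$, it is an isomorphism of superalgebras with superinvolution, the target carrying the adjoint superinvolution relative to $h$. Thus case~(i) of Theorem~\ref{th:primesuperinvolhermitian} yields case~(i) of the Corollary (here $\Delta$ is a finite dimensional division superalgebra with superinvolution, $h$ an even nondegenerate hermitian superform, $V_0\ne 0$), and case~(iii) yields case~(ii) (here $\Delta=D$ is a finite dimensional division algebra with involution and $h$ an odd nondegenerate hermitian form). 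Mutual exclusivity of these two alternatives is inherited from that of the cases of Theorem~\ref{th:primesuperinvolhermitian}.

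For the converse, I would start from the data in~(i) and argue identically for~(ii) (replacing $\Delta$ by $D$ and the even form by the odd one). Then $A:=\End_\Delta(V)$ is finite dimensional over $F$ because $\Delta$ and $V$ are. Since $h$ is nondegenerate and $V$ is finite dimensional, every $f\in\End_\Delta(V)$ admits an adjoint $f^*$, so $\calL(V)=\End_\Delta(V)$ and $f\mapsto f^*$ is a superinvolution of $A$. Choosing a homogeneous $\Delta$-basis of $V$ and using nondegeneracy of $h$ to exhibit the corresponding dual vectors, one checks that every $\Delta$-linear endomorphism is a $\Delta$-combination of the maps $h_{v,w}$, i.e.\ $h_{V,V}=\End_\Delta(V)=A$; by Corollary~\ref{co:hVVsimple}, $h_{V,V}$ is simple, so $A$ is simple, completing the proof.

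I do not expect a serious obstacle: the content sits entirely in Theorem~\ref{th:primesuperinvolhermitian}, and the only genuine point is that in the finite-dimensional setting the faithful representation there is automatically surjective (a double-centralizer/graded-density argument). The remaining work is bookkeeping: matching the data $(\Delta,V,h,\rho)$ of Theorem~\ref{th:primesuperinvolhermitian} with the data of the Corollary, and checking that $h_{V,V}$ exhausts $\End_\Delta(V)$ when $V$ is finite dimensional.
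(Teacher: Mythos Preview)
Your proposal is correct and follows the same line as the paper: the paper's ``proof'' is just the sentence preceding the Corollary, which records that by Jacobson density a finite dimensional simple superalgebra acting on an irreducible module $V$ is all of $\End_\Delta(V)$, and then writes ``Hence:''. You have simply unpacked this---noting that simplicity and finite dimension give primeness and minimal right ideals, that $A_1\ne 0$ rules out case~(ii) of Theorem~\ref{th:primesuperinvolhermitian}, and that density upgrades the faithful $\rho$ to an isomorphism---which is exactly the intended argument.
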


Notice that if $A$ is a CSS and it is isomorphic to $\End_\Delta(V)$
for some finite dimensional division superalgebra $\Delta$ and a
left vector space $V$ over $\Delta$, then the classes of $A$ and
$\Delta$ in the Brauer-Wall group coincide.

\section{Superinvolutions of the first kind}
\noindent In these sections all superantiautomorphisms and all
superinvolutions are $F$-linear.
\subsection{The odd case}

First of all, we consider the quadratic graded algebras.

\begin{lemma}\label{quadrsanti}
A quadratic graded algebra over a field $F$ has a
superantiautomorphism if and only if $-1$ is a square in $F$.
\end{lemma}

\begin{proof}
Let $K:=F \oplus Fu$, $u^2=a \in F^{\times}$, $\delta u=1$. We suppose that
there is an element $s \in F$ with $s^2=-1$. For $x:=\alpha1+\beta u
\in K$ ($\alpha, \beta \in F$) we define $\sigma(x)=\alpha1+\beta s
u$. The induced map $\sigma$ is a
superantiautomorphism of $K$.\\
On the other hand, if $\sigma$ is a superantiautomorphism, then for
all $\alpha, \beta \in F$ we have $\sigma(\alpha1+\beta
u)=\alpha1+\beta \lambda u$. The relation $\sigma(u^2)=-\sigma(u)
\sigma(u)$ implies that $\lambda^2=-1$.
\end{proof}

\begin{lemma}\label{quadrsinv}
Quadratic graded algebras do not posses superinvolutions.
\end{lemma}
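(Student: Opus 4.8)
The plan is to read off a contradiction directly from the description of superantiautomorphisms of a quadratic graded algebra that underlies Lemma~\ref{quadrsanti}, so this will be a short refinement of that proof. Write the quadratic graded algebra as $K=F\oplus Fu$ with $u^2=a\in F^{\times}$ and $\delta u=1$, and suppose, for contradiction, that $\tau$ is a superinvolution of $K$. Since $\tau$ is $F$-linear, graded, and bijective, it must map the odd part $Fu$ onto itself, so $\tau(u)=\lambda u$ for some $\lambda\in F^{\times}$. Being a superantiautomorphism, $\tau$ fixes $1$ (from $\tau(1)\tau(x)=\tau(x)$ for all $x$ together with the bijectivity of $\tau$), hence it fixes $F$ pointwise.

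Next I would apply $\tau$ to the relation $u^2=a$. On one hand $\tau(u^2)=\tau(a)=a$; on the other hand $\tau(u\cdot u)=(-1)^{1\cdot 1}\tau(u)\tau(u)=-\lambda^2u^2=-\lambda^2a$. As $a\neq 0$, this forces $\lambda^2=-1$, recovering the conclusion of Lemma~\ref{quadrsanti}. Finally, using $\tau^2=\mathrm{id}$ and $F$-linearity,
\[
u=\tau^2(u)=\tau(\lambda u)=\lambda\,\tau(u)=\lambda^2 u=-u,
\]
so $2u=0$, which contradicts $u\neq 0$ since $\charac F\neq 2$. Therefore $K$ has no superinvolution.

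There is essentially no obstacle here: the whole argument is a direct strengthening of the computation in the proof of Lemma~\ref{quadrsanti}, the only point worth isolating being that an $F$-linear superantiautomorphism automatically fixes $F$, which is what licenses pulling the scalar $\lambda$ out of $\tau(\lambda u)$ in the last display.
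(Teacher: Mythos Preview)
Your proof is correct and follows essentially the same approach as the paper's: both arguments write $\tau(u)=\lambda u$ and extract the two incompatible constraints $\lambda^2=-1$ (from the superantiautomorphism identity applied to $u^2$) and $\lambda^2=1$ (from $\tau^2=\mathrm{id}$). The only cosmetic difference is the order in which you use these two facts; the paper first deduces $\lambda=\pm 1$ and then obtains $a=-a$, whereas you first get $\lambda^2=-1$ and then obtain $u=-u$.
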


\begin{proof}
We take a quadratic algebra $K:=F \oplus Fu$, $u^2=a \in F^{\times}$,
$\delta u=1$. If $\tau$ were a superinvolution, then
$\tau(u)=\lambda u$ for some $\lambda \in F^{\times}$. The fact that
$\tau^2(u)=u$ would imply that $\lambda=\pm 1$. Hence
$a=\tau(a)=\tau(u^2)=-(\tau(u))^2=-(\lambda u)^2=-a$, a
contradiction.
\end{proof}

\begin{remark}\label{quadralb}
If a quadratic algebra has a superantiautomorphism $\varphi$, then
$\varphi^2=\nu$, the grading automorphism.
\end{remark}

\noindent Now we can solve the case of algebras of odd type.

\begin{theorem}
Let A be a CSS of odd type over a field $F$. The algebra $A$ has a
superantiautomorphism if and only if the following two conditions
hold:\\
\textup{1.}\quad $-1$ is a square in $F$,\\
\textup{2.}\quad the algebra $A_0$ has an antiautomorphism.
\end{theorem}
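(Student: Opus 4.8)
The plan is to use the structure theorem for CSSs of odd type (Theorem \ref{structureodd}), which gives a graded isomorphism $A \simeq (A_0)\wotimes F\langle\sqrt{a}\rangle$ where $Z(A)=F1+Fz$ with $z^2=a\in F^\times$. Since $F\langle\sqrt{a}\rangle$ is a quadratic graded algebra, this immediately suggests that a superantiautomorphism of $A$ should decompose as (an antiautomorphism of $A_0$) $\otimes$ (a superantiautomorphism of $F\langle\sqrt{a}\rangle$), and Lemma \ref{quadrsanti} tells us the latter exists precisely when $-1$ is a square in $F$.

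First I would prove sufficiency: assuming $-1=s^2$ in $F$ and that $A_0$ has an antiautomorphism $\psi$, I would use the isomorphism $A\simeq (A_0)\otimes F\langle\sqrt{a}\rangle$ together with the superantiautomorphism $\sigma$ of $F\langle\sqrt{a}\rangle$ from Lemma \ref{quadrsanti}, and check that $\psi\otimes\sigma$ is a superantiautomorphism of $A$. Here one must be slightly careful about the sign rule: on $A_0\otimes 1$ the map is just $\psi$ (an ordinary antiautomorphism), and on $A_0\otimes u$ (the odd part) one gets $\psi(\cdot)\otimes su$; verifying the defining identity $\sigma(a_\alpha b_\beta)=(-1)^{\alpha\beta}\sigma(b_\beta)\sigma(a_\alpha)$ reduces to the four cases according to parities and uses $u^2=a\in F$ central together with $s^2=-1$. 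This is the routine computation underlying Remark \ref{quadralb}.

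For necessity, suppose $\varphi$ is a superantiautomorphism of $A$. Restricting $\varphi$ to the even part $A_0$ gives an antiautomorphism of $A_0$ (it is $F$-linear, bijective, reverses products with sign $+1$ on even elements), so condition 2 holds. For condition 1: the graded center $\widehat Z(A)=F$ is preserved, and $\varphi$ must send $Z(A)=F1+Fz$ to itself, hence $\varphi(z)=\lambda z$ for some $\lambda\in F^\times$. The key relation is that $z$ is odd and $z^2=a\in F^\times$, so applying $\varphi$ to $z^2=a$ and using the superantiautomorphism sign for two odd elements gives $a=\varphi(a)=\varphi(z\cdot z)=(-1)^{1\cdot 1}\varphi(z)\varphi(z)=-\lambda^2 a$, forcing $\lambda^2=-1$; thus $-1$ is a square in $F$. (Alternatively, this is just Lemma \ref{quadrsanti} applied to the subalgebra $Z(A)\simeq F\langle\sqrt a\rangle$, since $\varphi$ restricts to a superantiautomorphism of it — note $F\langle\sqrt a\rangle$ is commutative so a superantiautomorphism of $A$ restricted to a graded central subalgebra where it acts is automatically one there.)

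The main obstacle is the necessity direction's bookkeeping for condition 2 when $a\in F^{\times 2}$: in that case $Z(A)\simeq F\times F$ and $A\simeq A_0\times A_0$ as ungraded algebras, so one should check that $\varphi$ does not swap the two factors in a way that obstructs descending to $A_0$ — but since $\varphi$ is \emph{graded}, it fixes the idempotents $\frac12(1\pm a^{-1}z^2)$... wait, those are trivial; rather one argues directly on $A_0$ as above, which needs nothing about $a$. So in fact the restriction-to-$A_0$ argument is uniform, and the only genuinely delicate point is confirming that the element $\lambda$ produced in the center argument indeed lies in $F$ (not merely in $Z(A)$), which follows because $\varphi(z)$ is odd and $A_1=A_0 z$ with $A_0\cap Fz$ handled by the structure theorem; I expect this to take only a line once the isomorphism $A\simeq (A_0)\otimes F\langle\sqrt a\rangle$ is invoked.
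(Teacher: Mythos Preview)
Your proposal is correct and follows essentially the same approach as the paper: for necessity, restrict the superantiautomorphism to $A_0$ (giving condition~2) and to $Z(A)\simeq F\langle\sqrt a\rangle$ (giving condition~1 via Lemma~\ref{quadrsanti}); for sufficiency, tensor an antiautomorphism of $A_0$ with the superantiautomorphism of $Z(A)$ furnished by Lemma~\ref{quadrsanti} using the decomposition $A\simeq (A_0)\otimes Z(A)$. Your final worries are unnecessary: $\varphi(z)\in Z(A)\cap A_1=Fz$ forces $\lambda\in F$ immediately, and the restriction-to-$A_0$ argument is indeed uniform and needs no case split on whether $a\in F^{\times 2}$.
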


\begin{proof}
If $A$ has a superantiautomorphism $\sigma$, then $\sigma\mid_{A_0}$
is an antiautomorphism of $A$. Moreover, the restriction
$\sigma\mid_{Z(A)}$ is a superantiautomorphism on the quadratic
graded algebra $Z(A)$. Hence Lemma \ref{quadrsanti} implies that
$-1$ is a square
in $F$. \\
Conversely, if $-1$ is a square in $F$ then, again by Proposition
\ref{quadrsanti}, there is a superantiautomorphism $\sigma_1$ on
$Z(A)$. If $\sigma_2$ is an antiautomorphism of $A_0$, then
$\sigma_1 \otimes \sigma_2$ is a superantiautomorphism of $A\simeq
(A_0)\otimes Z(A)$.
\end{proof}

\begin{theorem}\label{th:oddfirstkind}
Let A be a CSS of odd type. The algebra $A$ does not possess
superinvolutions of the first kind.
\end{theorem}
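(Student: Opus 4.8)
The plan is to show that an $F$-linear superinvolution of $A$ would restrict to a superinvolution of the center $Z(A)$, which by Theorem \ref{structureodd} is a quadratic graded algebra, thereby contradicting Lemma \ref{quadrsinv}.

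First I would invoke Theorem \ref{structureodd} to write $Z(A)=F\oplus Fz$ with $z\in Z_1\subseteq A_1$ and $z^2=a\in F^\times$, so that $Z(A)$ is a quadratic graded algebra. Assume, for contradiction, that $\tau$ is a superinvolution of the first kind on $A$.

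The one point requiring an argument is that $\tau$ maps $Z(A)$ into itself. Since $z$ lies in the ordinary center of $A$, we have $za=az$ for every homogeneous $a\in A$; applying $\tau$ and using that it is a superantiautomorphism gives $(-1)^{\delta a}\tau(a)\tau(z)=\tau(za)=\tau(az)=(-1)^{\delta a}\tau(z)\tau(a)$, so that $\tau(a)\tau(z)=\tau(z)\tau(a)$ for all homogeneous $a$ (the Koszul signs on the two sides coincide). By surjectivity of $\tau$ this means $\tau(z)$ centralizes $A$, i.e.\ $\tau(z)\in Z(A)$; and since $\tau$ is graded, $\tau(z)\in Z(A)_1=Fz$. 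Hence $\tau$ restricts to an $F$-linear superantiautomorphism of $Z(A)$ whose square is the identity, that is, to a superinvolution of the quadratic graded algebra $Z(A)$, which is impossible by Lemma \ref{quadrsinv}. (Equivalently, with no appeal to that lemma: writing $\tau(z)=\lambda z$ with $\lambda\in F^\times$, the relation $\tau^2=\mathrm{id}$ forces $\lambda^2=1$, while $F$-linearity together with $\tau(z^2)=-\tau(z)^2$ gives $a=\tau(a)=-\lambda^2a=-a$, contradicting $\charac F\ne 2$ and $a\ne 0$.)

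I do not expect any genuine obstacle: everything reduces to the elementary observation that a superantiautomorphism preserves $Z(A)$, which follows immediately from the centrality of $z$ and the sign bookkeeping above, after which the argument is just the proof of Lemma \ref{quadrsinv} applied to $Z(A)$.
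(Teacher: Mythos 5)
Your proposal is correct and follows exactly the paper's argument: restrict the superinvolution to the center $Z(A)$, which by Theorem \ref{structureodd} is a quadratic graded algebra, and invoke Lemma \ref{quadrsinv}. The only difference is that you spell out the (routine but worthwhile) verification that a graded superantiautomorphism preserves $Z(A)$, a point the paper leaves implicit.
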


\begin{proof}
If $\tau$ were a superinvolution of $A$, then $\tau\mid_{Z(A)}$
would be a superinvolution on the quadratic graded algebra $Z(A)$, a
contradiction with Lemma \ref{quadrsinv}.
\end{proof}

\subsection{The even case}
\subsubsection{\bf The case $Z(A_0)\simeq F \times F$}
Let $A$ be a CSS of even type with $Z(A_0)\simeq F \times F$. Then
$A\simeq M_{n+m}(F) \wotimes (D)$, for a division algebra $D$. If
$D=F$, we are in the case of Proposition \ref{splitsuper}: if $A\sim
M_{n+m}(F)$, $n,m$ odd and $n \neq m$ there is a
superantiautomorphism but no superinvolution; otherwise $A$ has a
superantiautomorphism if and only if
it has a superinvolution.\\
It remains to consider the case where $D\neq F$.

\begin{theorem}
Let $D\neq F$. Then the following assertions are
equivalent:\begin{romanenumerate}
\item
$A$ has a superantiautomorphism
\item
$A$ has a superinvolution
\item
$D$ has an involution
\end{romanenumerate}
\end{theorem}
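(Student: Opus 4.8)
The plan is to prove the cycle $\mathrm{(ii)}\Rightarrow\mathrm{(i)}\Rightarrow\mathrm{(iii)}\Rightarrow\mathrm{(ii)}$. The first implication is immediate, since a superinvolution is in particular a superantiautomorphism. For $\mathrm{(i)}\Rightarrow\mathrm{(iii)}$, I would use the structure of $A_0$: by Theorem \ref{structureeven} one has $A_0\simeq M_n(D)\times M_m(D)$, whose only minimal two-sided ideals are the two factors. A superantiautomorphism $\sigma$ of $A$, being graded, restricts to an $F$-linear antiautomorphism of $A_0$, which must permute these two ideals; whether it fixes each of them or interchanges them, one obtains an isomorphism of $F$-algebras between a matrix algebra over $D$ and a matrix algebra over $D^{op}$. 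Since $D$ and $D^{op}$ are central division algebras over $F$, the uniqueness part of Wedderburn's Theorem yields $D\simeq D^{op}$ as $F$-algebras, so the class of $D$ has order at most two in the Brauer group of $F$, and Albert's Theorem (see \cite{alb,scharl}) provides an involution of the first kind on $D$.

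For $\mathrm{(iii)}\Rightarrow\mathrm{(ii)}$, let $d\mapsto\bar d$ be an involution of $D$. By Theorem \ref{structureeven} (see also Remark \ref{superdiv}) I would identify $A$ with $\End_{(D)}(V)$, where $(D)$ is $D$ with the trivial grading and $V=V_0\oplus V_1$ is a graded left $D$-module with $\dim_D V_0=n\geq 1$ and $\dim_D V_1=m$, and regard $d\mapsto\bar d$ as a superinvolution of the division superalgebra $(D)$. The key point is that, since $D\neq F$, this involution is not the identity (otherwise $ab=\overline{ab}=\bar b\,\bar a=ba$ for all $a,b$, forcing $D$ commutative, hence $D=F$), so choosing $d_0$ with $\bar d_0\neq d_0$ the element $\delta:=d_0-\bar d_0$ is a nonzero, hence invertible, element of $D$ with $\bar\delta=-\delta$. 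Fixing $D$-bases of $V_0$ and $V_1$, I would then define $h:V\times V\to D$ by $h(V_0,V_1)=h(V_1,V_0)=0$, $h\bigl((x_i),(y_i)\bigr)=\sum_i x_i\bar y_i$ on $V_0$, and $h\bigl((x_j),(y_j)\bigr)=\sum_j x_j\delta\bar y_j$ on $V_1$. A direct check shows that $h$ is a nondegenerate even hermitian superform relative to $d\mapsto\bar d$: left $D$-linearity in the first argument and nondegeneracy are clear, hermitian symmetry holds on $V_0$ because $\bar 1=1$, and on $V_1$ the factor $\delta$ with $\bar\delta=-\delta$ is exactly what absorbs the sign $(-1)^{xy}=-1$ for odd $x,y$. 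By Corollary \ref{co:simplesuperinvolution}(i), $A\simeq\End_{(D)}(V)$ then carries a superinvolution, and it is of the first kind since $h$ is $F$-bilinear.

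The step I expect to require the most thought is $\mathrm{(iii)}\Rightarrow\mathrm{(ii)}$: everything reduces to realizing that the obstruction of the split case (Proposition \ref{splitsuper}) — that an odd component of odd dimension over $F$ admits no alternating form from which to build the adjoint superinvolution — vanishes as soon as $D\neq F$, because then every involution of $D$ has nonzero skew part and a diagonal skew-hermitian form on $V_1$ exists in any dimension. The remaining points, namely checking that $h$ obeys the axioms of an even hermitian superform and that the superalgebra $\End_{(D)}(V)$ produced this way is graded-isomorphic to $A$ with even part matching $A_0\simeq M_n(D)\times M_m(D)$, are routine.
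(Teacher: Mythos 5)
Your proposal is correct, and its implication \textup{(iii)}$\Rightarrow$\textup{(ii)} is essentially the paper's own argument: both identify $A$ with $\End_D(V)$ for the graded module $V=D^n\oplus D^m$, note that $D\neq F$ forces the involution of $D$ to have a nonzero skew element (your $\delta$, the paper's $d$), and take the diagonal superform with entries $1$ on the even basis vectors and the skew element on the odd ones, invoking Corollary \ref{co:simplesuperinvolution} for the adjoint superinvolution; the sign check you sketch is exactly why the skew entry is needed. Where you genuinely diverge is \textup{(i)}$\Rightarrow$\textup{(iii)}. The paper proves \textup{(i)}$\Leftrightarrow$\textup{(iii)} in one stroke in the Brauer--Wall group: a superantiautomorphism gives $A\wotimes A\sim 1$ in $BW(F)$, hence $(D)\wotimes (D)\sim 1$, and since the grading on $D$ is trivial this amounts to $D\otimes D\sim 1$ in $B(F)$, which by the classical Albert Theorem is equivalent to $D$ having an involution. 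You instead restrict the graded $F$-linear superantiautomorphism to $A_0\simeq M_n(D)\times M_m(D)$, observe that it permutes the two minimal two-sided ideals, and in either case (fixing or swapping them) obtain an isomorphism of a matrix algebra over $D$ with one over $D^{op}$, whence $D\simeq D^{op}$ by Wedderburn uniqueness and an involution on $D$ by Albert. Your route is more elementary and self-contained, as it bypasses the assertion (stated without proof in the paper) that existence of a superantiautomorphism is equivalent to $[A]$ having order at most $2$ in $BW(F)$; the price is that you only get the forward implication \textup{(i)}$\Rightarrow$\textup{(iii)}, which costs nothing since your cycle \textup{(ii)}$\Rightarrow$\textup{(i)}$\Rightarrow$\textup{(iii)}$\Rightarrow$\textup{(ii)} closes the equivalence, whereas the paper's computation delivers both directions of \textup{(i)}$\Leftrightarrow$\textup{(iii)} at once (the extra direction being redundant given \textup{(iii)}$\Rightarrow$\textup{(ii)}$\Rightarrow$\textup{(i)}).
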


\begin{proof}
The superalgebra $A$ has a superantiautomorphism if and only if $A
\wotimes A \sim 1$ in $BW(F)$. Hence $(D) \wotimes M_{n+m}(F)
\wotimes M_{n+m}(F) \wotimes (D) \sim 1$ in $BW(F)$, also $(D)
\wotimes (D) \sim 1$ in $BW(F)$. But since the grading on $D$ is
trivial, this is equivalent with $D \otimes D \sim 1$ in $B(F)$. By
the classical Albert's Theorem, this is equivalent with the fact
that $D$ possess an involution. Hence (i)
and (iii) are equivalent.\\
To prove that (iii) implies (ii), suppose that $\sigma$ is an
involution on $D$. Let $V:=D^{n+m}$. If $e_1,\dots,e_{n+m}$ is the
canonical basis of $D^{n+m}$, we define a grading on $V$ by setting
$e_1,\dots,e_n \in V_0$ and $e_{n+1},\dots,e_m \in V_1$. Clearly, $A
\simeq \End_D(V)$. The fact that $D \neq F$ implies that $\{d \in D
\mid \sigma(d)=-d \}\neq \{0\}$ (if $\sigma$ were the identity on
$D$, then the central algebra $D$ would coincide with $F$). Let
$d\in D$ with $\sigma(d)=-d\neq 0$. The map $h:V\times V
\longrightarrow D$, given by $h(e_i,e_i)=1$ for all $i=1,\dots, n$,
$h(e_j,e_j)=d$ for all $j=n+1,\dots, n+m$, and $h(e_i,e_j)=0$ for
$i\ne j$, defines a superhermitian form and therefore there is an
associated superinvolution (see Corollary
\ref{co:simplesuperinvolution}). Therefore $(iii)$ implies
$(ii)$.\\
The implication $(ii)\Rightarrow (i)$ is trivial.\\
\end{proof}
\subsubsection{ \bf The case $Z(A_0)$ is a field}
We study CDS in relation with superinvolutions. We begin with graded
quaternion algebras.


\begin{lemma}\label{lemmaquatinv}
Let $Q=\langle a,b \rangle$ be a CDS. Then it has no superinvolution
(of the first kind).
\end{lemma}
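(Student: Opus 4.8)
The plan is to show that the assumption of a first‑kind superinvolution on $Q=\langle a,b\rangle$ leads to a contradiction by looking carefully at what such a superinvolution must do on the odd part $Q_1$. Recall that $\langle a,b\rangle=F\langle\sqrt a\rangle\wotimes F\langle\sqrt b\rangle$, so there are odd generators, say $u,v$, with $u^2=a$, $v^2=b$, $uv=-vu$, and $Q_0=F1\oplus F(uv)$ while $Q_1=Fu\oplus Fv$; here $w:=uv$ satisfies $w^2=-ab$ and $Q_0\cong F(\sqrt{-ab})$ (a field, since $Q$ is a division superalgebra, so $-ab\notin F^{\times2}$).

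First I would note that a first‑kind superinvolution $\tau$ is $F$‑linear and graded, hence it preserves $Q_0$ and $Q_1$ separately; moreover its restriction to $Q_0$ is an $F$‑algebra \emph{anti}automorphism of the commutative field $Q_0$, i.e. an $F$‑automorphism of $Q_0$ with $\tau^2=\mathrm{id}$. Next I would determine $\tau$ on $Q_1$. Since $Q_1=Fu\oplus Fv$ is two‑dimensional, write $\tau(u)=\alpha u+\beta v$, $\tau(v)=\gamma u+\delta v$. Using that $\tau$ is a superantiautomorphism one has, for homogeneous odd $x,y$, $\tau(xy)=-\tau(y)\tau(x)$; applying this to $x=y=u$ gives $\tau(a)=a=\tau(u^2)=-\tau(u)^2$, so $\tau(u)^2=-a$, and similarly $\tau(v)^2=-b$. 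But $\tau(u)^2=(\alpha u+\beta v)^2=\alpha^2 a+\beta^2 b$ (the cross terms $\alpha\beta(uv+vu)=0$), so $\alpha^2 a+\beta^2 b=-a$, and likewise $\gamma^2 a+\delta^2 b=-b$. Then using $uv=-vu=w$ and applying $\tau$ to $uv$: $\tau(w)=\tau(uv)=-\tau(v)\tau(u)$, which I would expand and compare with the known action of $\tau$ on $Q_0$.

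The key point I expect to carry the contradiction is this. The element $w=uv$ spans the ``imaginary'' line of the field $Q_0=F(\sqrt{-ab})$, and on such a quadratic field extension the only $F$‑automorphisms are the identity and the conjugation $w\mapsto -w$. If $\tau(w)=w$, then from $\tau(w)=-\tau(v)\tau(u)$ and the computation of $\tau(v)\tau(u)=(\gamma u+\delta v)(\alpha u+\beta v)=\gamma\alpha a+\delta\beta b+(\gamma\beta-\delta\alpha)w$ (using $uv=w$, $vu=-w$), comparing the $1$‑component forces $\gamma\alpha a+\delta\beta b=0$ and the $w$‑component forces $\delta\alpha-\gamma\beta=1$; together with $\alpha^2 a+\beta^2 b=-a$ and $\gamma^2 a+\delta^2 b=-b$ one gets a system that, upon eliminating variables (multiplying, using $(\alpha\delta-\beta\gamma)^2$), yields an equation exhibiting $-1$, or $-ab$, or some square class as a square in $F$ in a way incompatible with $Q$ being a division superalgebra (equivalently, $\langle a,b\rangle$ being a \emph{division} algebra means the norm form is anisotropic, and the derived relation would make it isotropic). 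If instead $\tau(w)=-w$, then $\tau=\nu$ on $Q_0$; but one can also check directly that $\tau$ must then agree with a superantiautomorphism whose square is $\nu$ (cf.\ Remark~\ref{quadralb} applied to the two quadratic tensor factors) rather than the identity, contradicting $\tau^2=\mathrm{id}$ on the odd part.

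The cleanest route, and the one I would actually write up, is to reduce to the quadratic‑factor case: $\tau$ restricted to the subsuperalgebra $F\langle\sqrt a\rangle=F1\oplus Fu$ need not be a superinvolution of that subalgebra (it need not preserve $Fu$), so instead I would argue on $Q_0$ directly together with the norm. The hard part will be organizing the small linear‑algebra computation over $Q_1$ so that the division (anisotropy) hypothesis is used exactly once and transparently; everything else — $F$‑linearity forcing preservation of $Q_0,Q_1$, $\tau^2=\mathrm{id}$ forcing $\tau|_{Q_0}$ to be an involution of a quadratic field, and the relations $\tau(u)^2=-a$, $\tau(v)^2=-b$ — is routine. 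Once the system $\alpha^2 a+\beta^2 b=-a$, $\gamma^2 a+\delta^2 b=-b$, $\alpha\gamma a+\beta\delta b=0$, $\alpha\delta-\beta\gamma=\pm1$ is on the table, eliminating to produce a nontrivial zero of the quaternionic norm form $\langle 1,-a,-b,ab\rangle$ (contradicting that $Q$ is a division algebra) finishes the proof.
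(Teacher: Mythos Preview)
Your plan has a genuine gap at the very end, and it obscures a one-line argument that you essentially already have in hand.

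You correctly derive the crucial relation: for any odd $x\in Q_1$ one has $\tau(x)^2=-x^2$ (from $\tau(x^2)=-\tau(x)\tau(x)$ together with $x^2\in F$). But then you pass to coordinates and promise an elimination producing a nontrivial zero of the quaternion norm form $\langle 1,-a,-b,ab\rangle$. That endgame is not valid: $Q=\langle a,b\rangle$ being a \emph{division superalgebra} only says that $Q_0\cong F[\sqrt{-ab}]$ is a field and that the binary form $\langle a,b\rangle$ on $Q_1$ is anisotropic; it does \emph{not} imply that the underlying ungraded quaternion algebra is division. For instance $\langle 1,1\rangle$ over $\mathbb{Q}$ is a CDS, yet $(1,1)_{\mathbb{Q}}\cong M_2(\mathbb{Q})$ and $\langle 1,-1,-1,1\rangle$ is isotropic. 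So ``isotropy of the norm form'' is no contradiction here. Your separate handling of the case $\tau(w)=-w$ is also only a sketch and the claimed conclusion ($\tau^2=\nu$) does not follow from $\tau(w)=-w$ alone.

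The paper's proof bypasses all of this by choosing the odd element wisely. Since $\tau^2=\mathrm{id}$ on the $2$-dimensional space $Q_1$, there is a nonzero eigenvector $x\in Q_1$ with $\tau(x)=\pm x$. Then $x^2\in F^\times$ because $Q$ is a CDS, so $\tau(x^2)=x^2$; but also $\tau(x^2)=-\tau(x)^2=-x^2$, whence $x^2=0$, a contradiction. That is the entire argument. In other words, instead of computing $\tau$ on the fixed generators $u,v$, apply your relation $\tau(x)^2=-x^2$ to an eigenvector. (If you insist on your coordinate system, the condition $\tau^2=\mathrm{id}$ on $Q_1$ together with your equations eventually forces $-ab\in F^{\times 2}$, contradicting that $Q_0$ is a field; but this is just the eigenvector argument in disguise, and the contradiction is with $Q_0$ being a field, not with anisotropy of the full norm form.)
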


\begin{proof}
Suppose that $A$ has a superinvolution $*$. Then there is a nonzero
element $u\in Q_1\cap H(Q,*)$ or $u\in Q_1\cap S(Q,*)$, because the
eigenvalues of the linear map $*$ are $\pm 1$ (or also: pick a
nonzero element $x$ in $Q_1$; if $x \notin S(Q,*)$, then take
$x+x^*\in H(Q,*)$ ). Let $u^2=\lambda \in F^{\times}$. Then
$\lambda=u^2=u^*u^*=-(u^2)^*=-(\lambda)^*=-\lambda$, which implies
that $\lambda=0$. Hence $u^2=0$, a contradiction to the fact that
$Q$ is a division superalgebra.
\end{proof}

\begin{lemma}\label{evendivision}
Let $\Delta$ be a CDS of even type, $\Delta_1\neq0$. Then $\Delta$
has no superinvolutions.
\end{lemma}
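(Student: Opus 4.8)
We want to show that a central division superalgebra $\Delta$ of even type with $\Delta_1 \neq 0$ admits no superinvolution of the first kind. The plan is to argue by contradiction: assume $*$ is such a superinvolution and derive a contradiction by locating an element of $\Delta_1$ that squares to zero, contradicting the fact that every nonzero homogeneous element of a CDS is invertible. This mirrors the strategy already used in Lemma \ref{lemmaquatinv} for graded quaternion algebras, and that lemma should in fact drop out as a special case. The key structural input is the description of $\Delta$ as a CSS of even type with $Z(\Delta_0) = F1 + Fz$, $z^2 = a \in F^\times$, $a \notin F^{\times 2}$ (since $\Delta$ is a division superalgebra, we are in case (3) or (4) of Theorem \ref{structureeven}, so necessarily $a\notin F^{\times 2}$), together with the relation $\nu(x) = zxz^{-1}$ and $uz = -zu$ for all $u \in \Delta_1$ from the grading automorphism discussion.

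**Finding a homogeneous eigenvector.** First I would observe, exactly as in Lemma \ref{lemmaquatinv}, that since $*$ is an $F$-linear involutive map on the $F$-vector space $\Delta_1$ (note $*$ is graded, hence preserves $\Delta_1$), it diagonalizes, so $\Delta_1 = (\Delta_1 \cap H(\Delta,*)) \oplus (\Delta_1 \cap S(\Delta,*))$ and at least one summand is nonzero. Pick a nonzero homogeneous odd $u$ with $u^* = \epsilon u$ for $\epsilon = \pm 1$. Then $u^2 \in \Delta_0$, and since $u$ is odd, $(u^2)^* = (-1)^{1\cdot 1} u^* u^* = -\epsilon^2 u^2 = -u^2$. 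So $u^2$ lies in $S(\Delta_0, *)$. Unlike the quaternion case, $u^2$ need not be central in $\Delta$, so I cannot immediately conclude $u^2 = 0$; this is where the argument must do more work, and I expect this to be the main obstacle.

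**Exploiting the center $Z(\Delta_0)$.** The element $w := u^2$ is a nonzero (if $u^2 \neq 0$ we seek a contradiction) even element with $w^* = -w$. Now consider how $*$ interacts with $z$. Since $z$ spans $Z(\Delta_0)$ over $F$ together with $1$, and $*$ restricted to $\Delta_0$ is an antiautomorphism fixing $F$, it must send $Z(\Delta_0)$ to itself, hence $z^* = \alpha 1 + \beta z$ for scalars $\alpha,\beta$. Applying $*$ twice and using $(z^2)^* = z^2$ (as $z^2 = a \in F$) together with $z^{**} = z$ pins down $z^* = \pm z$ (the argument is the same bookkeeping as in Lemma \ref{quadrsinv}, applied to the subalgebra $F1 + Fz$; note $z$ itself is \emph{even}, so there is no sign from the superantiautomorphism rule, and one gets $z^* = \pm z$, not a contradiction yet). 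The plan is then to combine the relation $uz = -zu$ with $u^* = \epsilon u$ and $z^* = \pm z$: apply $*$ to $uz = -zu$. Since $u$ is odd and $z$ is even, $(uz)^* = (-1)^{1\cdot 0} z^* u^* = \pm\epsilon\, zu$, while $(-zu)^* = -(-1)^{0\cdot 1} u^* z^* = \mp\epsilon\, uz$. Using $uz = -zu$ once more, both sides reduce to multiples of $zu$, and consistency should be automatic — so this relation alone is not enough.

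**Closing the argument.** The decisive step, I expect, is to pass to $w = u^2$ and use that $w$ is an invertible even element with $w^* = -w$, then produce a homogeneous \emph{odd} element squaring to zero by a different route. One clean option: the odd element $uz \in \Delta_1$ (nonzero since $u, z$ invertible) satisfies $(uz)^2 = uzuz = -u z z u \cdot(\text{sign from }uz=-zu)$; carefully, $uzuz = u(zu)z = u(-uz)z = -u^2 z^2 = -aw$. So $(uz)^2 = -aw \neq 0$, which is consistent, not contradictory — so instead I would look at $u(1 + cz)$ for a suitable scalar $c$, or better, exploit that $w^{-1}$ exists and $w^* = -w$ forces $(w^{-1})^* = -w^{-1}$, then consider $uw^{-1}$ or combinations, aiming to build $t \in \Delta_1$ with $t^* = \pm t$ and $t^2 \in F$ (a scalar), whence $t^2 = (t^2)^* = -t^2$ gives $t^2 = 0$. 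Concretely: since $w = u^2 \in \Delta_0$ is invertible with $w^* = -w$, and $Z(\Delta_0)$ is a field, one shows $u^2$ cannot be central unless it is zero — because if $u^2 = \gamma 1 + \delta z \in Z(\Delta_0)$ then $u^2$ is fixed or negated appropriately by $*$ while also $u^2 z = z u^2$ must be compared with $uz = -zu$ giving $u^2 z = z u^2$ automatically but also $u(uz) = u(-zu) = -(uz)u = zu\cdot u$... forcing $u^2 z = z u^2$ \emph{and} a sign contradiction with $z^* = \pm z$ and $(u^2)^* = -u^2$. The main obstacle is organizing precisely which of these relations yields $+1 = -1$; I would expect it to come from computing $(u^2 z)^*$ in two ways. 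If instead $u^2 \notin Z(\Delta_0)$, then $\{1, z, u, uz\}$ spans a graded quaternion subalgebra $\langle a', b'\rangle$ inside $\Delta$ closed under $*$ (with $a' = z^2$, $b' = u^2$ — wait, $u^2$ need not be scalar, so one first replaces: the subalgebra generated by $z$ and $u$; since $z^2, u^2 \in \Delta_0$ commute suitably, and $u^2$ is $*$-skew, $F[z,u]$ is a $4$-dimensional graded quaternion algebra only if $u^2 \in F$). The cleanest path: show $u^2 \in F1$ must hold (using that $u$ centralizes... no). Given the delicacy, the honest plan is: reduce, via the structure theorem and possibly Remark \ref{superdiv} writing $\Delta$ up to $M_n(F)\wotimes$-factors, to the graded quaternion case covered by Lemma \ref{lemmaquatinv}, or directly produce the nilpotent odd element; the genuine difficulty is handling the case where $u^2$ fails to be a scalar, which I would resolve by noting $u$ and $z$ generate a graded-quaternion-like subalgebra on which $*$ restricts, then invoking Lemma \ref{lemmaquatinv}.
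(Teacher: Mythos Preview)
Your proposal has a genuine gap. You correctly identify the starting point---pick $u\in\Delta_1$ with $u^*=\epsilon u$, set $w=u^2$, and note $w^*=-w$---and you correctly diagnose the obstacle: unlike the quaternion case, $w$ need not lie in $F$. But none of your attempted repairs closes the argument. The subalgebra generated by $z$ and $u$ is $4$-dimensional only when $u^2\in F$; in fact one checks (as the paper does) that $w=u^2$ commutes with $u$, while any element of $Fz$ anticommutes with $u$, so if $w\in Z(\Delta_0)=F1\oplus Fz$ then $w\in F$, and then $w^*=-w$ forces $w=0$. Hence for $\deg\Delta>2$ one always has $u^2\notin Z(\Delta_0)$, and your ``reduce to Lemma \ref{lemmaquatinv} on the subalgebra $\langle z,u\rangle$'' plan cannot work: that subalgebra is strictly larger than a graded quaternion and is not $*$-stable in any useful way. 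The computations with $(uz)^2$, $(u^2z)^*$, or $u(1+cz)$ are consistent and do not yield a contradiction.

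The paper's proof uses a completely different mechanism: induction on the degree of $\Delta$ as an ungraded algebra, with Lemma \ref{lemmaquatinv} as the base case. The element $a:=u^2$ (satisfying $a^*=-a$ and $a\notin Z(\Delta_0)$) is not used to force nilpotence directly; instead it is used to produce a proper field extension $L\supsetneq F$ inside $\Delta$. Concretely, let $\sigma(x)=uxu^{-1}$ on $\Delta_0$, set $G=\{x\in\Delta_0:\sigma(x)=x\}$, and take $L=Z(G)$; then $a\in L$ so $L\ne F$. The centralizer $\tilde\Delta=C_\Delta(L)$ is an even CDS over $L$ with $\tilde\Delta_1\ne 0$, and Jacobson density gives $\Delta\otimes_F L\simeq M_r(\tilde\Delta)$, so $\deg_L\tilde\Delta<\deg_F\Delta$. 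The map $*\otimes 1$ is a superinvolution on $M_r(\tilde\Delta)$, and Corollary \ref{co:simplesuperinvolution} then transfers a superinvolution to $\tilde\Delta$, contradicting the induction hypothesis. The missing idea in your proposal is precisely this centralizer-and-descent step; there is no direct nilpotent-element argument available beyond the quaternion case.
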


\begin{proof}
We prove the lemma by induction on the degree (as ungraded algebra)
of $\Delta$.
The case of degree $2$ is proved in Lemma \ref{lemmaquatinv}.\\
Now, suppose that the degree of $\Delta$ is greater than $2$ and
that $*$ is a superinvolution on $\Delta$. Since the eigenvalues of
the linear map $*$ are $\pm 1$, there is a nonzero element $u\in
\Delta_1\cap H(\Delta,*)$ or $u\in \Delta_1\cap S(\Delta,*)$. Let
$u^2=a$. Then $a^*=(u^2)^*=-(u^*)^2=-a$. Hence $a \notin F^{\times}$. On
the other hand, since $au=ua$, it follows that $a\notin
K:=Z(\Delta_0)$, because $Z(\Delta_0)=F1\oplus Fz$ and $xz=-zx$ for
all $x\in \Delta_1$. We define the map $\sigma:\Delta_0 \rightarrow
\Delta_0$, $\sigma(x)=uxu^{-1}$. Clearly, $\sigma^2(x)=axa^{-1}$.
Let $G:=\{x\in \Delta_0 \mid \sigma(x)=x\}.$ The algebra $G$ is a
division subalgebra of $\Delta_0$. The field $L=Z(G)$ contains $a$
and hence it is a proper extension of $F$. Now we define $\tilde
\Delta:=C_{\Delta}(L)$, the centralizer of $L$ in $\Delta$, a proper
division superalgebra
contained in $\Delta$.\\
We consider the irreducible representation of superalgebras
$\rho:\Delta \otimes L \rightarrow \End_F(\Delta)$, $d\otimes l
\mapsto \rho(d\otimes l)$, with $\rho(d\otimes l)(v):=lvd$ for all
$v \in \Delta$. Applying Jacobson's density (see Corollary
\ref{co:Jacobson}), we have
$$\Delta \otimes L \simeq \End( _{\tilde{\Delta}}\Delta)\simeq M_r(\tilde \Delta).$$
Therefore $L=Z(\tilde \Delta)$, where $\tilde \Delta$ is an even CDS
over $L$ with degree (as algebra over $L$) less than the degree of
the $F$ algebra $\Delta$. The map $*\otimes 1$ is a superinvolution
on the $L$-superalgebra  $\Delta \otimes L$. Hence also $M_r(\tilde
\Delta)$ has a superinvolution. It follows that the $L$-superalgebra
$\tilde \Delta$ has a superinvolution (see Corollary
\ref{co:simplesuperinvolution}$(i)$), a contradiction with the
induction hypothesis.
\end{proof}

\begin{theorem}\label{th:evenfirstkind}
Let $A$ be a CSS of even type and suppose that $Z(A_0)$ is a field.
Then it possesses no superinvolution.
\end{theorem}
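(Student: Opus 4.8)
The plan is to reduce the statement to Lemma \ref{evendivision}, using the structure theorem for finite dimensional simple superalgebras with superinvolution (Corollary \ref{co:simplesuperinvolution}). Throughout this subsection $A$ is a CSS of even type with $A_1\neq 0$; since $Z(A_0)$ is a field it is a proper quadratic extension $F(\sqrt a)$ of $F$, so $A$ lies in case (3) or (4) of Theorem \ref{structureeven}. Suppose, for contradiction, that $A$ admits a superinvolution $*$. Being finite dimensional, simple and with nonzero odd part, $A$ is, by Corollary \ref{co:simplesuperinvolution}, isomorphic (as a superalgebra with superinvolution) to $\End_\Delta(V)$ in one of two ways: \textup{(i)} $\Delta$ is a finite dimensional division superalgebra carrying a superinvolution, $V$ a left $\Delta$-module with $V_0\neq 0$, and $h$ an even hermitian superform; or \textup{(ii)} $\Delta=D$ is a trivially graded division algebra with an ordinary involution and $h$ is an odd hermitian form. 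In both cases $\Delta$ is central over $F$ — its class in $BW(F)$ is that of $A$ by the remark closing Section 2 — hence a CDS.

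Next I would dispose of case \textup{(ii)}. Since $D$ is trivially graded, the grading-preserving $D$-endomorphisms of $V$ decompose as $A_0\simeq \End_D(V_0)\times \End_D(V_1)$. As $A_1\neq 0$, one of $\Hom_D(V_0,V_1)$, $\Hom_D(V_1,V_0)$ is nonzero, so $V_0$ and $V_1$ are both nonzero; thus $A_0$ is a direct product of two nonzero central simple $F$-algebras and $Z(A_0)\simeq F\times F$ is not a field, a contradiction. The same computation shows that in case \textup{(i)} one must have $\Delta_1\neq 0$: if $\Delta_1=0$, then $\Delta$ is trivially graded, $A=\End_\Delta(V)$ is checkerboard-graded, and either $V_1\neq 0$ and $Z(A_0)\simeq F\times F$, or $V_1=0$ and $A_1=0$, both impossible. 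Moreover $\Delta$ is of even type: were $Z(\Delta)$ to contain a nonzero odd element $z$, then scalar multiplication by $z$ would be a nonzero odd central element of $A\simeq\End_\Delta(V)$, contradicting that $A$ is of even type. Therefore, in case \textup{(i)}, $\Delta$ is a central division superalgebra of even type with $\Delta_1\neq 0$ endowed with a superinvolution, which is exactly what Lemma \ref{evendivision} forbids. This contradiction proves the theorem.

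I expect the only delicate points to be the two ``descent'' steps — that $\Delta$ is genuinely a CDS of even type and that $\Delta_1\neq 0$ — rather than anything substantial; both are settled by the checkerboard description of $A_0$ and the centralizer picture $\Delta=\End_A(V)$ used above. A variant that sidesteps Corollary \ref{co:simplesuperinvolution} for the final step is to write $A\simeq (M_n(F))\wotimes\Delta'$ with $\Delta'$ a CDS as in Remark \ref{superdiv} (automatically of even type with $\Delta'_1\neq 0$, since $A$ is), observe that a superinvolution on $A$ forces one on $\Delta'$ — using that $A$ and $\Delta'$ have the same class in $BW(F)$ and that the CDS in a given $BW(F)$-class is unique up to graded isomorphism, so $\Delta'$ coincides with the $\Delta$ of Corollary \ref{co:simplesuperinvolution} — and then apply Lemma \ref{evendivision}.
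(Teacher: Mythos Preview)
Your proof is correct and follows the same strategy as the paper: reduce to Lemma \ref{evendivision} by extracting, via Corollary \ref{co:simplesuperinvolution}, a central division superalgebra of even type with nonzero odd part carrying a superinvolution. The paper's own proof is precisely your ``variant'' (write $A\simeq (M_n(F))\wotimes\Delta$ by Remark \ref{superdiv}, note that a superinvolution on $A$ forces one on $\Delta$ by Corollary \ref{co:simplesuperinvolution}, then apply Lemma \ref{evendivision}); your main argument makes explicit the case analysis (ruling out case~(ii) and $\Delta_1=0$ via $Z(A_0)\simeq F\times F$, and $\Delta$ odd via $Z(A)_1\ne 0$) that the paper leaves implicit in its ``if and only if''.
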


\begin{proof}
We know that (see Remark \ref{superdiv})
$$A\simeq (M_n(F)) \wotimes \Delta,$$ where $\Delta$ is a
CDS. We observe, again by Corollary \ref{co:simplesuperinvolution}
that $A$ has a superinvolution if and only if $\Delta$ has a
superinvolution. Then the theorem follows immediately from Lemma
\ref{evendivision}.
\end{proof}

\begin{remark}
We observe that even if a CSS $A$ of even type where $Z(A_0)$ is a
field has no superinvolution, it may have order two in the
Brauer-Wall group. For example, if $-1$ is a square in $F$, then
every (even) superalgebra of the form $$F\langle \sqrt{a_1} \rangle
\wotimes \dots \wotimes F\langle \sqrt{a_{2r}}\rangle$$ has a order
two in the Brauer-Wall group because of Lemma \ref{quadrsanti}. In
particular, if $-1$ is a square in $F$, every division graded
quaternion algebra has order two in $BW(F)$ but has no
superinvolution.
\end{remark}

\subsection{An example: Clifford algebras}
Let $(V,q)$ be a quadratic space over the field $F$ and let $C(V,q)$
denote its Clifford algebra. The Clifford algebra is in a canonical
way a superalgebra: it possess the grading inherited from the
grading of the tensor algebra. Moreover, the Clifford algebra is
always endowed with a canonical involution, namely the involution
fixing all the elements of $V$. For more details about Clifford
algebras, see for example \cite{kmrt}. Thanks to our theorems, it is
very easy to establish the existence of superinvolutions on Clifford
algebras. It depends only on the dimension of $V$ (denoted by $\dim
q$) and the center of the even part of $C(V,q)$, denoted by
$Z(C_0)$.
\begin{corollary}
Let $(V,q)$ be a quadratic space over the field $F$. Then:
\begin{romanenumerate}
\item
If $\dim q$ is odd, then there exists no superinvolution.
\item
If $\dim q$ is even and $Z(C_0)$ is a field, then there exists no
superinvolution.
\item
If $\dim q$ is even and $Z(C_0)\simeq F \times F$, then there exists
a superinvolution.
\end{romanenumerate}
\end{corollary}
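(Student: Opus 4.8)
The plan is to read off the graded‐algebra type of $C(V,q)$ in each case and then quote the relevant theorem already established. We use throughout that, for $q$ nondegenerate of dimension $k\ge 1$, the superalgebra $C(V,q)$ is a finite dimensional CSS (its graded center is $F$), and that for an orthogonal basis $e_1,\dots ,e_k$ of $(V,q)$ (one exists since $\operatorname{char}F\neq 2$) the volume element $z=e_1\cdots e_k$ satisfies $z^2\in F^\times$ and, moving each $e_i$ across the product, $e_iz=(-1)^{k+1}ze_i$. Hence when $k=\dim q$ is odd, $z$ is a homogeneous central element of odd degree and $C(V,q)$ is a CSS of odd type; when $k$ is even, $z$ anticommutes with every element of $V$, hence of $C_1$, and $Z(C_0)=F\oplus Fz$, so $C(V,q)$ is a CSS of even type whose even part has center $Z(C_0)$, equal to the field $F(\sqrt{z^2})$ when $z^2\notin F^{\times 2}$ and to $F\times F$ when $z^2\in F^{\times 2}$.

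Granting this, (i) is immediate: $\dim q$ odd makes $C(V,q)$ a CSS of odd type, so Theorem \ref{th:oddfirstkind} denies it a superinvolution of the first kind. Likewise (ii) is immediate: $\dim q$ even with $Z(C_0)$ a field is exactly the hypothesis of Theorem \ref{th:evenfirstkind} (note $\dim q\ge 2$ and $C_1\supseteq V\neq 0$ here, so $C(V,q)_1\neq 0$).

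For (iii), put $A=C(V,q)$, a CSS of even type with $\dim q=2m\ge 2$, $A_1\neq 0$ and $Z(C_0)\simeq F\times F$; write $z^2=w^2$ with $w\in F^\times$. By Theorem \ref{structureeven}(2), $A\simeq M_{r+s}(F)\wotimes(D)$ with $A_0\simeq M_r(D)\times M_s(D)$, $D$ a central division algebra carrying the trivial grading. The key step is $r=s$. Choose an anisotropic $v\in V$ (possible since $q\neq 0$); it is invertible, $v^{-1}=v/q(v)$, and homogeneous of degree $1$, so $\iota_v$ stabilizes $A_0$, and from $vz=-zv$ we get $\iota_v(z)=-z$. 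Thus $\iota_v$ interchanges the two central primitive idempotents $e_\pm=\tfrac12(1\pm z/w)$ of $C_0$, giving an $F$-algebra isomorphism $M_r(D)\cong M_s(D)$ and forcing $r=s$. Now split into two cases. If $D=F$, then $A\simeq M_{r+r}(F)$ with the standard grading of Proposition \ref{splitsuper}, which (case $n=m$) has a superinvolution of the first kind. If $D\neq F$, then since $A\simeq M_{r+s}(D)$ as ungraded algebras, $[D]=[C(V,q)]$ in $B(F)$; as $C(V,q)$ carries the canonical (ordinary, $F$-linear) involution fixing $V$, this class has order at most $2$, so by classical Albert's Theorem $D$ admits an involution, and then the theorem proved above for CSS of even type with $Z(A_0)\simeq F\times F$ and $D\neq F$ yields a superinvolution on $A$.

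The only step requiring an idea is the equality $r=s$, which is precisely what makes Proposition \ref{splitsuper} applicable in the split case, and it is obtained by the "swap" automorphism $\iota_v$. The remaining ingredients — the sign identity $e_iz=(-1)^{k+1}ze_i$, the identification of $Z(C_0)$ and of $e_\pm$, and the fact that the canonical involution of $C(V,q)$ is an $F$-linear ordinary involution — are routine.
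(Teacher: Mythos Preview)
Your proof is correct and follows exactly the route the paper intends: the corollary is stated without proof, with the remark that ``thanks to our theorems, it is very easy to establish the existence of superinvolutions on Clifford algebras,'' so all that is expected is to identify the type of the CSS $C(V,q)$ and invoke Theorems \ref{th:oddfirstkind} and \ref{th:evenfirstkind} for (i) and (ii), and the $F\times F$ case of Section~3 for (iii).

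The one place where you add genuine content beyond what the paper spells out is in (iii): to apply Proposition \ref{splitsuper} in the split case $D=F$ you must know that $r=s$ (or $rs$ even), and your argument via the inner automorphism $\iota_v$ swapping the central idempotents $e_\pm$ of $C_0$ is an elegant way to force $M_r(D)\cong M_s(D)$ and hence $r=s$. An alternative is a bare dimension count: from $\dim C=2^{2m}$ and $\dim C_0=2^{2m-1}$ one gets $(r+s)^2d^2=2^{2m}$ and $(r^2+s^2)d^2=2^{2m-1}$, whence $(r-s)^2=0$. Your swap argument is cleaner and has the advantage of working uniformly without reference to the specific dimension of $D$. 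For $D\neq F$ your use of the canonical (ordinary) involution on $C(V,q)$ to conclude $[D]=[C(V,q)]$ has order at most $2$ in $B(F)$, and hence that $D$ carries an involution, is precisely what is needed to feed into the unnumbered theorem for the case $Z(A_0)\simeq F\times F$.
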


\section{The graded Albert theorem}
\noindent In this section we prove a graded version of Albert's
Theorem. Here we have a CSS with superantiautomorphism and want to
construct a superantiautomorphism whose square is the grading
automorphism. Again, all the superantiautomorphisms in this section
will be assumed to be $F$-linear.\\
In the first place, we define an invariant which will play an
important role in the proof of the graded Albert Theorem.

\begin{lemma}\label{invariant}
Let $A$ be a CSS of even type and suppose that the class of $A$ has
order $\leq 2$ in $BW(F)$. Let $\eta$ be a superantiautomorphism of
$A$. Then  there is an invertible even element $a$ such that
$\eta^2(x)=axa^{-1}$ for any $x\in A$. Moreover, $a\eta(a)=\eta(a)a
\in F^\times$ and the quantity $a\eta(a)F^{\times 2} \in
F^{\times}/F^{\times 2}$ does not depend on the choice of $\eta$.
\end{lemma}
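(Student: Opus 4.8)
The plan is to produce the element $a$ by applying the graded Skolem--Noether theorem to the automorphism $\eta^2$, and then to extract scalars by iterating $\eta$ and using the hypothesis on the Brauer--Wall class. First I would observe that $\eta^2$ is an $F$-linear graded automorphism of $A$; moreover, since $\eta$ is a superantiautomorphism of $A$ of even type, $\eta$ fixes the graded center $F$ and maps $Z(A_0)$ to itself, and since $\eta^2$ is the square of such a map it fixes $Z(A)=F$ and $Z(A_0)$ elementwise. By the Remark following the graded Skolem--Noether theorem, $\eta^2=\iota_a$ for some invertible \emph{even} element $a\in A_0^\times$, i.e. $\eta^2(x)=(-1)^{ax}axa^{-1}=axa^{-1}$ since $a$ is even. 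This gives the first assertion.

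Next I would pin down $a$ up to a scalar and then show $a\eta(a)$ is central. Applying $\eta$ to the identity $\eta^2(x)=axa^{-1}$ and using that $\eta$ is a superantiautomorphism, one gets $\eta^3(x)=\eta(a^{-1})\eta(x)\eta(a)$ for all $x$; but also $\eta^3=\eta\circ\eta^2=\eta^2\circ\eta$, and $\eta^2(\eta(x))=a\,\eta(x)\,a^{-1}$. Comparing, $a\eta(x)a^{-1}=\eta(a^{-1})\eta(x)\eta(a)$ for every $x\in A$, so $\eta(a)a$ centralizes $\eta(A)=A$; hence $\eta(a)a\in Z(A)=F\cdot 1$, say $\eta(a)a=\lambda\in F$ (nonzero since both factors are invertible). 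Because $a$ is even and $\eta$ preserves parity, $\eta(a)$ is even, so this scalar lies in $F^\times$ and is even; the same computation on the other side (or conjugating) gives $a\eta(a)=\eta(a)a=\lambda$.

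For the well-definedness of the square class, suppose $\eta'$ is another superantiautomorphism of $A$. Then $\eta'\circ\eta^{-1}$ is an $F$-linear graded automorphism of $A$, and using that $A$ is a CSS of even type whose class has order $\le 2$ in $BW(F)$ one argues that $\eta'\circ\eta^{-1}$ fixes $Z(A)$ and $Z(A_0)$ elementwise — the point being that a superantiautomorphism either fixes $Z(A_0)$ pointwise or composes with the nontrivial automorphism $z\mapsto -z$, but the latter would force $A\wotimes A^s$-type obstructions incompatible with the order-$2$ hypothesis; in either case the composite is inner by an even element, $\eta'\circ\eta^{-1}=\iota_b$ with $b\in A_0^\times$. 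Then $\eta'=\iota_b\circ\eta$, and a direct computation (writing $\eta'(x)=b\,\eta(x)\,b^{-1}$, so $(\eta')^2(x)=b\,\eta(b^{-1})\, a\, x\, a^{-1}\,\eta(b)\,b^{-1}$ after using $\eta^2=\iota_a$ and $\eta$'s anti-multiplicativity) shows the new conjugating element is $a'=a\,\eta(b)\,b^{-1}$ up to a scalar, whence $a'\eta'(a')$ differs from $a\eta(a)$ by $N(b):=b\,\eta(b)\in F^\times$ times a square coming from the scalar ambiguity; since $N(b)$ itself turns out to be a square (or at worst the change is by an explicit square), $a\eta(a)F^{\times 2}$ is unchanged. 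The main obstacle I anticipate is precisely this last step: controlling how $a\eta(a)$ transforms under $\eta\mapsto\iota_b\circ\eta$ and verifying that the correction factor $b\,\eta(b)$ contributes trivially in $F^\times/F^{\times 2}$, which will need the order-$\le 2$ hypothesis on the Brauer--Wall class (so that $\eta$ can be adjusted to have controlled square) together with a careful bookkeeping of the scalar ambiguity in the choice of $a$.
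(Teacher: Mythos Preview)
Your first two steps---producing the even element $a$ via the graded Skolem--Noether theorem (using that $\eta^2$ fixes $Z(A_0)$ elementwise) and showing $a\eta(a)=\eta(a)a\in F^\times$ by comparing the two expressions for $\eta^3$---are correct and coincide with the paper's argument.

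The third step has a genuine gap, though not where you expect it. First, your claim that $b$ must be even is both unjustified and unnecessary: two superantiautomorphisms can act on $z\in Z(A_0)$ with opposite signs, so $\eta'\circ\eta^{-1}$ need not fix $Z(A_0)$, and nothing about the order-$\le 2$ hypothesis prevents this (that hypothesis is equivalent to the mere existence of a superantiautomorphism and is not used again). The paper simply takes $b$ homogeneous. Second, you misread your own computation: from $(\eta')^2(x)=b\,\eta(b)^{-1}a\,x\,a^{-1}\eta(b)\,b^{-1}$ the new conjugating element is $c=b\,\eta(b)^{-1}a$, not $a\,\eta(b)\,b^{-1}$. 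Third, and this is the key point you miss, there is \emph{no} correction factor at all. Using $\eta(a)a=\lambda\in F^\times$ and $\eta^2(b^{-1})=ab^{-1}a^{-1}$ one finds (in the even-$b$ case, the odd case being analogous with signs)
\[
\eta'(c)=b\,\eta(c)\,b^{-1}=b\bigl(\eta(a)\,ab^{-1}a^{-1}\,\eta(b)\bigr)b^{-1}=\lambda\,a^{-1}\eta(b)\,b^{-1},
\]
hence $c\,\eta'(c)=b\eta(b)^{-1}a\cdot\lambda a^{-1}\eta(b)b^{-1}=\lambda=a\eta(a)$ exactly. The quantity $b\eta(b)$ you propose to control is not even a scalar in general, and it simply does not appear. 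The only ambiguity is in the choice of $a$ itself (determined up to $F^\times$), which changes $a\eta(a)$ by a square; that is the entire content of the well-definedness claim.
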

\begin{proof}
Since $\eta^2$ is a graded automorphism which fixes elementwise
$Z(A_0)$, the graded Skolem-Noether theorem implies the existence of
a homogeneous element $a \in A_0$ such that $\eta^2(x)=axa^{-1}$ for
any $x$. We remark that $a\eta(a) \in F^\times$: this follows
directly from the relation
$\eta(\eta^2(x))=\eta^3(x)=\eta^2(\eta(x))$ valid for every $x\in
A$. Since $\eta^2$ fixes $F$, it follows that
$a\eta(a)=\eta(a)a \in F^\times$.\\
Now, let $\xi$ be another superantiautomorphism of $A$. Then there
is a homogeneous element $b\in A^\times$ such that for all $x\in A$
we have $\xi(x)=(-1)^{bx}b\eta(x)b^{-1}$. A computation shows that
$\xi^2(x)=b\eta(b)^{-1}axa^{-1}\eta(b)b^{-1}$. Let
$c:=b\eta(b)^{-1}a$. Then $c\xi(c)=a\eta(a)$. This number is
uniquely determined modulo squares because $a$ is uniquely
determined up to a nonzero scalar in $F$.
\end{proof}

\begin{remark}\label{invariantungraded}
In the ungraded case, $A$ is a central simple $F$-algebra with
involution of the first kind if and only if for any antiautomorphism
$\sigma$ of $A$ (with $\sigma^2=\iota_{a}$) we have $\sigma(a)a \in
F^{\times 2}$ (see \cite[p. 306]{scharl}).
\end{remark}

\begin{lemma}\label{albertgradeddiv}
Let $\Delta$ be a CDS of even type, $\Delta_1\neq 0$, and assume
that the class of $\Delta$ has order $\leq 2$ in $BW(F)$. Then
$\Delta$ has a superantiautomorphism $\varphi$ such that
$\varphi^2=\nu$, the grading automorphism.
\end{lemma}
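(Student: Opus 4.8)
The plan is to use the structural decomposition of a CDS of even type together with the invariant introduced in Lemma~\ref{invariant}, and to reduce the statement to the ungraded Albert Theorem. First I would recall from Remark~\ref{superdiv} and Theorem~\ref{structureeven} that, since $\Delta_1\neq 0$ and $\Delta$ is a division superalgebra of even type, $Z(\Delta_0)=F1\oplus Fz$ with $z^2=a\notin F^{\times 2}$, so $L:=Z(\Delta_0)\simeq F(\sqrt{a})$ is a field and $\Delta_0$ is a central division $L$-algebra; moreover $\Delta_1=\Delta_0 z$ and $\nu=\iota_z$. Because the class of $\Delta$ in $BW(F)$ has order $\leq 2$, the superopposite $\Delta^s$ is isomorphic to $\Delta$, which gives (after checking that a Brauer-Wall inverse is represented by $\Delta^s$) a superantiautomorphism $\eta$ of $\Delta$. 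The task is to modify $\eta$ into a $\varphi$ with $\varphi^2=\nu$.

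The key step is to compute the invariant $a\eta(a)F^{\times 2}$ of Lemma~\ref{invariant}, where $\eta^2=\iota_a$ with $a\in\Delta_0^\times$. Since $\eta^2$ is a graded automorphism fixing $Z(\Delta_0)=L$ elementwise, $\eta$ restricts to an antiautomorphism $\eta_0$ of the central simple $L$-algebra $\Delta_0$ with $\eta_0^2=\iota_a$; the same scalar $a\eta_0(a)$ arises, but now as an element of $L^\times$. By Remark~\ref{invariantungraded} applied over $L$, the central division $L$-algebra $\Delta_0$ admits an involution of the first kind iff $a\eta_0(a)\in L^{\times 2}$. Here is where I expect the main work: I would argue that if $a\eta(a)\in F^{\times 2}$ then one can adjust $\eta$ by an inner superantiautomorphism to get an actual superinvolution on $\Delta$ (contradicting Lemma~\ref{evendivision}), so necessarily $a\eta(a)\notin F^{\times 2}$; and then I would pin down which nonsquare class it is. The natural candidate is $a\eta(a)F^{\times 2}=a\cdot F^{\times 2}$ (the class of $z^2$), which is exactly the obstruction to $\varphi^2$ being the identity rather than $\nu$.

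Granting that $a\eta(a)\equiv a \pmod{F^{\times 2}}$, I finish as follows. Replace $\eta$ by $\xi(x):=(-1)^{bx}b\,\eta(x)\,b^{-1}$ for a suitable homogeneous $b$, using the computation in the proof of Lemma~\ref{invariant} that $\xi^2=\iota_c$ with $c=b\eta(b)^{-1}a$ and $c\xi(c)=a\eta(a)$. Choosing $b$ so that $c$ is a scalar multiple of $z$ — which is possible precisely because $a\eta(a)$ and $z^2=a$ lie in the same square class — yields $\xi^2=\iota_z=\nu$. Concretely, one takes $b\in\Delta_0^\times$ with $b\eta(b)^{-1}a = \lambda z$ for some $\lambda\in F^\times$; such a $b$ exists because $\eta(b)b^{-1}$ runs over a coset that, modulo $F^\times$, realizes $aF^{\times}/({\text{norms}})$, and the square-class condition is exactly what makes $\lambda z$ attainable. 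Then $\varphi:=\xi$ satisfies $\varphi^2=\iota_{\lambda z}=\iota_z=\nu$, as desired. The delicate point throughout is the bookkeeping of square classes in $F$ versus norms from $L$, i.e. identifying $a\eta(a)F^{\times 2}$ with $aF^{\times 2}$; I would handle it by the dichotomy "either the invariant is trivial, forcing a superinvolution and contradicting Lemma~\ref{evendivision}, or it equals $aF^{\times 2}$," ruling out any third possibility by a direct norm computation in $\Delta_0/L/F$.
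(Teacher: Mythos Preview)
Your overall strategy---compute the invariant of Lemma~\ref{invariant}, rule out the trivial square class via Lemma~\ref{evendivision}, and conclude $\varphi^2=\iota_z=\nu$---matches the paper's. But the crucial middle step is a genuine gap. You assert the dichotomy ``either $a\eta(a)\in F^{\times 2}$ or $a\eta(a)\in z^2F^{\times 2}$'' and say you would ``rule out any third possibility by a direct norm computation,'' yet no such computation is given, and a priori $a\eta(a)$ is an arbitrary element of $F^\times$. Likewise, your proposed construction of $b\in\Delta_0^\times$ with $b\eta(b)^{-1}a\in F^\times z$ is not justified: the map $b\mapsto \eta(b)b^{-1}$ has no evident surjectivity property that would make this work. (Two smaller issues: $z\in Z(\Delta_0)\subseteq\Delta_0$ is \emph{even}, so $\Delta_1\ne\Delta_0 z$; and you must first arrange $\eta(z)=z$---by composing with $\iota_u$ for some odd $u$ if necessary---before $\eta|_{\Delta_0}$ is $L$-linear.)

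The paper closes the gap as follows. After ensuring $\eta(z)=z$, the restriction $\eta|_{\Delta_0}$ is an $L$-linear antiautomorphism of the central simple $L$-algebra $\Delta_0$, so by the classical Albert theorem $\Delta_0$ has an $L$-involution, and Remark~\ref{invariantungraded} (over $L$) gives $a\eta(a)=\lambda^2$ with $\lambda\in L^\times$. Now comes the key move you are missing: the Cayley-type substitution $\xi:=\iota_{(1+\lambda^{-1}a)^{-1}}\circ\eta$ (valid since $a\notin L$ unless we are already done) satisfies $\xi^2=\iota_\lambda$. This reduces to the case $a\in L^\times$. Then $\eta(a)=a$ and $a\eta(a)=a^2\in F^\times$, so $a\in F^\times\cup F^\times z$; the first option yields a superinvolution and is excluded by Lemma~\ref{evendivision}, forcing $a\in F^\times z$ and $\eta^2=\iota_z=\nu$. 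The explicit element $(1+\lambda^{-1}a)^{-1}$ is what replaces your unspecified $b$.
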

\begin{proof}
Let $\eta$ be a superantiautomorphism of $\Delta$ and
$\eta^2(x)=axa^{-1}$, $a \in \Delta_0$.
Now, let
$Z(\Delta_0)=F\oplus Fz$, $z^2=f \in F^\times$ with $f\notin F^{\times2}$
and $\nu(x)=z^{-1}xz$. Moreover, we have $uz=-zu$ for all $u \in
\Delta_1$.\\
We may assume that $\eta\mid_{Z(\Delta_0)}=\mathrm{id}\mid_{Z(\Delta_0)}$
(we know that $\eta (z)=\pm z$: if $\eta(z)=-z$, we pick a nonzero element $u \in
\Delta_1$ and define the superantiautomorphism $\eta'$ by $\eta':=\iota_{u}\circ \eta$,
then $\eta'(z)=z$). \\
The map $\eta\mid_{\Delta_0}$ is a $Z(\Delta_0)-$linear
antiautomorphism, hence by the classical Albert's Theorem $\Delta_0$
has an involution. From Remark \ref{invariantungraded} it follows
that $a\eta(a) \in Z(\Delta_0)^{\times
2}$.\\
Let $\lambda \in Z(\Delta_0)^{\times}$ be such that $a\eta(a)=\lambda^2$.\\
We may assume that $a \in Z(\Delta_0)^{\times}$. In fact, if $a \notin
Z(\Delta_0)^{\times}$, then $1+\lambda^{-1}a\neq 0$. We define
$\xi=\iota_{(1+\lambda^{-1}a)^{-1}}\circ\eta$. Clearly,
$\xi\mid_{Z(\Delta_0)}=\mathrm{id}\mid_{Z(\Delta_0)}$ and
\begin{align}
\xi^2 &=\iota_{(1+\lambda^{-1}a)^{-1}}\circ \eta
\circ\iota_{(1+\lambda^{-1}a)^{-1}}\circ\eta \nonumber\\
&=\iota_{(1+\lambda^{-1}a)^{-1}}\circ\iota_{\eta(1+\lambda^{-1}a)}\circ\eta^2 \nonumber\\
&=\iota_{(1+\lambda^{-1}a)^{-1}\eta(1+\lambda^{-1}a)a} \nonumber\\
&=\iota_{(1+\lambda^{-1}a)^{-1}(a+\lambda^{-1}\eta(a)a)} \nonumber\\
&=\iota_{(1+\lambda^{-1}a)^{-1}(a+\lambda)} \nonumber\\
&=\iota_{\lambda} \nonumber
\end{align}
\noindent with $\lambda \in Z(\Delta_0)^{\times}$, as desired.\\
Since $a \in Z(\Delta_0)^{\times}$ and
$\eta\mid_{Z(\Delta_0)}=\mathrm{id}\mid_{Z(\Delta_0)}$, we have
$a\eta(a)=a^2 \in F^{\times}$. Hence $a\in F^{\times}$ or $a\in F^{\times}z$. But if
$a\in F^{\times}$, then $\eta^2=\mathrm{id}$, so it would be a
superinvolution, and this contradicts Lemma \ref{evendivision}. It
follows that $a\in F^{\times}z$. Therefore $\eta^2=\iota_{z}=\nu$, as
desired.
\end{proof}

\begin{theorem}\label{albertgraded}\textbf{\emph{(Graded Albert Theorem)}}\quad
Let A be a CSS which possesses a superantiautomorphism. Then $A$ has
also a superantiautomorphism $\varphi$ such that $\varphi^2=\nu$,
the grading automorphism.
\end{theorem}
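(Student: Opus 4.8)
The strategy is to reduce the general CSS to the division-superalgebra cases already handled, splitting according to the classification of CSSs recalled in Theorems \ref{structureodd} and \ref{structureeven}. First I would observe that if $A$ possesses a superantiautomorphism, then the class of $A\wotimes A$ is trivial in $BW(F)$, so the class of $A$ has order $\leq 2$ in $BW(F)$; in particular the invariant of Lemma \ref{invariant} and the reductions below make sense. Then I would treat the odd and even types separately.

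\textbf{Odd type.} If $A$ is of odd type, then by Theorem \ref{structureodd} and Remark \ref{superdiv} we have $A\simeq (M_k(F))\wotimes\Delta$ with $\Delta$ a CDS of odd type, in fact $\Delta\simeq (\Delta_0)\otimes F\langle\sqrt a\rangle$. Since $A$ has a superantiautomorphism, so does $\Delta$ (by an argument with Corollary \ref{co:simplesuperinvolution}-type reasoning, or directly transporting along the isomorphism and restricting), and hence $Z(\Delta)\simeq F\langle\sqrt a\rangle$ has one; by Lemma \ref{quadrsanti}, $-1$ is a square in $F$, and by Remark \ref{quadralb} the quadratic graded algebra carries a superantiautomorphism $\sigma_1$ with $\sigma_1^2=\nu$. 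On $M_k(F)$ (ungraded) we may pick the transpose involution $t$, which satisfies $t^2=\mathrm{id}$. Then $t\otimes\sigma_1$ is a superantiautomorphism of $(M_k(F))\otimes F\langle\sqrt a\rangle\simeq A$ whose square is $\mathrm{id}\otimes\nu=\nu$, since the grading is concentrated in the $F\langle\sqrt a\rangle$ factor. This settles the odd case.

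\textbf{Even type.} Now suppose $A$ is of even type with $A_1\ne 0$ (the case $A_1=0$, i.e. $A=(A_0)$ with trivial grading, is the classical Albert Theorem: an antiautomorphism of a central simple algebra of exponent $\leq 2$ yields an involution, and $\nu=\mathrm{id}$ here). If $Z(A_0)\simeq F\times F$, then $A\simeq M_{n+m}(F)\wotimes(D)$ with $D$ a central division algebra; if $D=F$ use the explicit $\varphi$ of Proposition \ref{splitsuper}, and in general, since $A$ has a superantiautomorphism, $D\otimes D\sim 1$ in $B(F)$, so $D$ has an involution $\sigma$ of the first kind with $\sigma^2=\mathrm{id}$, and $\varphi\otimes\sigma$ (with $\varphi$ from Proposition \ref{splitsuper}) is a superantiautomorphism of $M_{n+m}(F)\wotimes(D)\simeq A$ whose square is $\nu\otimes\mathrm{id}=\nu$. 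If instead $Z(A_0)$ is a field, then by Remark \ref{superdiv} we have $A\simeq(M_n(F))\wotimes\Delta$ with $\Delta$ a CDS of even type with $\Delta_1\ne 0$; $A$ having a superantiautomorphism forces the class of $\Delta$ to have order $\leq 2$ in $BW(F)$, so Lemma \ref{albertgradeddiv} provides a superantiautomorphism $\varphi_\Delta$ of $\Delta$ with $\varphi_\Delta^2=\nu_\Delta$. Tensoring with the transpose involution on $M_n(F)$ gives a superantiautomorphism of $(M_n(F))\wotimes\Delta\simeq A$ whose square is $\mathrm{id}\wotimes\nu_\Delta=\nu$, since again the grading lives entirely in the $\Delta$ factor.

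\textbf{Main obstacle.} The genuinely substantive step is the division-superalgebra case $Z(A_0)$ a field, which rests on Lemma \ref{albertgradeddiv}; the rest is bookkeeping that the grading automorphism behaves well under the graded tensor decompositions and that a superantiautomorphism of $A$ descends to one of the division part $\Delta$ (using that superantiautomorphisms, like superinvolutions, are controlled by the structure in Corollary \ref{co:simplesuperinvolution}). I would take some care to verify in each case that $\nu$ of the product decomposes as the grading automorphism of the graded factor tensored with the identity of the ungraded factor — this is where Remark \ref{superdiv}'s observation that the grading can be concentrated in one tensor factor is used — and to confirm that the needed order-$\leq 2$ hypothesis in $BW(F)$ is inherited by $\Delta$, which follows since $M_n(F)\wotimes M_n(F)\sim 1$ in $BW(F)$.
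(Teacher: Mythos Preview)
Your overall strategy matches the paper's: split into odd/even, then reduce to the quadratic graded algebra (odd case), to Proposition~\ref{splitsuper} tensored with an involution on $D$ (even case, $Z(A_0)\simeq F\times F$), and to Lemma~\ref{albertgradeddiv} tensored with transpose (even case, $Z(A_0)$ a field). The even cases are essentially identical to the paper's proof.

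There is, however, a genuine slip in your odd case. You write $A\simeq (M_k(F))\wotimes\Delta$ with $\Delta\simeq(\Delta_0)\otimes F\langle\sqrt a\rangle$, but then you construct $t\otimes\sigma_1$ on $(M_k(F))\otimes F\langle\sqrt a\rangle$ and claim this is isomorphic to $A$. That is only true when $\Delta_0=F$; in general the $\Delta_0$ factor is missing from your construction. To fix this you must also produce an involution on $\Delta_0$: since any superantiautomorphism of $A$ restricts to an ordinary antiautomorphism of $A_0\simeq M_k(\Delta_0)$, the classical Albert Theorem gives an involution $\tau$ on $\Delta_0$, and then $t\otimes\tau\otimes\sigma_1$ does the job. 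The paper sidesteps this detour by using Theorem~\ref{structureodd} directly: $A\simeq (A_0)\wotimes Z(A)$, so that one needs only an involution on the single ungraded factor $A_0$ (obtained by restricting the given superantiautomorphism to $A_0$ and invoking classical Albert) and the superantiautomorphism $s$ on $Z(A)$ with $s^2=\nu$.
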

\begin{proof}
If $A$ is odd, the existence of a superantiautomorphism implies that
the algebra $A_0$ has an involution $\sigma$ and that $\sqrt{-1}\in
F$. Since $\sqrt{-1}\in F$, then the quadratic graded algebra $Z(A)$
has a superantiautomorphism $s$ with $s^2=\nu$ (see Remark
\ref{quadralb}). Hence the odd superalgebra $A\simeq (A_0) \wotimes
Z(A)$ has the superantiautomorphism given by $\varphi=\sigma \otimes
s$
such that $\varphi^2=\nu$.\\
We follow the description of even CSS given in Remark
\ref{superdiv}.\\If $A$ is even and $A\simeq M_{r+s}(F) \wotimes
(D)$, then $D$ has an involution $\sigma$ and $M_{r+s}(F)$ has a
superantiautomorphism $\epsilon$ with $\epsilon^2=\nu$ (see
Proposition \ref{splitsuper}).
Hence the superantiautomorphism $\varphi=\epsilon \otimes \sigma$
has the property $\varphi^2=\nu$.\\
Finally, if $A$ is even and $A\simeq (M_n(F))\wotimes \Delta$, we
apply Lemma \ref{albertgradeddiv} to $\Delta$ to get a
superantiautomorphism $\epsilon$ with $\epsilon^2=\nu$. Tensoring
$\epsilon$ with the transpose, we get $\varphi:=t \otimes \epsilon$
with $\varphi^2=\nu$.
\end{proof}

\begin{corollary}
Let A be an even CSS which possesses a superantiautomorphism $\eta$
with $\eta^{2}=\iota_a$. Then $Z(A_0)\simeq F(\sqrt{\eta(a)a})$.
\end{corollary}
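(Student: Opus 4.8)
The plan is to reinterpret $\eta(a)a\,F^{\times 2}$ as the invariant of Lemma~\ref{invariant} and then to evaluate it by means of the Graded Albert Theorem. We assume $A_1\neq 0$, as the statement tacitly requires (for $A_1=0$ one has $Z(A_0)=Z(A)=F$). Since $A$ carries a superantiautomorphism it is isomorphic as a superalgebra to $A^s$, so $A\wotimes A\sim 1$ in $BW(F)$ and the class of $A$ has order at most $2$ there; hence Lemma~\ref{invariant} applies. We may therefore take $a$ even and invertible, $a\eta(a)=\eta(a)a\in F^{\times}$, and the square class $\theta:=a\eta(a)\,F^{\times 2}$ does not depend on $\eta$ (nor on the particular $a$, since two even invertible elements implementing $\eta^2$ differ by a factor in $Z(A)=F$). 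Writing $Z(A_0)=F\oplus Fz$ with $z^{2}=f\in F^{\times}$ and $\nu=\iota_{z}$, so that $Z(A_0)\cong F(\sqrt f)$, the corollary amounts to the equality $\theta=f\,F^{\times 2}$.

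To compute $\theta$, I would apply Theorem~\ref{albertgraded} to obtain a superantiautomorphism $\varphi$ with $\varphi^{2}=\nu=\iota_{z}$; taking $a=z$ in Lemma~\ref{invariant} gives $\theta=z\varphi(z)\,F^{\times 2}$. Since $\varphi$ is $F$-linear, $\varphi(z)^{2}=\varphi(z^{2})=f$, so $\varphi(z)$ is a square root of $f$ in $Z(A_0)$; as $\varphi$ is injective, $\varphi(z)\notin F$, and the only square roots of $f$ in $F\oplus Fz$ lying outside $F$ are $\pm z$. Thus $\varphi(z)=\pm z$, and the whole matter reduces to proving $\varphi(z)=z$, for then $\theta=z^{2}F^{\times 2}=f\,F^{\times 2}$.

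To see $\varphi(z)=z$ I would inspect the superantiautomorphism built in the proof of Theorem~\ref{albertgraded} in the two even cases. If $A\simeq M_{r+s}(F)\wotimes(D)$, then $\varphi=\epsilon\otimes\sigma$ with $\epsilon$ the map of Proposition~\ref{splitsuper} and $\sigma$ an involution of $D$; here $A_0\cong M_r(D)\times M_s(D)$, so a generator of $Z(A_0)$ over $F$ is $z_0\otimes 1$ with $z_0=\diag(I_r,-I_s)$, the explicit formula of Proposition~\ref{splitsuper} gives $\epsilon(z_0)=z_0$, and $\sigma$ fixes $1$. If $A\simeq (M_n(F))\wotimes\Delta$, then $\varphi=t\otimes\epsilon$ with $t$ the transpose and $\epsilon$ the superantiautomorphism of Lemma~\ref{albertgradeddiv}; here $A_0\cong M_n(\Delta_0)$, so $Z(A_0)=Z(\Delta_0)$ is generated over $F$ by $1\otimes w$ with $w$ a generator of $Z(\Delta_0)$, and the $\epsilon$ produced in the proof of Lemma~\ref{albertgradeddiv} restricts to the identity on $Z(\Delta_0)$, so $\epsilon(w)=w$. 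In either case $\varphi$ fixes a generator of $Z(A_0)$ lying in $F\oplus Fz$, and by $F$-linearity $\varphi(z)=z$.

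The step I expect to be the real obstacle is precisely the identity $\varphi(z)=z$: a priori $\varphi$ could act as $z\mapsto -z$ on $Z(A_0)$, which would only give $\theta\in\{f\,F^{\times 2},-f\,F^{\times 2}\}$ and would make the statement fail whenever $-1\notin F^{\times 2}$. This forces the argument to pin down the action on $Z(A_0)$ of at least one $\varphi$ with $\varphi^2=\nu$, as above; note that once one such $\varphi$ is known to fix $z$, a ``bad'' one with square $\nu$ would, by Lemma~\ref{invariant}, force $\theta$ to equal both $f\,F^{\times 2}$ and $-f\,F^{\times 2}$, hence $-1\in F^{\times 2}$, so the two classes coincide anyway. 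A more computational alternative avoiding Theorem~\ref{albertgraded} is to split according to the structure theorems — $Z(A_0)\simeq F\times F$ giving $A\simeq M_{r+s}(F)\wotimes(D)$ with $D$ forced to carry an involution, and $Z(A_0)$ a field giving $A\simeq(M_n(F))\wotimes\Delta$ with $\Delta$ as in Lemma~\ref{albertgradeddiv} — and to combine these via the multiplicativity $\theta_{A\wotimes B}=\theta_A\theta_B$ (immediate from $(\eta_A\otimes\eta_B)^2=\iota_{a_A\otimes a_B}$ and $(a_A\otimes a_B)(\eta_A\otimes\eta_B)(a_A\otimes a_B)=a_A\eta_A(a_A)\otimes a_B\eta_B(a_B)$).
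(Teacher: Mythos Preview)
Your proposal is correct and follows essentially the same route as the paper: use the invariance of $a\eta(a)\,F^{\times 2}$ from Lemma~\ref{invariant}, invoke the Graded Albert Theorem to replace $\eta$ by a $\varphi$ with $\varphi^2=\nu=\iota_z$, and then observe that the specific $\varphi$ produced by that construction fixes $z$, whence $\theta=z\varphi(z)\,F^{\times 2}=f\,F^{\times 2}$. The paper compresses all of this into a single sentence, pointing to the proof of Lemma~\ref{albertgradeddiv} for the fact $\varphi(z)=z$; you spell out both even structure cases explicitly (including $Z(A_0)\simeq F\times F$, which the paper's one-line argument glosses over), but the underlying mechanism is identical.
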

\begin{proof}
By Lemma \ref{invariant} and Theorem \ref{albertgraded} and the
proof of Lemma \ref{albertgradeddiv} we may assume that
$\eta^2=\nu=\iota_z$, with $Z(\Delta_0)=F\oplus Fz$, $z^2=f \in
F^\times$ with $f\notin F^{\times2}$ and $\eta(z)=z$. Then
$\eta(z)z=z^2=f$, and the result follows.
\end{proof}

\section{Superinvolutions of the second kind}
\noindent The situation for superinvolutions of the second kind, in
contrast with the case of superinvolutions of the first kind, is
analogous to the ungraded case in the sense that we can define a
corestriction and prove that a superinvolution of the second kind
exists if and only if the corestriction is trivial (the graded
Albert-Riehm Theorem). It is also interesting to observe that
superantiautomorphisms whose square is the grading automorphism do
not always exist if the corestriction is trivial.\\
We consider a CSS $A$ over a separable quadratic field extension
$K=F(\theta)$ with Galois group $\mathrm{Gal}(K/F)=\{1,j\}$ and
$\theta^2\in F\setminus F^2$, $\bar \theta:= j( \theta)=-\theta$.\\
A $K/F$-\emph{superantiautomorphism} $\xi$ of $A$ is a
superantiautomorphism which is $K/F$-semilinear, i.e. $\xi (\lambda
x)=\bar \lambda \xi(x)$ for all $\lambda \in K$, $x \in A$.
Accordingly, a $K/F$-superantiautomorphism $\xi$ is called
$K/F$-\emph{superinvolution} if $\xi^2(x)=x$ for all $x\in A$.
\begin{example}
Let $A:=M_{n+m}(K)$, $K=F(\theta)$. Then the CSS $A$ has always a
superinvolution of the second kind, namely the superinvolution
adjoint to the hermitian superform on $K^{n+m}$ given by the
diagonal matrix $\mathrm{diag}(1,\dots,1,\theta,\dots, \theta)$.
\end{example}
Let $\bar A$ be the superalgebra which is identical with $A^s$ as
ring but with the action of $K$ twisted by $j$. Let $T:=A \wotimes
\bar A$. We remark that $T$ is an even CSS over $K$. We consider the
map $\pi:T \rightarrow T$, defined in the following way: for all
homogeneous elements $p \in A$ and $q \in \bar A$ we have $p\otimes
q \mapsto (-1)^{pq} q \otimes p$. We now define
$$\mathrm{cor}_{K/F}(A)=\{ x \in T \mid \pi(x)=x\}.$$ Since
$T=\mathrm{cor}_{K/F}(A)+\theta \mathrm{cor}_{K/F}(A)\simeq
\mathrm{cor}_{K/F}(A)\otimes_F K$, we conclude that
$\mathrm{cor}_{K/F}(A)$ is an even CSS over $F$. Of course, if
$A=A_0$, the corestriction $\mathrm{cor}_{K/F}(A)$ coincides with
the usual ungraded corestriction (see \cite[p. 308]{scharl}).
\begin{example}\label{ex:corquad}
The corestriction of the quadratic graded algebra $A:=K\langle
\sqrt{\mu} \rangle$  (recall that $K\langle \sqrt{\mu} \rangle=K
\oplus Ku$ with the relation $u^2=\mu$ and the grading $\delta u=1$)
is the linear hull of the set $\{ 1 \wotimes 1, \theta u \wotimes u,
u\wotimes 1+ 1 \wotimes u, \theta u \wotimes 1 - 1 \wotimes \theta u
\}$. Since $\mathrm{cor}_{K/F}(A)\otimes_F K \simeq A \wotimes_K
\bar A$, which is a graded quaternion algebra over $K$, then
$\mathrm{cor}_{K/F}(A)$ is a graded quaternion algebra over $F$.
Here, $\mathrm{cor}_{K/F}(A)_0$ is the linear hull of $\{ 1 \wotimes
1, \theta u \wotimes u \}$. Now $(\theta u \wotimes u)^2=-(\theta
u)^2 \wotimes u^2=-\theta^2\mu \wotimes \mu=-\theta^2\mu \bar \mu (1
\wotimes 1)=\mathrm{N}_{K/F}(\theta \mu)(1 \wotimes 1)$. Hence
$\mathrm{cor}_{K/F}(A)_0\simeq F[\sqrt{\mathrm{N}_{K/F}(\theta
\mu)}]$
\end{example}
\noindent The next step is the definition of a (right)
$\mathrm{cor}_{K/F}(A)$-module structure on $A$ when $A$ possesses a
$K/F$-superantiautomorphism.\\
In particular, if $\xi$ is a $K/F$-superantiautomorphism, then we
define a right action of $T$ on $A$:
\[
\Xi: T \rightarrow \End_K(A),
\]
given by
\[
x\cdot(p\otimes q)=x(p\otimes q)^\Xi:=(-1)^{px}\xi(p)xq
\]
for all homogeneous $p\in A$,
$q\in \bar A$ and $x \in A$. The map $\Xi$
is an isomorphism (because $T$ is a CSS and by dimension count).\\
We will often use the equivalences \quad $A$ has a
$K/F$-superantiautomorphism  $\Leftrightarrow$
the $K$-CSSs $\bar A$ and $A^s$ are isomorphic $\Leftrightarrow$
$T \simeq \End_K(A) \sim 1$ in $BW(F)$.\\
Now $A$ is an irreducible (right) $T$-module. Since
$$\mathrm{cor}_{K/F}(A) \hookrightarrow T \simeq \End_K(A) \hookrightarrow \End_F(A),$$
we have a (right) $\mathrm{cor}_{K/F}(A)$-module structure on $A$.

\begin{lemma}\label{le:corquat}
Let $A$ be a CSS over $K$ with a $K/F$-superantiautomorphism $\xi$.
Let $C:=\End_{\mathrm{cor}_{K/F}(A)}(A)$. Then
$\mathrm{cor}_{K/F}(A) \sim C$ in $BW(F)$ and $\dim_F C=4$, $K
\subseteq C_0$. Moreover, $\mathrm{cor}_{K/F}(A) \sim 1$ if and only
if $A$ is not irreducible as $\mathrm{cor}_{K/F}(A)$-module.
\end{lemma}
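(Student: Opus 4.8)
The plan is to read everything off the two isomorphisms already in hand, $T\simeq\End_K(A)$ and $T\simeq\mathrm{cor}_{K/F}(A)\otimes_F K$, together with the (graded) Wedderburn structure of the CSS $\mathrm{cor}_{K/F}(A)$, a single dimension count, and Schur's Lemma. Write $B:=\mathrm{cor}_{K/F}(A)$ and $n:=\dim_K A$, so $\dim_F A=2n$. From $T\simeq\End_K(A)\simeq M_n(K)$ one gets $\dim_K T=n^2$, hence $\dim_F T=2n^2$; and from $T\simeq B\otimes_F K$ one gets $\dim_F T=2\dim_F B$. Therefore $\dim_F B=n^2$, and $\dim_F A=2n$; these two numbers are the only input from the outside world.

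First I would dispatch $K\subseteq C_0$. The right action of $T$ on $A$ given by $x\cdot(p\otimes q)=(-1)^{px}\xi(p)xq$ is $K$-linear in the variable $x$, because $K$ lies in the (even, ordinary) centre of $A$; hence so is the restricted action of $B\subseteq T$. Thus left multiplication by a scalar of $K$ on $A$ commutes with the $B$-action and defines an embedding $K\hookrightarrow C=\End_B(A)$, which lands in $C_0$ since scalar multiplications are even.

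The main point is the structure of $C$. Since $B$ is a CSS it is graded-simple with a unique graded-irreducible module $W$; put $\Delta:=\End_B(W)$, a finite-dimensional graded division superalgebra by Schur's Lemma, and $d:=\dim_F\Delta$. By the graded double-centralizer statement recorded just before Corollary \ref{co:simplesuperinvolution}, $B\cong\End_\Delta(W)$, so $[B]=[\Delta]$ in $BW(F)$ (the remark following that Corollary) and $\dim_F B=(\dim_F W)^2/d$. As $B$ is graded-simple and $A\ne 0$, the $B$-module $A$ is faithful; being finite-dimensional, $A\cong W^{\oplus k}$ for some integer $k\ge 1$ (the $k$ copies possibly carrying distinct parities, which changes none of the counts below). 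Then $C=\End_B(A)\cong\End_B(W^{\oplus k})$ is a CSS with $[C]=[\Delta]=[B]$ in $BW(F)$ and $\dim_F C=k^2 d$. Consequently $\dim_F B\cdot\dim_F C=(\dim_F A)^2$; substituting $\dim_F A=2n$ and $\dim_F B=n^2$ forces $\dim_F C=4$. Since $[B]=[C]$ is precisely $\mathrm{cor}_{K/F}(A)\sim C$, the first assertion of the Lemma is proved.

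Finally, the ``moreover'' drops out of $k^2 d=\dim_F C=4$ with $k,d$ positive integers: the only possibilities are $(k,d)=(1,4)$ and $(k,d)=(2,1)$. If $k=1$ then $A\cong W$ is irreducible over $B$ while $\dim_F\Delta=4$, so $\Delta$ is a nontrivial division superalgebra and $[\,\mathrm{cor}_{K/F}(A)\,]=[\Delta]\ne 1$ in $BW(F)$. If $k=2$ then $A$ is not irreducible over $B$ while $\dim_F\Delta=1$, so $\Delta=F$ and $[\,\mathrm{cor}_{K/F}(A)\,]=[\Delta]=1$. Hence $\mathrm{cor}_{K/F}(A)\sim 1$ if and only if $A$ is not irreducible as a $\mathrm{cor}_{K/F}(A)$-module. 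The step I expect to require the most care is the graded bookkeeping behind the claim that $\End_B(W^{\oplus k})$ is a CSS of $F$-dimension $k^2 d$ with Brauer--Wall class that of $\Delta$: one must track the possible parity shifts among the $k$ copies of $W$ and the left/right conventions for $\End_\Delta(W)$. This is routine from the graded Wedderburn structure once one observes that every matrix algebra $M_k(F)$, graded by an arbitrary parity function on $\{1,\dots,k\}$, is trivial in $BW(F)$.
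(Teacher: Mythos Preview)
Your argument is correct and takes a genuinely different route from the paper's. The paper proceeds by a direct case split on whether $A$ is $\mathrm{cor}_{K/F}(A)$-irreducible: in the reducible case it picks an irreducible submodule $V$, observes that $KV=V\oplus\theta V$ is a $T$-submodule of $A$ and hence all of $A$, and reads off $\mathrm{cor}_{K/F}(A)\simeq\End_F(V)$ and $C\simeq M_2(F)$ from this concrete description; in the irreducible case it invokes Schur plus Jacobson density to get $\mathrm{cor}_{K/F}(A)=\End_C(A)$ and then does the dimension count. Your approach is more uniform: you feed the graded Wedderburn structure of $B$ and the identity $\dim_F B\cdot\dim_F C=(\dim_F A)^2$ into a single calculation, obtaining $\dim_F C=4$ and $[B]=[C]$ simultaneously, and only afterwards split into the cases $(k,d)=(1,4)$ and $(2,1)$. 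The payoff of your method is that it makes the Brauer--Wall equivalence $\mathrm{cor}_{K/F}(A)\sim C$ transparent (both are $\sim\Delta$) and never touches the $K$-structure of $A$ beyond the opening dimension count; the paper's argument, by contrast, extracts the reducible case from the explicit interaction between the $F$-structure $\mathrm{cor}_{K/F}(A)$ and the $K$-structure $T$, which is more hands-on but perhaps more illuminating about why the number $4$ appears. Your self-identified caution about parity shifts in $W^{\oplus k}$ is well placed but routine, exactly as you say: the resulting $M_k(\Delta)$ carries a grading of the form treated in Theorem~\ref{structureeven}(2), and any such grading leaves the Brauer--Wall class unchanged.
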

\begin{proof}
The fact that $K \subseteq C_0$ follows immediately from
$\mathrm{cor}_{K/F}(A) \subseteq T \simeq \End_K(A)$.\\
First, we suppose that there exists a nontrivial proper irreducible
$\mathrm{cor}_{K/F}(A)$-submodule of $A$. Let $V\neq 0$ be an
irreducible $\mathrm{cor}_{K/F}(A)$-submodule of $A$ with $V\neq A$.
Then $KV:=V\oplus \theta V$ is a $K \otimes_F
\mathrm{cor}_{K/F}(A)\simeq T$-submodule, hence $KV=A$. By dimension
count, $\mathrm{cor}_{K/F}(A)\simeq \End_F(V)$. Therefore $A\simeq
V\oplus V$ (as $\mathrm{cor}_{K/F}(A)$-module). Hence we conclude
that $\End_{\mathrm{cor}_{K/F}(A)}(A)$ is a trivially graded split
quaternion algebra. This implies that $\mathrm{cor}_{K/F}(A) \sim 1$
in $BW(F)$. Now, let $A$ be an irreducible
$\mathrm{cor}_{K/F}(A)$-module. Then, by Schur's Lemma, $C$ is a CDS
and by Jacobson density we have that
$\mathrm{cor}_{K/F}(A)=\End_C(A)$. Since $(\dim_K A)^2=\dim_F
\mathrm{cor}_{K/F}(A)=\dim_F \End_C(A)=(\dim_C A)^2 \dim_F C$, we
have that $\dim_F C=(\frac{\dim_K A}{\dim_C A})^2=(\dim_K C)^2$.
This equation and the observation that $\dim_F C=(\dim_K C) (\dim_F
K)=2\dim_K C$ imply that $\dim_K C=2$ and hence $\dim_F C=4$. We
remark that if $C_1\neq 0$, then $K=C_0$.
\end{proof}

\begin{lemma}\label{le:xisquare}
Let $A$ be a CSS over $K$ with a $K/F$-superantiautomorphism $\xi$.
Then one and only one of the following cases occurs:
\begin{romanenumerate}
\item
Either $\xi^2=\iota_b$, with $b \in A_0^\times$. In this case
$\mathrm{cor}_{K/F}(A) \sim Q$ in $BW(F)$, where $Q=Q_0=K\oplus Ku$
is a quaternion algebra with $u^2=\xi(b)b\in F^\times$ and
$u\alpha=\bar\alpha u$ for any $\alpha\in K$,
\item
or $\xi^2=\iota_b$, with $b \in A_1^\times$. In this case
$\mathrm{cor}_{K/F}(A) \sim H$ in $BW(F)$, with $H$ a quaternion CDS
and $H_0=K$.
\end{romanenumerate}
\end{lemma}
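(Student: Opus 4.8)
The plan is to exploit the right $T$-module structure on $A$ together with the isomorphism $\Xi : T \xrightarrow{\sim} \End_K(A)$ coming from the $K/F$-superantiautomorphism $\xi$, and to translate the element $b$ with $\xi^2 = \iota_b$ into an element of the centralizer $C = \End_{\mathrm{cor}_{K/F}(A)}(A)$. Recall first that $\xi^2$ is a graded $K$-linear automorphism of $A$ (the square of a semilinear map is linear), so by the graded Skolem--Noether theorem (Proposition on $\mathrm{Autg}(A)$) there is a homogeneous invertible $b$, either in $A_0^\times$ or in $A_1^\times$, with $\xi^2 = \iota_b$; this is the dichotomy (i)/(ii). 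As in Lemma \ref{invariant}, from $\xi^3 = \xi\circ\xi^2 = \xi^2\circ\xi$ one gets that $b\,\xi(b) = \xi(b)\,b$ is a scalar; since $\xi$ is only $K/F$-semilinear, this scalar lies in $K^\times$, and applying $\xi$ to it shows it is fixed by $j$, hence $b\,\xi(b)\in F^\times$. Call this scalar $\beta$.

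Next I would produce an explicit element $\psi \in C = \End_{\mathrm{cor}_{K/F}(A)}(A)$ out of $\xi$. Working in $\End_F(A)$, consider the $F$-linear map $\psi : A \to A$ given by $x \mapsto \xi(x)\,b^{-1}$ (possibly up to a sign $(-1)^{bx}$ so that it respects the grading correctly — I will fix the exact cocycle factor so that $\psi$ commutes with the $\Xi$-action of $T$ restricted to $\mathrm{cor}_{K/F}(A)$; the point is that $\xi$ intertwines the $T$-action with the $\pi$-twisted action, and on the $\pi$-fixed subalgebra $\mathrm{cor}_{K/F}(A)$ these two actions agree). A direct check using $x\cdot(p\otimes q) = (-1)^{px}\xi(p)xq$ and $\xi^2 = \iota_b$ shows $\psi$ lies in $C$; and $\psi$ is $K$-semilinear since $\xi$ is, so $\psi \notin C_0$ — combined with $K \subseteq C_0$ and $\dim_F C = 4$ from Lemma \ref{le:corquat}, this forces $C = K \oplus K\psi$ with $\psi\alpha = \bar\alpha\psi$ for $\alpha \in K$. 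Finally one computes $\psi^2$: $\psi^2(x) = \xi(\xi(x)b^{-1})b^{-1} = (-1)^{?}\,\xi(b^{-1})\,\xi^2(x)\,b^{-1} = (-1)^{?}\,\xi(b)^{-1}b^{-1}\,x\,b\,b^{-1} \cdot(\text{scalar})$, which collapses (because $b\xi(b)=\beta\in F^\times$ is central) to multiplication by $\pm\beta^{-1}$ — i.e. $\psi^2 \in F^\times$, and after rescaling $\psi$ we may take $\psi^2 = \beta = \xi(b)b$ in case (i). The grading on $C$ is inherited from that of $\psi$: in case (i), $b$ is even, so $\psi$ is even (once the sign factor is chosen consistently), giving $C = Q = Q_0 = K\oplus Ku$ with $u^2 = \xi(b)b \in F^\times$ and $u\alpha = \bar\alpha u$; in case (ii), $b$ is odd, so $\psi$ is odd, $C_1 \neq 0$, hence $C_0 = K$ by the last sentence of Lemma \ref{le:corquat}, and $C = H$ is a quaternion CDS with $H_0 = K$. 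Since $\mathrm{cor}_{K/F}(A) \sim C$ in $BW(F)$ by Lemma \ref{le:corquat}, this identifies $\mathrm{cor}_{K/F}(A)$ with $Q$, resp. $H$, in the two cases.

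The steps where I expect the real work is the bookkeeping of sign/parity factors: one must pin down the exact homogeneous normalization of $\psi$ (the $(-1)$-exponents) so that (a) $\psi$ genuinely centralizes $\mathrm{cor}_{K/F}(A)$ under $\Xi$, and (b) $\psi$ is homogeneous of the same parity as $b$, and (c) $\psi^2$ comes out to exactly $\xi(b)b$ rather than $-\xi(b)b$. None of this is deep, but it is where a superalgebra argument can go wrong. Everything else — semilinearity forcing $\psi\notin C_0$, the dimension count identifying $C$ as a quaternion algebra, and the transfer to the Brauer--Wall group — is immediate from Lemma \ref{le:corquat}. One should also note that cases (i) and (ii) are exhaustive and mutually exclusive simply because $b$ is homogeneous and invertible, hence lies in exactly one of $A_0^\times$, $A_1^\times$ (and $A_1^\times \neq \emptyset$ is not assumed — if $A_1 = 0$ then only case (i) can occur, consistent with the statement, which does not claim case (ii) ever happens).
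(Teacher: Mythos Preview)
Your approach is essentially identical to the paper's: the paper defines $f(x)=(-1)^{xb}\xi(x)b$, verifies $f\in C$ by exactly the direct computation on $p\otimes q+(-1)^{pq}q\otimes p$ that you sketch, and obtains $f^2=l_{\xi(b)b}$ on the nose (no rescaling needed). One slip to fix: semilinearity of $\psi$ gives $\psi\notin K$, not $\psi\notin C_0$ (indeed you yourself say $\psi$ is even when $b$ is even, so $\psi\in C_0$ in case (i)); but $\psi\notin K$ is precisely what you need, together with $\dim_F C=4$, to conclude $C=K\oplus K\psi$.
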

\begin{proof}
The dichotomy is given by the observation that $\xi^2$ is a graded
automorphism and the Skolem-Noether Theorem. Recall from Lemma
\ref{le:corquat} that $A$ is a right module for
$\mathrm{cor}_{K/F}(A)$, and thus the action of the centralizer
$C:=\End_{\mathrm{cor}_{K/F}(A)}(A)$ will be written on the left.
Also, $K$ is a subalgebra of $C_0$, and the action (by scalar
multiplication) of any element $\alpha\in K$ will be denoted by
$l_\alpha$: $l_\alpha(a)=\alpha a$ for any $a\in A$. To prove the
lemma it is sufficient to compute $C$  (see Lemma \ref{le:corquat}).
We already know that $C$ is either a quaternion CDS or a quaternion
algebra with trivial grading.

\noindent Let $\xi^2=\iota_b$, with $b\in A_0^\times\sqcup
A_1^\times$. Note first that for any homogeneous $x\in A$,
$\xi^3(x)=\xi(\xi^2(x))=\xi\bigl((-1)^{bx}bxb^{-1}\bigr)
=(-1)^{bx}\xi(b)^{-1}\xi(x)\xi(b)$, but also
$\xi^3(x)=\xi^2(\xi(x))=(-1)^{bx}b\xi(x)b^{-1}$. Thus $\xi(b)b$ is
an even element in $Z(A)=K$, and it is fixed by $\xi$, so
$\xi(b)b\in F^\times$.

\noindent Consider the $F$-linear map $f:A\rightarrow A$ given by
$f(x)=(-1)^{xb}\xi(x)b$ for any homogeneous $x\in A$. The map $f$ is
even or odd, according to $b$ being even or odd. For any $\alpha\in
K$ and $x\in A$,
\[
f\circ l_\alpha(x)=f(\alpha(x))=\bar\alpha f(x)=l_{\bar\alpha}\circ
f(x),
\]
and
\[
f^2(x)=\xi\bigl(\xi(x)b\bigr)b=(-1)^{bx}\xi(b)\xi^2(x)b
=\xi(b)bxb^{-1}b=\xi(b)bx,
\]
so $f^2=l_{\xi(b)b}$ and the algebra over $F$ generated by $K$ and
$f$ (a subalgebra of $\End_F(A)$) is $K\oplus Kf$, which is a
quaternion (ungraded) algebra if $b$ is even, or a quaternion
superalgebra with even part $K$ if $b$ is odd. It remains to be
shown that $f$ lies in the centralizer $C$. For any homogeneous
elements $x,p,q\in A$,
\[
\begin{split}
f\bigl(x&\cdot(p\otimes q+(-1)^{pq}q\otimes p)\bigr)\\
  &=
  f\bigl((-1)^{px}\xi(p)xq +(-1)^{(p+x)q}\xi(q)xp\bigr)\\
  &=(-1)^{(p+x+q)b}\Bigl((-1)^{px}\xi\bigl(\xi(p)xq\bigr)b
     +(-1)^{(p+x)q}\xi\bigl(\xi(q)xp\bigr)b\Bigr)\\
  &=(-1)^{(p+x+q)b}\Bigl((-1)^{(p+x)q}\xi(q)\xi(x)\xi^2(p)b
      +(-1)^{px}\xi(p)\xi(x)\xi^2(q)b\Bigr)\\
  &=(-1)^{(p+x+q)b}\Bigl((-1)^{(p+x)q}(-1)^{pb}\xi(q)\xi(x)bp
      +(-1)^{px}(-1)^{qb}\xi(p)\xi(x)bq\Bigr)\\
  &=(-1)^{p(x+b)}\xi(p)\bigl((-1)^{xb}\xi(x)b\bigr)q +
       (-1)^{q(p+x+b)}\xi(q)\bigl((-1)^{xb}\xi(x)b\bigr)p\\
  &=f(x)\cdot (p\otimes q+(-1)^{pq}q\otimes p).
\end{split}
\]
Since the elements $p\otimes q+(-1)^{pq}q\otimes p$ span
$\mathrm{cor}_{K/F}(A)$, we conclude that $f$ is in the centralizer
$C$, as required.
\end{proof}

\begin{theorem}\label{th:gradedalbert}\textbf{\emph{(Graded Albert-Riehm Theorem)}}\quad
Let $K/F$ be a quadratic field extension. Let $A$ be a CSS over $K$.
Then $A$ has a $K/F$-superinvolution if and only if
$\mathrm{cor}_{K/F}(A) \sim 1$ in $BW(F)$.
\end{theorem}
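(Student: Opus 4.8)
The plan is to prove both implications, mimicking the classical Albert--Riehm argument but keeping careful track of parities throughout. For the forward direction, suppose $\tau$ is a $K/F$-superinvolution of $A$. Then in particular $\tau$ is a $K/F$-superantiautomorphism with $\tau^2=\mathrm{id}=\iota_1$, so we are in case (i) of Lemma \ref{le:xisquare} with $b=1\in A_0^\times$ and $\tau(b)b=1\in F^{\times 2}$. The conclusion of that lemma then gives $\mathrm{cor}_{K/F}(A)\sim Q$ in $BW(F)$, where $Q=K\oplus Ku$ with $u^2=1$ and $u\alpha=\bar\alpha u$ for $\alpha\in K$. But $Q$ with these relations is the split quaternion algebra (it contains the idempotent $\tfrac12(1+u)$, or simply $u^2-1=0$ forces zero divisors), hence $Q\sim 1$ in $BW(F)$, so $\mathrm{cor}_{K/F}(A)\sim 1$.

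For the converse, assume $\mathrm{cor}_{K/F}(A)\sim 1$ in $BW(F)$. First, $\mathrm{cor}_{K/F}(A)\sim 1$ forces (via the usual corestriction/Brauer-Wall computation, or directly from $T\simeq\mathrm{cor}_{K/F}(A)\otimes_F K$) that $\bar A\simeq A^s$ as $K$-superalgebras, i.e.\ $A$ admits a $K/F$-superantiautomorphism $\xi$; this is exactly the equivalence displayed just before Lemma \ref{le:corquat}. Now apply Lemma \ref{le:corquat}: since $\mathrm{cor}_{K/F}(A)\sim 1$, $A$ is \emph{not} irreducible as a $\mathrm{cor}_{K/F}(A)$-module, and $C:=\End_{\mathrm{cor}_{K/F}(A)}(A)$ is a split quaternion algebra with trivial grading (the argument in the proof of Lemma \ref{le:corquat} shows $A\simeq V\oplus V$ and $\mathrm{cor}_{K/F}(A)\simeq\End_F(V)$, whence $C\simeq M_2(F)$ as an \emph{ungraded} algebra, i.e.\ $C_1=0$). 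Thus in the dichotomy of Lemma \ref{le:xisquare} we must be in case (i): $\xi^2=\iota_b$ with $b\in A_0^\times$ and $\mathrm{cor}_{K/F}(A)\sim Q$, $Q=Q_0=K\oplus Ku$, $u^2=\xi(b)b\in F^\times$. Since $\mathrm{cor}_{K/F}(A)\sim 1$ we get $Q\sim 1$, so $Q$ is split, which happens precisely when $u^2=\xi(b)b\in\mathrm{N}_{K/F}(K^\times)$; write $\xi(b)b=\mathrm{N}_{K/F}(\mu)=\mu\bar\mu$ for some $\mu\in K^\times$.

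The final step is to correct $\xi$ to a genuine superinvolution, exactly as in the classical Albert--Riehm proof but with parities trivial here because $b$ is even. Consider $\xi':=\iota_c\circ\xi$ for a suitable $c\in A_0^\times$; one computes
\[
(\xi')^2=\iota_c\circ\xi\circ\iota_c\circ\xi=\iota_c\circ\iota_{\xi(c)}\circ\xi^2=\iota_{c\,\xi(c)\,b}.
\]
We want $c\,\xi(c)\,b\in F^\times$, and then a scalar adjustment will make $(\xi')^2=\mathrm{id}$. The idea (Albert's trick) is to look for $c$ of the form $c=\mu^{-1}+ \lambda b$ or, more robustly, to use that $b$ already satisfies $\xi(b)=\mu\bar\mu b^{-1}$ and hence $\xi(b)$ and $b$ generate a commutative-enough situation; setting $c=1+\mu^{-1}b$ (nonzero since $b\notin F^\times\mu$ unless $\xi^2=\mathrm{id}$ already, in which case we are done) gives after a short computation $c\,\xi(c)\,b=\mu^{-1}(b+\mu)(\,\overline{\mu^{-1}}\,\xi(b)+1)b$, and using $\xi(b)b=\mu\bar\mu$ this simplifies to a scalar in $F^\times$. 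Rescaling $\xi'$ by an element of $K$ to kill that scalar (possible since $\mathrm{N}_{K/F}$ is surjective onto the relevant square class, or more simply since the scalar turns out to be a norm) yields a $K/F$-superantiautomorphism whose square is the identity, i.e.\ a $K/F$-superinvolution. The main obstacle is this last normalization: one must verify that the scalar $c\,\xi(c)\,b\in F^\times$ produced by the Albert trick is actually a norm from $K$ (so that it can be absorbed by a semilinear rescaling), and that the chosen $c$ is genuinely invertible in $A_0$; both are handled by the same computation that identified $\mathrm{cor}_{K/F}(A)$ with the split $Q$, so no new input beyond Lemmas \ref{le:corquat} and \ref{le:xisquare} is needed.
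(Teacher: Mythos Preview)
Your forward direction and the reduction to case (i) of Lemma \ref{le:xisquare} are fine and match the paper. The difficulties are all in the ``Albert trick'' paragraph.

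First, a computational slip: for an antiautomorphism $\xi$ and an even $c$ one has $\xi\circ\iota_c=\iota_{\xi(c)^{-1}}\circ\xi$, not $\iota_{\xi(c)}\circ\xi$, since $\xi(cxc^{-1})=\xi(c)^{-1}\xi(x)\xi(c)$. Hence $(\iota_c\circ\xi)^2=\iota_{c\,\xi(c)^{-1}\,b}$, not $\iota_{c\,\xi(c)\,b}$. With your choice $c=1+\mu^{-1}b$ one actually gets $c\,\xi(c)\,b=\mu^{-1}(\mu+b)^2=\mu c^2$, which is not a scalar, so even the computation you sketch does not close. The clean way is to first rescale $b$ by $\mu^{-1}$ so that $\xi(b)b=1$, i.e.\ $\xi(b)=b^{-1}$, and then take $c=(1+b)^{-1}$; with the correct formula one finds $c\,\xi(c)^{-1}\,b=(1+b)^{-1}(1+b^{-1})b=1$, so $\eta=\iota_{(1+b)^{-1}}\circ\xi$ is a genuine superinvolution.

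Second, and more seriously, your argument needs $c$ (equivalently $1+b$) to be \emph{invertible}, and you only argue it is nonzero. In a CSS this is not enough. The paper avoids this by first using the structure theory (Theorem \ref{structureeven} and Remark \ref{superdiv}) to write $A$ as $M_{n+m}(K)\wotimes(D)$ or $(M_n(K))\wotimes\Delta$, observe that the matrix factors carry $K/F$-superinvolutions and hence have trivial corestriction, and thereby reduce to the case of a central division algebra $D$ (classical Albert--Riehm) or a CDS $\Delta$. In a CDS every nonzero even element is invertible, so $b\neq -1$ already gives $1+b\in\Delta_0^\times$ and the trick goes through. Your claim that invertibility is ``handled by the same computation that identified $\mathrm{cor}_{K/F}(A)$ with the split $Q$'' is not substantiated; that computation is about the centralizer $C$ and says nothing about whether a specific element of $A_0$ is a unit. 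Without the reduction to the division case (or some substitute argument guaranteeing a unit of the required shape), the last step does not go through.
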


\begin{proof}
First, we assume that $\xi$ is a $K/F$-superinvolution on $A$.
Clearly, $\xi^2=\iota_1$. By Lemma \ref{le:xisquare},
$\mathrm{cor}_{K/F}(A) \sim Q$, with $Q=K\oplus Ku$,
$u^2=\xi(1)1=1$, $Q=Q_0$. But $Q$ is split,
hence $\mathrm{cor}_{K/F}(A) \sim 1$ in $BW(F)$.\\
To prove the second implication, we suppose that
$\mathrm{cor}_{K/F}(A) \sim 1$ in $BW(F)$. Then
$K\otimes \mathrm{cor}_{K/F}(A) \simeq T \sim 1$ in
$BW(K)$. Hence $\bar A \simeq A^s$.\\
We know (see Theorem \ref{structureeven} and Remark \ref{superdiv})
that either $A$ is even and there is a central division $K$-algebra
$D$ such that $A\simeq M_{n+m}(K) \wotimes (D)$ (case I) or there
exists a $K$-CDS $\Delta$
(even if $A$ is even, odd otherwise) such that
$A\simeq (M_n(K))\wotimes \Delta$ (case II).\\
We remark that the CSSs $M_{n+m}(K)$ and $(M_n(K))$ have a
$K/F$-superinvolution. Hence, by the first implication,
they have trivial corestriction. \\
We consider case I. If $A\simeq M_{n+m}(K) \wotimes (D)$, then (see
\cite[p.~310]{scharl})
$$
\mathrm{cor}_{K/F}(A)\simeq \mathrm{cor}_{K/F}(M_{n+m}(K)) \wotimes \mathrm{cor}_{K/F}((D)).
$$
Hence in case I we have $\mathrm{cor}_{K/F}((D)) \sim 1$ in $BW(F)$
and since the grading of $D$ is trivial
the classical Albert-Riehm Theorem implies that $D$ has an
involution of the second kind.\\
To conclude the proof, we have to study case II. As in case one, we
obtain that $\mathrm{cor}_{K/F}(\Delta) \sim 1$ in $BW(F)$. Let
$\xi$ be a $K/F$-superantiautomorphism of $\Delta$ (it exists
because if $\mathrm{cor}_{K/F}(\Delta) \sim 1$ then $\bar \Delta
\simeq \Delta^s$). Since $\mathrm{cor}_{K/F}(\Delta) \sim 1$, Lemma
\ref{le:xisquare} forces $\xi^2=\iota_b$, with $b \in A_0^{\times}$.
Moreover, we may suppose that $\xi(b)b=1$ (because the fact that
$\mathrm{cor}_{K/F}(\Delta) \sim Q \sim 1$ implies that there is an
element $\lambda \in K$ such that $\lambda \bar \lambda= \xi(b)b$
and we may change $b$ with $\lambda^{-1}b$). If $b=-1$, we have
finished the proof. If $b \neq -1$, then the map
$\eta=\iota_{(1+b)^{-1}}\circ \xi$ is a superinvolution.
\end{proof}

\begin{example}
We consider the quadratic graded algebra $A:=K\langle \sqrt{\mu}
\rangle$ (with $K\langle \sqrt{\mu} \rangle=K \oplus Ku$ with the
relation $u^2=\mu$ and the grading $\delta u=1$). We have computed
the corestriction in Example \ref{ex:corquad}: we know that
$\mathrm{cor}_{K/F}(A) \sim 1$ in $BW(F)$ if and only if
$\mathrm{N}_{K/F}(\theta \mu)$ is a square in $F$. This condition is
equivalent with the existence of an odd element $v$ such that
$v^2\in F^{\times} \theta$ (or equivalently, to the existence of an element
$\alpha \in K^{\times}$ such that $\alpha^2\mu \in F^{\times}\theta$). In fact, if
$\alpha^2\mu=\gamma \theta$, $\gamma \in F^{\times}$, then
$\mathrm{N}_{K/F}(\theta \mu)=\mathrm{N}_{K/F}(\alpha^{-2} \gamma
\theta^{2})=\mathrm{N}_{K/F}(\alpha^{-1})^2(\gamma \theta^{2})^2\in
F^{\times 2}$. On the other hand, if $\mathrm{N}_{K/F}(\theta
\mu)=\gamma^2$,
 $\gamma \in F^{\times}$ we have $\mathrm{N}_{K/F}(\gamma^{-1}\theta\mu)=1$
 and by Hilbert 90 there is an element $\delta \in K^{\times}$ with
 $\gamma^{-1}\theta\mu=\delta \bar\delta^{-1}$. Hence
 $\bar\delta^2\mu=(\gamma\theta^{-2}\delta\bar\delta)\theta\in F^{\times}\theta$.
\end{example}

\begin{remark}
The preceding result can also be proved directly.
\begin{proposition}\label{oddsecond}
The CSS $A=K\langle \sqrt{\mu} \rangle$ has a $K/F$-superinvolution
if and only if there is an element $v \in A_1$ such that $v^2\in F^{\times}
\theta$.
\end{proposition}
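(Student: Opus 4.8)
The plan is to write down every $K/F$-superantiautomorphism of $A=K\langle\sqrt\mu\rangle=K\oplus Ku$ and then pick out the superinvolutions among them. First I would observe that a $K/F$-superantiautomorphism $\xi$ is $K/F$-semilinear and graded, and (being bijective, since $A$ is a finite dimensional CSS) satisfies $\xi(1)=1$, so $\xi|_{A_0}=j$ where $A_0=K$. Since $A_1=Ku$ is one dimensional over $K$ and graded, $\xi(u)=\beta u$ for a unique $\beta\in K^\times$, and because $A$ is commutative the only instance of the superantiautomorphism rule that says anything new is $\xi(u\cdot u)=-\xi(u)\xi(u)$, i.e. $\bar\mu=\xi(\mu)=-\beta^2\mu$. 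Thus the constraint on $\beta$ is exactly $\beta^2=-\bar\mu\mu^{-1}$, and conversely, for any $\beta\in K^\times$ with $\beta^2=-\bar\mu\mu^{-1}$ the formula $\xi(\lambda+\lambda'u)=\bar\lambda+\overline{\lambda'}\,\beta u$ is routinely checked (using commutativity of $A$) to define a $K/F$-superantiautomorphism.

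Next I would compute $\xi^2$. On $A_0$ it is the identity for free, while $\xi^2(u)=\xi(\beta u)=\bar\beta\beta u=\mathrm{N}_{K/F}(\beta)\,u$. Hence $\xi$ is a $K/F$-superinvolution if and only if $\beta$ satisfies both $\beta^2=-\bar\mu\mu^{-1}$ and $\mathrm{N}_{K/F}(\beta)=1$, and the proposition reduces to the statement that such a $\beta\in K^\times$ exists if and only if there is $\gamma\in K^\times$ with $\gamma^2\mu\in F^\times\theta$, i.e. $v=\gamma u\in A_1$ with $v^2=\gamma^2\mu\in F^\times\theta$.

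For this last equivalence I would use that $F\theta$ is exactly the $(-1)$-eigenspace of $j$ on $K$ (as $\bar\theta=-\theta$), together with Hilbert 90. If $\gamma\in K^\times$ has $\gamma^2\mu=c\theta$ with $c\in F^\times$, applying $j$ gives $\bar\gamma^2\bar\mu=-\gamma^2\mu$, so $-\bar\mu\mu^{-1}=(\gamma\bar\gamma^{-1})^2$; then $\beta:=\gamma\bar\gamma^{-1}$ satisfies $\beta^2=-\bar\mu\mu^{-1}$ and $\mathrm{N}_{K/F}(\beta)=1$. Conversely, given such a $\beta$, the condition $\mathrm{N}_{K/F}(\beta)=1$ lets us write $\beta=\gamma\bar\gamma^{-1}$ for some $\gamma\in K^\times$ by Hilbert 90, and then $\overline{\gamma^2\mu}=\bar\gamma^2\bar\mu=-\bar\gamma^2\beta^2\mu=-\gamma^2\mu$, so $\gamma^2\mu$ is a nonzero element of $F\theta$, i.e. $v=\gamma u$ has $v^2=\gamma^2\mu\in F^\times\theta$.

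I do not expect a genuine obstacle here: the whole argument is a short explicit computation. The only points needing a little attention are checking that the candidate formula for $\xi$ really is a superantiautomorphism (immediate from commutativity of $A$) and not forgetting that the passage between the norm-one condition on $\beta$ and the existence of $\gamma$ goes through Hilbert 90; one could instead track the invariant $\beta\bar\beta\in\{\pm1\}$ attached to $\beta^2=-\bar\mu\mu^{-1}$, but the direct translation above is shorter.
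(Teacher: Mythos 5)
Your proof is correct, but it takes a different route from the paper's. The paper's forward direction is shorter and avoids Hilbert 90: since a $K/F$-superinvolution $*$ is an $F$-linear map of order two preserving $A_1$ (and $\charac F\neq 2$), one can simply pick a nonzero eigenvector $v\in A_1$ with $v^*=\pm v$; then $\overline{v^2}=(v^2)^*=-v^*v^*=-v^2$ forces $v^2\in F^\times\theta$ at once (the paper phrases this as $u^*=\pm u$, tacitly replacing $u$ by such an eigenvector). The converse in the paper is the same explicit construction you end up with: $x+yv\mapsto \bar x+\bar y v$. Your argument instead parametrizes all $K/F$-superantiautomorphisms by $\xi(u)=\beta u$ with $\beta^2=-\bar\mu\mu^{-1}$, identifies the superinvolutions by the extra condition $\mathrm{N}_{K/F}(\beta)=1$, and then translates back and forth to the existence of $v$ via Hilbert 90 — essentially the same mechanism the paper uses in the Example just before this proposition (relating $\mathrm{N}_{K/F}(\theta\mu)\in F^{\times 2}$ to the existence of such a $v$), but not in its direct proof. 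What your approach buys is a complete and explicit description of all $K/F$-superinvolutions on $A$ (they correspond to the norm-one solutions $\beta$), at the cost of invoking Hilbert 90; the paper's eigenvector trick gets the stated equivalence more economically. Both proofs are sound; your checks of the semilinear superantiautomorphism axioms and of $\xi^2(u)=\mathrm{N}_{K/F}(\beta)u$ are accurate, and your handling of the parity sign in $\xi(u^2)=-\xi(u)^2$ is exactly where the content lies.
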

\begin{proof}
Let $*$ be a $K/F$-superinvolution and $K\langle \sqrt{\mu}
\rangle=K \oplus Ku$.  Since the eigenvalues of $*$ are $\pm1,$ we
have $u^*=\pm u$. Then $\mu ^*=(u^2)^*=-u^* u^*=-\mu$. Hence $\mu
\in F^{\times} \theta$. Conversely, if there is an element $v \in A_1$ such
that $v^2\in F^{\times} \theta$, then the map $*:x+yv \mapsto \bar x +\bar
y v$ ($x,y \in K$) defines a $K/F$-superinvolution of $A$.
\end{proof}
\end{remark}
\noindent For CSS's of odd type there is a criterion for the
existence of involutions of the second kind which is easier than the
general one in terms of corestriction.
\begin{proposition}
Let $A$ be a CSS of odd type over $K$. Then $A$ has a
$K/F$-superinvolution if and only if the following two conditions
are satisfied:
\begin{romanenumerate}
\item
The CSS $Z(A)$ has a $K/F$-superinvolution.
\item
The central simple algebra $A_0$ has an involution of the second kind.
\end{romanenumerate}
\end{proposition}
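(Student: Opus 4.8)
The plan is to use the structural decomposition $A\simeq (A_0)\wotimes Z(A)\simeq (A_0)\otimes_K Z(A)$ provided by Theorem \ref{structureodd}(2), bearing in mind that here $Z(A)=K\oplus Kz$ is a copy of a quadratic graded algebra $K\langle\sqrt{a}\rangle$, that $Z(A_0)=K$ (so $A_0$ really is a central simple $K$-algebra), and that the grading on $A_0$ is trivial, so that in this tensor product all crossing signs are $1$ and the degree of $x\otimes y$ is carried entirely by the $Z(A)$-factor.

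For the ``only if'' direction, let $\xi$ be a $K/F$-superinvolution of $A$. Being graded, $\xi$ preserves $A_0$, and $\xi|_{A_0}$ is a $K/F$-semilinear antiautomorphism with square the identity; since it acts as $j\ne\mathrm{id}$ on $Z(A_0)=K$, it is an involution of the second kind, which is (ii). For (i), I would first check that $\xi$ maps the ordinary center $Z(A)=K\oplus Kz$ into itself: $\xi(K)=K$ by semilinearity, and for homogeneous $a$ the identity $za=az$ yields $\xi(a)\xi(z)=\xi(z)\xi(a)$ after cancelling the common sign $(-1)^{\delta z\,\delta a}$, so $\xi(z)$ is central and odd, hence $\xi(z)\in Kz$. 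Then $\xi|_{Z(A)}$ is a $K/F$-superinvolution of the graded subalgebra $Z(A)$, which is (i).

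For the converse, take a $K/F$-superinvolution $s$ of $Z(A)$ and an involution of the second kind $\sigma$ of $A_0$, and set $\varphi=\sigma\otimes s$ on $A\simeq (A_0)\otimes_K Z(A)$. The point requiring a moment's care is that this is well defined over $K$: as $\sigma$ and $s$ are both $j$-semilinear, $\sigma(\lambda x)\otimes s(y)=\bar\lambda\,\sigma(x)\otimes s(y)=\sigma(x)\otimes s(\lambda y)$, so $\varphi$ descends to the $K$-tensor product and is $K/F$-semilinear with $\varphi^2=\sigma^2\otimes s^2=\mathrm{id}$. Because the grading sits entirely in the second factor, the superantiautomorphism identity for $\varphi$ reduces to that of $s$ (the sign $(-1)^{\delta y_1\,\delta y_2}$ is exactly the one produced by $s$), while $\sigma$ handles the $A_0$-factor as an ordinary antiautomorphism; hence $\varphi$ is a $K/F$-superinvolution of $A$.

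The only steps that are not completely immediate are the preservation of the ordinary center $Z(A)$ by an arbitrary superantiautomorphism and the $K$-balancing that makes $\sigma\otimes s$ well defined; both are short sign computations. I do not expect a genuine obstacle: this proposition is simply the ``odd type'' instance of the tensor-decomposition technique already exploited for Theorems \ref{albertgraded} and \ref{th:gradedalbert}, and the concrete content of condition (i) is supplied by Proposition \ref{oddsecond}.
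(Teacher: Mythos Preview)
Your argument is correct and coincides with the paper's ``easier direct proof'': restrict a given $K/F$-superinvolution to $A_0$ and to $Z(A)$ for the forward direction, and tensor $\sigma\otimes s$ for the converse, with the extra care you take (preservation of $Z(A)$, $K$-balancing of the two $j$-semilinear maps) being details the paper leaves implicit. The paper additionally offers a second proof via Theorem~\ref{th:gradedalbert}, showing that $\mathrm{cor}_{K/F}(A)\sim\mathrm{cor}_{K/F}(Z(A))\wotimes\mathrm{cor}_{K/F}(A_0)$ and then arguing with Lemma~\ref{le:xisquare} that the two factors must be trivial separately; this ties the statement to the corestriction machinery but is not needed for the result itself.
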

\begin{proof}
We give two different proofs: one using Theorem
\ref{th:gradedalbert} and a direct proof.
First, we give the proof using Theorem \ref{th:gradedalbert},
which shows how the proposition is related to the general result.\\
If $Z(A)$ has a $K/F$-superinvolution and $A_0$ has an involution of
the second kind, then $\mathrm{cor}_{K/F}(Z(A)) \sim 1$ and
$\mathrm{cor}_{K/F}(A_0) \sim 1$. Hence
$$\mathrm{cor}_{K/F}(A)\simeq \mathrm{cor}_{K/F}(Z(A)) \wotimes
\mathrm{cor}_{K/F}(A_0)\sim 1,$$ as desired.
Conversely, suppose that $\mathrm{cor}_{K/F}(A) \sim 1$.
We know (see Lemma \ref{le:xisquare} and Proposition \ref{oddsecond})
that $\mathrm{cor}_{K/F}(Z(A)) \sim Q$, a quaternion algebra with
$Q=Q_0$, $K\subseteq Q$ and $\mathrm{cor}_{K/F}(A_0) \sim H$,
a graded quaternion algebra. We have only two possibilities:
either $Q\sim 1$ and $H\sim 1$ or $Q$ is a division algebra
and $H\simeq Q^s=Q^{op}$. This last case can not occur, because
$H_1\neq 0=Q_1^{op}$. Hence we conclude that
$\mathrm{cor}_{K/F}(A) \sim 1$ if and only if
$\mathrm{cor}_{K/F}(Z(A)) \sim 1$ and $\mathrm{cor}_{K/F}(A_0) \sim 1$.\\
Now we give an easier direct proof.\\
Clearly, if the $Z(A)$ has a $K/F$-superinvolution $\tau_1$ and  the
central simple algebra $A_0$ has an involution $\tau_2$ of the
second kind, then $\tau_1 \otimes \tau_2$ is a $K/F$-superinvolution
on $A=Z(A)\wotimes A_0$. On the other hand, if $\tau$ is a
$K/F$-superinvolution on $A$, then $\tau\mid_{A_0}$ is an involution
of the second kind on $A_0$ and $\tau\mid_{Z(A)}$ is a
$K/F$-superinvolution on $Z(A)$.
\end{proof}

\begin{example}
The existence of a superinvolution does not imply the existence  of
a superantiautomorphism whose square is the grading automorphism.
For example, consider the quadratic graded algebra $K\langle \sqrt i
\rangle=K \oplus Ku$ with $K=\mathbb{Q} (i)$, $i^2=-1$. Clearly, it
has a superinvolution. But suppose that $\varphi$ is a graded
superantiautomorphism with $\varphi^2=\nu$. Then $\varphi^2(u)=-u$.
On the other hand, there is a $\lambda \in K$ such that
$\varphi(u)=\lambda u$. Hence $-u=\varphi^2(u)=\varphi(\lambda
u)=\bar \lambda \lambda u$. But the equation $\bar \lambda
\lambda=-1$ has no solution in $K$, a contradiction to the existence
of $\varphi$.
\end{example}

\begin{example}
We give an example of even CSS with superinvolution but with  no
superantiautomorphism whose square is the grading automorphism. Let
$F=\mathbb{Q}$, $K=F (i)$, $i^2=-1$ and $A=K\langle
\sqrt{i}\rangle\wotimes K\langle \sqrt{3i}\rangle $. The CSS $A$ has
a superinvolution (and hence $\mathrm{cor}_{K/F}(A)\sim 1$), because
$K\langle \sqrt{i}\rangle$ and $K\langle \sqrt{3i}\rangle$ possess a
superinvolution. Here $A_0=K1\oplus Kz$ with $z^2=3$. Now, suppose
that $\varphi$ is a graded superantiautomorphism with
$\varphi^2=\nu$. Then $\varphi^2=\iota_z$ and $\varphi(z) \in A_0$
and $\varphi(z)^2=\varphi(z^2)=3.$ Moreover, $\varphi(z)=\pm z$. We
know (see Lemma \ref{le:xisquare}) that $\mathrm{cor}_{K/F}(A)\sim
K\oplus Kf$ with $f^2=\varphi(z)z=\pm z^2=\pm 3$. But $\pm 3 \notin
\mathrm{N}_{K/F}(K)$, hence $K\oplus Kf$ is a division algebra, a
contradiction with $\mathrm{cor}_{K/F}(A)\sim 1$.
\end{example}

\end{document}